\definecolor{linkred}{RGB}{0,191,255} 
\definecolor{linkblue}{RGB}{16, 78, 139}
\titlespacing{\section}{0pt}{12pt}{0pt}
\titlespacing{\subsection}{0pt}{6pt}{0pt}
\long\def\@footnotetext#1{%
	\H@@footnotetext{%
		\ifHy@nesting 
		\hyper@@anchor{\@currentHref}{#1}%
		\else 
		\Hy@raisedlink{\hyper@@anchor{\@currentHref}{\relax}}#1%
		\fi 
}}
\def\@footnotemark{%
	\leavevmode 
	\ifhmode\edef\@x@sf{\the\spacefactor}\nobreak\fi 
	\H@refstepcounter{Hfootnote}%
	\hyper@makecurrent{Hfootnote}%
	\hyper@linkstart{link}{\@currentHref}%
	\@makefnmark 
	\hyper@linkend 
	\ifhmode\spacefactor\@x@sf\fi 
	\relax 
}%
\theoremstyle{plain}
\newtheorem{theorem}{Theorem}[section]
\newtheorem*{theorem-otal}{Theorem 1.3}
\newtheorem{proposition}[theorem]{Proposition}
\newtheorem{lemma}[theorem]{Lemma}
\newtheorem{corollary}[theorem]{Corollary}
\theoremstyle{definition}
\newtheorem{remark}[theorem]{Remark}
\newcommand{\R}{{\mathbb R}}
\newcommand{\Hyp}{{\mathbb H}}
\newcommand{\N}{{\mathbb N}}
\newcommand{\F}{{\mathbb F}}
\newcommand{\Z}{{\mathbb Z}}
\newcommand{\C}{{\mathbb C}}
\newcommand{\T}{{\mathcal T}}
\newcommand{\FF}{\mathcal F}
\newcommand{\X}{\mathcal{X}}
\newcommand{\Hom}{\mathrm{Hom}}
\newcommand{\hyp}{\mathrm{hyp}}
\newcommand{\irr}{\mathrm{irr}}
\newcommand{\Out}{\mathrm{Out}}
\newcommand{\PSLtwoR}{\mathrm{PSL}(2,\R)}
\newcommand{\SLtwoC}{\mathrm{SL}(2,\C)}
\newcommand{\SLtwoR}{\mathrm{SL}(2,\R)}
\newcommand{\tr}{\mathrm{tr}\,}
\definecolor{light-gray}{gray}{0.95}
\long\def\symbolfootnote[#1]#2{\begingroup%
	\def\thefootnote{\fnsymbol{footnote}}\footnote[#1]{#2}\endgroup}
\def\blfootnote{\xdef\@thefnmark{}\@footnotetext}
\definecolor{darkorange}{rgb}{1.0, 0.55, 0.0}
\definecolor{aogreen}{rgb}{0.0, 0.5, 0.0}
\begin{document}
	
{\Large \bfseries Ordering curves on surfaces}

{\large Hugo Parlier, Hanh Vo\symbolfootnote[1]{Hanh Vo is supported by the AMS-Simons Travel Grant. } and Binbin Xu\symbolfootnote[2]{\small Binbin Xu was supported by the Fundamental Research Funds for the Central Universities, Nankai University (Grant Number 63231055), Natural Science Foundation of Tianjin (Grant number 22JCYBJC00690) and LPMC at Nankai University.\\
{\em 2020 Mathematics Subject Classification:} Primary: 57K20, 57M50. Secondary: 32G15, 37F30. \\
{\em Key words and phrases:} Teichm\"uller spaces, length orders, $k$-systoles}}
	
\vspace{0.5cm}
	
{\bf Abstract.}
We study the order of lengths of closed geodesics on hyperbolic surfaces. Our first main result is that the order of lengths of curves determine a point in Teichm\"uller space. In an opposite direction, we identify classes of curves whose order never changes, independently of the choice of hyperbolic metric. We use this result to identify short curves with small intersections on pairs of pants.
\section{Introduction}
	
Lengths of closed curves, or more precisely, lengths of marked closed geodesics, are essential tools in the study of hyperbolic surfaces and their related moduli and Teichm\"uller spaces. They are related to a plethora of interesting phenomena and have been studied from various perspectives including their relationship to other topics such as number theory and dynamical systems.
	
A particularly intriguing phenomenon, first discovered by Randol using work of Horowitz, is the existence of length equivalent curves. These are pairs (or more generally tuples) of homotopy classes of closed curves whose corresponding geodesic representatives are always of equal length when the hyperbolic metric on the surface varies. In particular, the existence of such tuples of curves shows that multiplicity in the length spectra of hyperbolic surfaces is unbounded. Various properties of length equivalent curves are studied, but their complete topological characterization is not known yet. 
	
In another direction, McShane and the first author studied in \cite{McShane-Parlier08} the lack of multiplicities in the simple length spectrum. As a corollary they showed that, in the case of closed surfaces and surfaces with given boundary lengths, the length orders of interior simple closed geodesics determine a hyperbolic metric uniquely. By considering closed geodesics with self-intersections as well, we complete this result, removing the condition on boundary lengths and include the case of pairs of pants. 
\begin{theorem}\label{thm:orderdistiguishpoints}
	Let $\Sigma$ be a finite type orientable surface of negative Euler characteristic and $\mathcal{T}(\Sigma)$ be its Teichm\"uller space. The map that sends a point in $\mathcal{T}(\Sigma)$ to its order of interior curve lengths is injective.
\end{theorem}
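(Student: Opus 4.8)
The plan is to show that if two points $X,Y\in\mathcal{T}(\Sigma)$ induce the same order on interior curve lengths, then $\ell_X=\ell_Y$ on every interior class, whence $X=Y$ by the classical fact that the marked length spectrum determines a point in Teichm\"uller space. Throughout I read the order as marked data: for each free homotopy class $\gamma$ of interior closed curve we record $\ell_X(\gamma)$, and ``same order'' means $\ell_X(\gamma_1)\le\ell_X(\gamma_2)\iff\ell_Y(\gamma_1)\le\ell_Y(\gamma_2)$ for all pairs. The strategy is to extract rigidity from the algebra of lengths in three stages, all but one of which are soft.

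First I would promote order-equality to a functional identity. Since the order preserves strict inequalities and equalities alike, the value $\ell_Y(\gamma)$ depends only on $\ell_X(\gamma)$, so there is a strictly increasing $f$ on the value set $V_X=\{\ell_X(\gamma)\}$ with $\ell_Y=f\circ\ell_X$. Next I would pin $f$ down using powers: for any interior class $\gamma$ and any $n$, the class $\gamma^n$ is interior with $\ell_X(\gamma^n)=n\,\ell_X(\gamma)$, so $f(nt)=nf(t)$ for all $t\in V_X$. Combined with monotonicity this forces linearity, since for $v,v'\in V_X$ and positive integers $n,m$ we have $nv<mv'\iff nf(v)<mf(v')$, so $v'/v$ and $f(v')/f(v)$ have the same rational cuts and hence coincide; thus $f(v)/v$ is a constant $c>0$ and $\ell_Y=c\,\ell_X$.

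The hard part, and the only place where the curvature normalization rather than order-theoretic bookkeeping does the work, is ruling out $c\neq1$. Here I would invoke a length identity that is \emph{not} invariant under rescaling. Choosing interior geodesics $A,B$ whose products $AB,AB^{-1}$ are again interior (on a pair of pants one may take the figure eights $A=c_1c_2^{-1}$ and $B=c_2c_3^{-1}$, so $AB=c_1c_3^{-1}$ is the third figure eight), the $\SLtwoR$ trace identity $\tr(AB)+\tr(AB^{-1})=\tr(A)\tr(B)$ becomes, via $|\tr|=2\cosh(\ell/2)$ and writing $a=\ell_X(A),\,b=\ell_X(B)$, a relation of the shape $\cosh\frac{c(a+b)}{2}+\cosh\frac{c(a-b)}{2}=\cosh\frac{c\,\ell(AB)}{2}+\cosh\frac{c\,\ell(AB^{-1})}{2}$ holding at $c=1$ for $X$ (the signs are topological, hence identical for $X$ and $Y$). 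Substituting $\ell_Y=c\,\ell_X$ demands the same relation at the value $c$; but translation length is subadditive and $A,B$ generate a nonabelian group, so $a+b$ strictly exceeds $\ell(AB)$ and $\ell(AB^{-1})$, making the left side dominate as $c\to\infty$ and the difference tend to $\pm\infty$. Hence the zero set in $c$ is bounded and discrete; comparing leading exponential terms across a second such identity (or, for finite-area $\Sigma$, invoking the prime geodesic theorem, whose leading term $e^{L}/L$ is metric-independent) forces $c=1$. I expect this uniqueness of the solution $c=1$ to be the genuine obstacle, since it is exactly where one must exploit that $c\,\ell_X$ cannot be the length spectrum of a second hyperbolic structure.

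Finally, with $c=1$ we obtain $\ell_X=\ell_Y$ on all interior classes, and marked length spectrum rigidity gives $X=Y$, proving injectivity.
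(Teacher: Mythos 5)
Your architecture is essentially the paper's, run in the contrapositive and with the two halves in the opposite order. The paper first proves that the vector $\bigl(\ell(u),\ell(v),\ell(uv),\ell(u^{-1}v)\bigr)$ is projectively injective --- which is exactly your ``rule out $c\neq1$'' step, phrased as: the $\cosh$-identity coming from $\tr(AB)+\tr(AB^{-1})=\tr A\,\tr B$ has $t=1$ as its \emph{unique} positive solution after rescaling all lengths by $t$ --- and then converts a discrepancy of length ratios into a discrepancy of length order using the curves $\beta\ast\gamma_1^{j}$, $\beta\ast\gamma_2^{k}$ and the asymptotics $\ell(\beta\ast\gamma^{j})/j\to\ell(\gamma)$, which is the same rational-cut comparison you run with powers. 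Your steps producing $\ell_Y=c\,\ell_X$ are fine (modulo the cosmetic point that if the order is taken over primitive classes or over curves-as-images you should use $\beta\ast\gamma^{n}$ rather than $\gamma^{n}$, exactly as the paper does), and they correctly reduce the theorem to showing $c=1$.

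The gap is in how you rule out $c\neq1$, and it is genuine. First, translation length is \emph{not} subadditive: when $\tr(AB)$ and $\tr(AB^{-1})$ have opposite signs the identity forces $\cosh\tfrac{\ell(AB^{-1})}{2}=\cosh\tfrac{a+b}{2}+\cosh\tfrac{a-b}{2}+\cosh\tfrac{\ell(AB)}{2}$, hence $\ell(AB^{-1})>a+b$; this mixed-sign situation actually occurs (e.g.\ on any pair of pants with $A=a$, $B=b$ the figure-eight $ab^{-1}$ is longer than $\ell(a)+\ell(b)$ --- the very fact quoted in the introduction). So ``the left side dominates as $c\to\infty$'' fails in general, and even where it holds, ``the zero set is bounded and discrete'' does not yield uniqueness of the zero at $c=1$. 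The paper closes exactly this hole with a case analysis on the sign pattern: the mixed-sign cases are handled by monotonicity of $t\mapsto\cosh(t\ell_1)/\cosh(t\ell_3)$ (Lemma \ref{lem:techlem}), and the all-positive case by Lemma \ref{lem:projectiveinjectiontechlem}, which says $\cosh(x_1t)+\cosh(x_2t)-\sum_k\cosh(y_kt)$ has at most one positive zero when $x_1>y_k$ for all $k$. Your fallback via the prime geodesic theorem only covers finite-area structures: for a surface with geodesic boundary the counting function grows like $e^{\delta L}$ with $\delta<1$ depending on the metric, so it does not exclude $c\neq1$ there, and the theorem is claimed for all finite-type surfaces including those with boundary. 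In short, the skeleton is right and matches the paper's, but the crux --- uniqueness of the positive solution of the rescaled trace identity --- still needs the paper's (or an equivalent) argument rather than the domination heuristic.
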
 

The proof of this theorem is based on a projectively injective embedding of the Teichm\"uller space $\T(P)$ of a pair of pants $P$ into $\R^4$, constructed by considering the lengths of four interior curves on $P$. Since any finite type orientable surface $\Sigma$ of negative Euler characteristic can be covered by finitely many embedded pairs of pants, and any hyperbolic metric on $\Sigma$ is determined by its restrictions to these pair of pants, we have the following result (where $\ell_X([\gamma])$ denotes the length of the unique closed geodesic in the homotopy class of $\gamma$ for a choice of hyperbolic metric $X$):

\begin{theorem}\label{thm:generalsurfaceprojectiveinjection}
    Let $\Sigma_{g,n}$ be an oriented surface of genus $g$ with $n>0$ boundary components such that $2-2g-n<0$. There exists a finite collection of interior curves $\gamma_1,...,\gamma_r$ in $\Sigma$ such that the map
        \[
            \begin{aligned}
                \Phi:\T(\Sigma)&\rightarrow \R^r,\\
                        X&\mapsto (\ell_X([\gamma_1]),...,\ell_X([\gamma_r])),
            \end{aligned}   
        \]
    is projectively injective.

Moreover, the self-intersection number of the curves can be chosen to be at most $k$ where
\begin{itemize}
    \item for $\Sigma_{1,1}$, $k=0$,
    \item for $\Sigma_{0,3}$, $k=12$,
    \item for $\Sigma_{0,n}$ with $n>3$, $k=3$,
    \item for $\Sigma_{g,n}$ with $g>1$, $k=2$.
\end{itemize}
\end{theorem}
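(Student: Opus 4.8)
The plan is to build the collection $\gamma_1,\dots,\gamma_r$ block by block, using the projectively injective embedding $\T(P)\hookrightarrow\R^4$ as the basic building block and the covering of $\Sigma$ by embedded pairs of pants to glue the blocks together. The geometric fact I would rely on throughout is that an embedded pair of pants $P\subset\Sigma$ with geodesic boundary is convex: for a curve contained in $P$, its geodesic representative in $\Sigma$ stays inside $P$ and so has the same length whether measured in $P$ or in $\Sigma$. Hence, for a curve $\gamma\subset P$, the corresponding coordinate $\ell_X(\gamma)$ of $\Phi$ depends only on the restriction $X|_P\in\T(P)$.

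First I would fix a pants decomposition of $\Sigma$ and record the resulting embedded pairs of pants $P_1,\dots,P_m$, together with the finitely many extra embedded pants needed so that the restrictions $X|_{P_j}$ determine $X$, which is the covering principle quoted before the statement; concretely, for each interior decomposition curve one also includes the pants produced by an elementary move across it, whose boundaries cross the decomposition curves and therefore detect the twist parameters. For each $P_j$ I would take the four interior curves $\delta_1^{j},\dots,\delta_4^{j}$ furnished by the pair-of-pants embedding, and let $\gamma_1,\dots,\gamma_r$ be the union of all these curves over the covering family. Projective injectivity then follows formally: if $\Phi(X)=\lambda\,\Phi(Y)$ for some $\lambda>0$, then restricting to the four coordinates attached to a single $P_j$ and using convexity, the tuples $(\ell_X(\delta_i^j))_i$ and $(\ell_Y(\delta_i^j))_i$ are proportional with ratio $\lambda$; since the pair-of-pants map $\T(P_j)\to\R^4$ is projectively injective, this forces $X|_{P_j}=Y|_{P_j}$. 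As this holds for every pants in the covering family, the covering principle gives $X=Y$. This proves the existence statement, and applied to $\Sigma_{0,3}=P$ it yields $k=12$ directly from the self-intersection bound on the four embedding curves.

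The main work, and the main obstacle, is the \emph{moreover} part: realizing the improved self-intersection bounds for the remaining surfaces. Applying the pants embedding verbatim to every block costs self-intersection up to $12$, but on a larger surface one has much more room and should not pay this price on interior pants. Indeed, for an interior pair of pants all three boundary curves are themselves interior decomposition curves (simple, self-intersection $0$), and their lengths together with those of the simple crossing curves from the elementary moves already determine $X|_{P_j}$. Self-intersections are thus only forced near $\partial\Sigma$, where a boundary component of a pants is a boundary component of $\Sigma$ and an interior curve must be used to read off its length. The task becomes to exhibit, near each boundary component, low-complexity interior curves whose lengths recover the boundary lengths projectively, and to check that the resulting economical system is still projectively injective.

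I expect this to require a case analysis. For $\Sigma_{1,1}$ the trace identity $\ell_\partial=F(\ell_\alpha,\ell_\beta,\ell_{\alpha\beta})$ expresses the boundary length through the three simple curves $\alpha,\beta,\alpha\beta$, giving $k=0$; for $g>1$ the genus provides enough room to route the boundary-reading curves with at most two self-intersections; and for planar $\Sigma_{0,n}$ with $n>3$ the tighter topology forces at most three. The delicate step is verifying that these specific low-complexity families are projectively injective, rather than merely generically injective, and this is where the explicit length computations on the relevant four-holed spheres and one-holed tori would be carried out.
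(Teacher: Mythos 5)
Your overall architecture matches the paper's: cover $\Sigma$ by finitely many embedded pairs of pants whose interiors cover the surface, apply a projectively injective four-curve map $\T(P)\to\R^4$ to each block, and conclude that $\Phi(X)=\lambda\Phi(Y)$ forces proportionality of each block's quadruple, hence $X|_{P_j}=Y|_{P_j}$ for every block and $X=Y$. That part, together with the $k=12$ bound for $\Sigma_{0,3}$ read off from the four curves of Corollary \ref{cor:pairofpantsprojectiveinjection}, is sound and is essentially the paper's argument.

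The gap is in the \emph{moreover} clause, which is where the real content of the statement lies, and you leave it explicitly open (``the delicate step is verifying that these specific low-complexity families are projectively injective\dots this is where the explicit length computations would be carried out''). Two concrete problems. First, the mechanism that kills the projective factor $\lambda$ is not a generic ``lengths determine the metric'' principle but the trace identity $\tr M\,\tr N=\tr MN+\tr M^{-1}N$: the four curves attached to a pants must be of the form $u$, $v$, $uv$, $u^{-1}v$ for a single free basis $(u,v)$ of its fundamental group, all four interior in $\Sigma$. Your proposed system for interior pants (the three simple cuffs plus simple transversal curves from elementary moves) is not of this form, so it only yields injectivity; you would still need to designate at least one block of the special algebraic form to pin down $\lambda=1$, and you never do. The same objection applies to your $\Sigma_{1,1}$ argument: expressing $\ell_\partial$ through $\ell_\alpha,\ell_\beta,\ell_{\alpha\beta}$ gives nothing projective --- what is needed is the McShane--Parlier statement that the quadruple of simple curves $u,v,uv,u^{-1}v$ is projectively injective. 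Second, the improved bounds require no new computation on four-holed spheres; they follow from exhibiting, for each type of pants in the cover, a free basis whose quadruple is interior and of low complexity, and this is exactly what you do not produce. The paper's choices are: $(a,b)$ giving $\{a,b,ab,a^{-1}b\}$ (max self-intersection $1$) for a pants with all cuffs interior; $(a^{-1},a^{-1}b)$ giving $\{a,b,a^{-1}b,a^{-2}b\}$ (max $2$) when one cuff lies on $\partial\Sigma$; $(a^{-1},a^{-2}b)$ giving $\{a,a^{-1}b,a^{-2}b,a^{-3}b\}$ (max $3$) when two cuffs do. Without such explicit quadruples, checked to avoid the peripheral conjugacy classes of $\Sigma$, the claimed values of $k$ are not established.
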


\medskip

As mentioned previously, length equivalent curves have the feature that, for any choice of hyperbolic metric, their relative orders of lengths never changes. A different type of example comes from Figure-$8$ curves. Given any hyperbolic metric on $\Sigma$, a figure-$8$ geodesic is always longer than the boundary geodesics of the unique embedded pair of pants containing it. Our next results exhibit a third way to find curves whose length order never changes.

For this we consider the presentation of $\pi_1(P)=\langle a,b\rangle$ such that $a$ and $b$ correspond to two boundary curves of $P$ respectively, and such that $ab$ corresponds to the third boundary curve of $P$. 

Any reduced word $w$ of letters in $\{a^{\pm1}, b^{\pm1}\}$ corresponds to a homotopy class $[\alpha]$ of curve $\alpha$ in $P$. Then, given any hyperbolic metric on $P$, We denote by $\ell_X(w)$ the $X$-length of the geodesic representative in $[\alpha]$. Given any words $u$ and $v$, $uv$ will then correspond to the concatenation of the corresponding curves. Our next result is stated as follows.  

\begin{theorem}\label{thm:orderinvariant}
 Let $u$ and $v$ be two cyclically reduced words of letters in $\{a^{\pm1},b^{\pm1}\}$. If $uv$ is cyclically reduced and $u$ starts and ends with different letters, then for all $X \in\mathcal{T}(P)$, we have
		\[
			\ell_X(u)<\ell_X(uv).
		\]
\end{theorem}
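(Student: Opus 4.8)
The plan is to pass from lengths to traces and then to a positivity/expansion estimate. For the holonomy $\rho:\pi_1(P)\to\PSLtwoR$ of the structure $X$, lifted to $\SLtwoR$, every nontrivial element is hyperbolic (or parabolic in the cusped case), so writing $\lambda(M)\ge 1$ for the spectral radius of $M\in\SLtwoR$ one has $\ell_X(w)=2\log\lambda(\rho(w))$. Thus the inequality $\ell_X(u)<\ell_X(uv)$ is equivalent to $|\tr\rho(uv)|>|\tr\rho(u)|$, and more conveniently to $\lambda(\rho(uv))>\lambda(\rho(u))$ for every $X\in\T(P)$. This last statement is what I would actually prove, the point being that it removes all dependence on the transcendental length function and replaces it by an algebraic comparison of matrices.

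Next I would set up ping-pong. Since $P$ is a pair of pants, the axes of $a$ and $b$ are disjoint, so the four fixed points of $A:=\rho(a)$ and $B:=\rho(b)$ on $\partial\Hyp$ are unlinked and $\langle A,B\rangle$ is a Schottky group. Fix four pairwise disjoint closed arcs $I_a,I_{a^{-1}},I_b,I_{b^{-1}}$ containing the attracting fixed points of $A,A^{-1},B,B^{-1}$, each generator sending the complement of the arc of its inverse into the arc of itself. For a cyclically reduced word $w=x_1\cdots x_n$ this places the attracting direction of $\rho(w)$ in $I_{x_1}$ and the repelling one in $I_{x_n^{-1}}$. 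In particular $U:=\rho(u)$ and $UV:=\rho(uv)$ have their attracting directions in the \emph{same} arc $I_{x_1}$, since both words begin with the first letter $x_1$ of $u$, whereas their repelling directions lie in $I_{x_n^{-1}}$ and $I_{y_m^{-1}}$ respectively, $y_m$ being the last letter of $v$.

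The engine is a cone/positivity estimate. Lifting the arcs to disjoint sectors in $\R^2$ and choosing a basis in which these cones are positive, the aim is to normalize the entries of $A^{\pm1},B^{\pm1}$ so that for any cyclically reduced $w$ the cyclic expansion $\tr\rho(w)=\sum (X_1)_{j_1j_2}\cdots(X_n)_{j_nj_1}$ becomes, after factoring out a global sign depending only on $w$, a sum of nonnegative monomials. Because $uv$ is cyclically reduced, appending the block $V=\rho(v)$ to $U$ creates no cancellation, so every monomial contributing to $|\tr U|$ survives inside the expansion of $|\tr UV|$ and is joined by new, strictly positive contributions; equivalently, $U$ maps a cone over $I_{x_1}$ strictly into itself and the no-cancellation hypothesis forces $V$ to carry the repelling cone of $U$ strictly deeper, so the composite expands strictly more. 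The hypothesis that $u$ \emph{starts and ends with different letters} is exactly what makes this strict: it guarantees that the repelling arc $I_{x_n^{-1}}$ and the attracting arc $I_{x_1}$ of $U$ are distinct, so that at least one genuinely positive new term appears and $\lambda(\rho(uv))>\lambda(\rho(u))$ rather than merely $\ge$.

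The main obstacle, and the delicate heart of the argument, is establishing this positivity bookkeeping uniformly over $\T(P)$: producing a single sign normalization of $A^{\pm1},B^{\pm1}$ valid for all metrics, checking that every reduced word lands in the resulting positive semigroup, and—hardest of all—upgrading the inequality to a strict one. Strictness is precisely where both hypotheses are consumed: cyclic reducedness of $uv$ keeps every contribution of $U$ alive, while the distinct-first-and-last-letter condition on $u$ supplies the extra strictly positive monomial. Verifying that this surviving positive contribution cannot be cancelled, and hence that the distinct-endpoint hypothesis is genuinely needed, is the subtle case analysis I expect to dominate the proof.
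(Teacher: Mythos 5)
Your reduction from lengths to traces is fine, and the ping-pong/positivity strategy is genuinely different from the paper's argument (which cuts $P$ along the three seams $\xi_1,\xi_2,\xi_3$ into two hexagons, encodes geodesics as admissible words in the letters $\xi_i^{\pm}$, and uses the reflection isometry exchanging the two hexagons to turn the geodesic of $uv$ into a strictly shorter piecewise-geodesic representative of $u$). But there is a genuine gap, and it is not just an unverified computation: the mechanism you describe would prove a false statement. You locate the role of the hypothesis that $u$ starts and ends with different letters in the claim that it forces the attracting arc $I_{x_1}$ and the repelling arc $I_{x_n^{-1}}$ of $\rho(u)$ to be distinct. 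That is automatic for \emph{every} cyclically reduced $u$ (the two arcs coincide only if $x_n=x_1^{-1}$, which cyclic reducedness forbids), so your argument never actually uses the hypothesis and would apply verbatim to $u=aa$, $v=b$. There the conclusion fails: with $x=\tr\rho(a)$, $y=\tr\rho(b)$, $z=\tr\rho(ab)$ one has $\tr\rho(a^2b)=xz-y$ and $\tr\rho(a^2)=x^2-2$, and taking $x\ll-2$ with $y,z$ near $-2$ (a legitimate point of $\Omega$) gives $|\tr\rho(a^2b)|\approx 2|x|+2$, which is much smaller than $x^2-2$, i.e.\ $\ell_X(a^2b)<\ell_X(a^2)$. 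Whatever makes the theorem true must distinguish this case from, say, $u=ab^{-1}$, and the cone picture as you have set it up does not.

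Separately, the central positivity step is asserted rather than proved, and it is not a formal consequence of entrywise nonnegativity: $\tr(UV)$ expands as a sum of monomials of degree $m+n$ in the entries of the letters while $\tr U$ is a sum of monomials of degree $n$, so the latter's terms do not literally ``survive inside'' the former, and for general nonnegative matrices in $\SLtwoR$ one can have $\tr(UV)<\tr(U)$. What you actually need is a quantitative statement that $V$ carries the relevant cone deep enough, uniformly over $\T(P)$, and that is exactly the part left undone. For comparison, the paper sidesteps traces entirely: Propositions \ref{prop:nocancellation} and \ref{prop:onecancellation} show that reflecting (via the order-two isometry $\iota$ of $P$) the segments of the geodesic of $uv$ lying in one hexagon produces a path of the same length that is homotopic to the curve of $u$, whence the strict inequality; the endpoint hypothesis on $u$ enters through the combinatorics of the translation $\theta$ from words in $a^{\pm1},b^{\pm1}$ to admissible words, which is precisely where $u=aa$ is ruled out.
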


As a special case, if we consider the following collection of words
    \[
        \{ab^{-n}\mid n\in\N\},
    \]
then the above theorem tells us that given any hyperbolic metric $X$ on $P$, for any $n\in\N$, we have
    \[
        \ell_X(ab^{-n})<\ell_X(ab^{-(n+1)}).
    \]
This case can be checked directly in another way, since for any geodesic representative of $ab^{-(n+1)}$, there is an embedding of the homotopy class of $ab^{-n}$. Theorem \ref{thm:orderinvariant} provides more subtle examples. For example, let $u=aab^{-1}$ and $v=a^{-1}b$. By our result, for any hyperbolic metric $X$ on $P$, we have $$\ell_X(aab^{-1})<\ell_X(aab^{-1}a^{-1}b).$$
By considering embeddings or immersions of $P$ into $\Sigma$, we can get such pairs in any $\Sigma$ as well.

Another application of Theorem \ref{thm:orderinvariant} is to provide an algorithm to look for shortest closed geodesics among all those that self-intersect at least $k$-times, often called $k$-systoles. The algorithm is a combination between Theorem \ref{thm:orderinvariant} and an algorithm due to Despr\'e-Lazarus \cite{Despre-Lazarus} which is used to compute the self-intersection number of a curve. It is conjectured \cite{MR3380366} that for any $k\in\N^\ast$, the $k$-systole of a pair of pants always self-intersects $k$ times. The above algorithm help us settle the conjecture for small $k$.
\begin{corollary}\label{cor:ksystole}
    Given any hyperbolic metric on $P$, for any $1\leq k\leq 4$, the $k$-systole always self-intersects $k$ times.
\end{corollary}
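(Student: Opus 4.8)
The plan is to prove Corollary~\ref{cor:ksystole} by combining the order-invariance result of Theorem~\ref{thm:orderinvariant} with the self-intersection-number algorithm of Despr\'e--Lazarus in order to reduce an a priori infinite search to a finite one. The conjecture asserts that the $k$-systole self-intersects exactly $k$ times; since a geodesic that self-intersects at least $k$ times automatically has self-intersection number $\geq k$, the content is to show that, for $1 \le k \le 4$, no geodesic self-intersecting strictly more than $k$ times can be shorter than the shortest geodesic self-intersecting exactly $k$ times. The key reduction is this: to identify the $k$-systole it suffices to enumerate, up to the equivalence relations that preserve free homotopy class (cyclic permutation, inversion, and the mapping-class-group symmetries of $P$), the finitely many cyclically reduced words whose geodesic representatives could plausibly be shortest, compute their self-intersection numbers with the Despr\'e--Lazarus algorithm, and then rule out all longer candidates by a length comparison.

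First I would set up the combinatorial model: every free homotopy class of closed curve on $P$ corresponds to a cyclically reduced word $w$ in $\{a^{\pm1},b^{\pm1}\}$, taken up to cyclic permutation and inversion, and the self-intersection number of the associated geodesic is a topological invariant computable by the Despr\'e--Lazarus algorithm. I would group words by word length $|w|$ and observe that, heuristically, longer words tend to give longer geodesics, so that for each target self-intersection number $k$ one expects the $k$-systole to be realized by a word of small length. Second, for each $k \in \{1,2,3,4\}$ I would run the algorithm over all cyclically reduced words up to some length bound $N$, recording those with self-intersection number exactly $k$ and isolating the candidate minimizers; this is the finite computation that produces explicit candidate $k$-systoles.

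The crucial step — and where Theorem~\ref{thm:orderinvariant} earns its place — is to certify that no word of length exceeding the search bound $N$ can beat the candidate found within the bound, uniformly over all $X \in \mathcal{T}(P)$. Here I would use the fact that Theorem~\ref{thm:orderinvariant} guarantees, whenever $w = uv$ is cyclically reduced with $uv$ cyclically reduced and $u$ starting and ending with distinct letters, the strict metric-independent inequality $\ell_X(u) < \ell_X(uv)$. By writing any sufficiently long word as an extension $uv$ of a shorter subword $u$ of the appropriate form, I obtain a chain of length inequalities valid for \emph{every} hyperbolic metric, which lets me discard all longer words by comparing them against suitable short prefixes that are already known to exceed the candidate $k$-systole in length. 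The point is that these comparisons do not depend on $X$, so a single finite verification settles the infinitely many remaining cases simultaneously.

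The main obstacle I anticipate is the certification step rather than the enumeration: Theorem~\ref{thm:orderinvariant} only applies when the hypotheses (cyclic reducedness of $uv$, and $u$ beginning and ending with different letters) are met, so not every long word admits an immediate shorter witness of the required syntactic shape. I would need to verify that the finitely many combinatorial ``types'' of words that survive the length bound can each be reduced, via a chain of admissible $uv$-decompositions, to one of the short words already handled, and to treat separately any exceptional short words for which the hypotheses fail (these must be checked by a direct length estimate or by the algorithm). Controlling these exceptional cases, and confirming that the chosen search bound $N$ is genuinely large enough that every longer word is dominated, is the delicate part; for $k \le 4$ the number of such cases should remain small enough to handle by hand or by a finite computation, which is presumably why the statement is restricted to this range.
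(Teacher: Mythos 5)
Your overall architecture --- reduce to a finite list of candidate words, compute their self-intersection numbers with the Despr\'e--Lazarus algorithm, and dispose of the infinitely many remaining classes by metric-independent length comparisons --- matches the paper's, and you correctly locate the crux in certifying that nothing outside the finite search can be shorter. But the mechanism you propose for that certification is where the gap lies. Theorem~\ref{thm:orderinvariant} only shows that a word $uv$ is longer than a suitable subword $u$; to conclude that a long word is longer than the candidate $k$-systole you would need the subword $u$ to \emph{already} dominate that candidate uniformly in $X$, hence in practice to have at least $k$ self-intersections itself and to lie in your verified list. A word with at least $k$ self-intersections need not contain, up to cyclic permutation, a bounded-length prefix with that property and with the required syntactic shape ($u$ beginning and ending in distinct letters, $uv$ cyclically reduced), and you give no argument that such a decomposition always exists; the chain of inequalities can bottom out at a subword with few self-intersections, which for some metrics is much shorter than the candidate. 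Moreover, word length in $\{a^{\pm1},b^{\pm1}\}$ is the wrong complexity measure here: the paper measures a curve by the number of its intersections with the three perpendicular arcs $\xi_1,\xi_2,\xi_3$, i.e.\ by the length of its admissible word in $\{\xi_1^{\pm},\xi_2^{\pm},\xi_3^{\pm}\}$.

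The truncation in the paper is supplied not by Theorem~\ref{thm:orderinvariant} but by Baribaud's theorem \cite{Baribaud}: any closed geodesic whose admissible word has at least $2(n+1)$ letters satisfies $\ell_X(\eta)\ge\ell_X(a^nb^{-1})$ for every $X$. Taking $n=k$, every geodesic with at least $k$ self-intersections is therefore either at least as long as $a^kb^{-1}$, which has exactly $k$ self-intersections, or corresponds to one of the finitely many admissible words with at most $2k$ letters; these form the set $\mathcal{S}_{\ge k}$ that is enumerated and fed to the Despr\'e--Lazarus algorithm. This is the uniform, metric-independent lower bound your proposal needs and cannot extract from Theorem~\ref{thm:orderinvariant} alone. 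Finally, you underestimate the residual finite check: for $k=4$ the list $\mathcal{S}_{\ge 4}$ contains three words with $5$ self-intersections, and one must prove, for every metric, that each is longer than a competitor with exactly $4$ self-intersections; the paper does this by explicit smoothing arguments, not by the Despr\'e--Lazarus algorithm (which computes intersection numbers, not lengths) and not by Theorem~\ref{thm:orderinvariant}.
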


This paper is organized as follows. We begin with a preliminary section where we introduce notation, and the Fricke type trace equations which we use to character varieties. In Section \ref{sec:projinj}, we find a projectively injective map from Teichm\"uller space into Euclidean space. We then use this in Section \ref{sec:order} to prove our statement on orders of lengths. In Section \ref{sec:neverchange} we find examples of curves whose length order never changes, and prove Corollary \ref{cor:ksystole}.

\section{Preliminaries}\label{sec:prelim}

After introducing notations we will use throughout the paper, we will review facts on character varieties that we will need.

\subsection{Set-up and notation}
Let $\Sigma$ be an oriented topological surface of genus $g$ with $n$ boundary components and such that the Euler characteristic $\chi(\Sigma)=2-2g-n$ is negative. A \textit{curve} on $\Sigma$ is the image of a continuous map from $S^1$ to $\Sigma$ and is said to be \textit{essential} if it is not homotopic to a point or a boundary component. A curve is \textit{peripheral} if it is homotopic to a boundary component. In what follows, curves will always either be essential or peripheral.

Given any curve $\alpha$, by abuse of notation, we use $\alpha$ to denote one of its parametrizations. Then its self-intersection number $i(\alpha)$ is defined to be the following quantity:
	\[
		i(\alpha):=\frac{1}{2}\#\{(s,t)\in S^1\times S^1\mid s\neq t,\,\,\alpha(s)=\alpha(t)\}.
	\]
The homotopy class of $\alpha$ is denoted by $[\alpha]$. We define its self-intersection number to be
	\[
		i([\alpha]):=\min\{i(\alpha')\mid \alpha'\in[\alpha]\}
	\]
A curve is simple if it has no self-intersection and a curve $\alpha$ is called a \textit{$k$-curve} if $i([\alpha])=k$. In particular, a simple curve is a $0$-curve. 

A \textit{hyperbolic structure} on $\Sigma$ is a Riemannian metric structure with constant curvature $-1$. We can think of them as surfaces which are locally isometric to the hyperbolic plane $(\Hyp,\mathrm{d}_\Hyp)$. We include structures such that boundary components are realized by simple closed geodesics or cusps. The \textit{Teichm\"uller space} $\T(\Sigma)$ of $\Sigma$ is the space of isotopy classes of hyperbolic structures on $\Sigma$. Let $X$ be a hyperbolic structure on $\Sigma$. Given any non-peripheral curve $\alpha$ on $\Sigma$, there is a unique geodesic representative in its homotopy class $[\alpha]$. We denote its $X$-length by $\ell_X([\alpha])$. This induces a map
	\[
		\begin{aligned}
			\ell_\alpha:\T(S)&\rightarrow \R\\
			X&\mapsto \ell_{X}([\alpha]).
		\end{aligned}
	\]
which is a \textit{length function}, on $\T(S)$, associated to $[\alpha]$.
	
Given any complete hyperbolic structure $X$ on $\Sigma$, it induces a complete hyperbolic structure $\widetilde{X}$ on the universal cover $\widetilde{\Sigma}$. We then have an isometry $f$ from $(\widetilde{\Sigma},\widetilde{X})$ to $\Hyp^2$, the hyperbolic plane (which will always be endowed with the standard hyperbolic metric $\mathrm{d}_\Hyp)$. The deck transformations of $\pi_1(\Sigma)$ on $\widetilde{\Sigma}$ induce, through $f$, an isomorphism
	\[
		\rho_{X,f}:\pi_1(\Sigma)\rightarrow\PSLtwoR,
	\]
by identifying the orientation preserving isometry group of $\Hyp^2$ with $\PSLtwoR$.

The isometry from $(\widetilde{\Sigma},\widetilde{X})$ to $\Hyp^2$ is not unique. If $f'$ is another such isometry, then $f'\circ f^{-1}$ is an isometry of $\Hyp^2$, hence an element $A$ of $\PSLtwoR$. We then have the following relation
	\[
		\rho_{X,f'}=A\rho_{X,f}A^{-1}.
	\] 
This induces an embedding of $\T(\Sigma)$ into
	\[
		\mathrm{Hom}^{d,f}((\pi_1(\Sigma,\partial\Sigma),\PSLtwoR)/\PSLtwoR,
	\]	
the space of conjugacy classes of discrete faithful representations from $\pi_1(\Sigma,\partial\Sigma)$ to $\PSLtwoR$. Here $\pi_1(\Sigma,\partial\Sigma)$ is the fundamental group $\pi_1(\Sigma)$ with some elements marked as peripheral elements. This is necessary, since different surfaces with boundaries may have isomorphic fundamental groups (for instance pairs of pants and once-punctured tori). 
	
There is one-to-one correspondence between the set of oriented curves on $\Sigma$ and the nontrivial conjugacy classes in $\pi_1(\Sigma)$. Let $\alpha$ be an oriented curve and $a$ be an element of $\pi_1(\Sigma)$ in the conjugacy class associated to $\alpha$. For any hyperbolic metric $X$ on $\Sigma$ and any representation $\rho$ associated to $X$, we have the relation
	\[
		2\cosh \frac{\ell_X([\alpha])}{2}=|\tr\rho(a)|.
	\]
\begin{remark}
	There is a slight, but harmless, abuse in the notation $|\tr\rho(a)|$. This is because it does not make sense to talk about trace of an element in $\PSLtwoR$, since 
		\[
			\PSLtwoR=\SLtwoR/\{\pm I_2\},
		\]
	but we can still talk about the absolute value of the trace.
\end{remark}

\subsection{The $\SLtwoC$-character variety of the rank $2$ free group}	
Let $\F_2$ be the free group on two generators. Throughout, we will let $(u,v)$ be a free basis of $\F_2$, then all non-identity elements in $\F_2$ can be written uniquely as reduced words of letters $\{u^{\pm1},v^{\pm1}\}$. More precisely, a word $w$ of letters $\{u^{\pm1},v^{\pm1}\}$ is \textit{reduced} if any pair of adjacent letters in $w$ are not inverse to each other. If moreover the beginning and ending letters of $w$ are not inverse to each other either, the word $w$ is said to be \textit{cyclically reduced}.

We first recall the following classical result of Fricke and Vogt.
\begin{theorem}[Fricke \cite{Fricke1896}, Vogt \cite{Vogt1889}]\label{thm:frickevogt}
	Let 
		\[
			f:\SLtwoC\times\SLtwoC\rightarrow\C
		\]
	be a regular function which is invariant under the diagonal action of $\SLtwoC$ by conjugation. There exists a polynomial function $F(x,y,z)\in\C[x,y,z]$ such that: for any pair $(M,N)\in\SLtwoC\times\SLtwoC$, we have
	\[
		f(M,N)=F(\tr(M),\tr(N),\tr(MN)).
	\]
	Furthermore, for all $(x,y,z)\in\C^3$, there exists $(M,N)\in\SLtwoC\times\SLtwoC$ such that
	\[
		\begin{bmatrix}
			x\\
			y\\
			z
		\end{bmatrix}=\begin{bmatrix}
			\tr(M)\\
			\tr(N)\\
			\tr(MN)
		\end{bmatrix}.
	\]
	Conversely, if $x^2+y^2+z^2-xyz\neq 4$ and $(M,N)$, $(M',N')\in\SLtwoC\times\SLtwoC$ satisfy
	\[
		\begin{bmatrix}
			x\\
			y\\
			z
		\end{bmatrix}=\begin{bmatrix}
			\tr(M)\\
			\tr(N)\\
			\tr(MN)
		\end{bmatrix}=\begin{bmatrix}
			\tr(M')\\
			\tr(N')\\
			\tr(M'N')
		\end{bmatrix}.
	\]
	Then there exists an element $Q\in\SLtwoC$ such that $(M,N)=(QM'Q^{-1},QN'Q^{-1})$.
\end{theorem}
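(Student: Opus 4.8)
The plan is to treat the three assertions in turn, using as the common engine the two basic trace identities in $\SLtwoC$: for any $A\in\SLtwoC$ the Cayley--Hamilton relation gives $A+A^{-1}=(\tr A)I$, hence $\tr(A^{-1})=\tr(A)$, and for any $A,B\in\SLtwoC$ one has the fundamental identity $\tr(AB)+\tr(AB^{-1})=\tr(A)\tr(B)$. Writing $x=\tr(M)$, $y=\tr(N)$, $z=\tr(MN)$, these already force $\tr(MN^{-1})=xy-z$, which is the reason three functions will suffice. For the first claim I would invoke the first fundamental theorem of invariant theory for tuples of $2\times 2$ matrices under simultaneous conjugation: the ring of conjugation-invariant regular functions on $\SLtwoC\times\SLtwoC$ is generated by the traces $\tr(w(M,N))$ of words $w$ in $M,N$. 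It then remains to show that every such word trace is a polynomial in $x,y,z$. This follows by induction on word length: using $\tr(A^{-1})=\tr(A)$ to remove inverses and $\tr(AB)=\tr(A)\tr(B)-\tr(AB^{-1})$ to split off a letter and shorten, one reduces any word trace to a polynomial in $x,y,z$, the base cases being $\tr(I)=2$, $\tr(M)=x$, $\tr(N)=y$, $\tr(MN)=z$. This produces the desired polynomial $F$.

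For surjectivity of the trace map I would exhibit explicit representatives. Given $(x,y,z)\in\C^3$, set
\[
M=\begin{bmatrix} x & -1\\ 1 & 0\end{bmatrix},\qquad
N=\begin{bmatrix} 0 & \zeta\\ -\zeta^{-1} & y\end{bmatrix},
\]
where $\zeta\in\C^\ast$ is a root of $\zeta^2-z\zeta+1=0$ (which always exists over $\C$). Then $\det M=\det N=1$, $\tr M=x$, $\tr N=y$, and a direct computation gives that $MN$ is upper triangular with diagonal $(\zeta^{-1},\zeta)$, so $\tr(MN)=\zeta+\zeta^{-1}=z$. Hence every triple is realized.

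The heart of the matter, and the step I expect to be the main obstacle, is the converse. The key dictionary is the classical identity $\tr([M,N])=\tr(MNM^{-1}N^{-1})=x^2+y^2+z^2-xyz-2$, so that the hypothesis $\kappa:=x^2+y^2+z^2-xyz\neq 4$ is exactly $\tr([M,N])\neq 2$. This in turn is equivalent to the pair $(M,N)$ being \emph{irreducible}, i.e.\ having no common eigenvector in $\mathbb{CP}^1$. Moreover, using $M^2=xM-I$, $N^2=yN-I$ and the relation $NM=yM+xN-MN+(z-xy)I$ (derived from the identities above), every word in $M,N$ re-expands in $\{I,M,N,MN\}$; hence this set always spans the algebra generated by $M,N$, and by Burnside's theorem irreducibility is equivalent to $\{I,M,N,MN\}$ being a basis of $\Mat_2(\C)$.

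Assuming $\kappa\neq 4$, both $(M,N)$ and $(M',N')$ are irreducible, so each of $\{I,M,N,MN\}$ and $\{I,M',N',M'N'\}$ is a basis. I would define the linear map $\phi:\Mat_2(\C)\to\Mat_2(\C)$ sending the first basis to the second. The crucial observation is that every product of two basis elements re-expands in $\{I,M,N,MN\}$ with coefficients that are \emph{the same} polynomials in $(x,y,z)$ for both pairs, precisely because $(M,N)$ and $(M',N')$ share the trace triple; hence $\phi$ respects multiplication on the basis and is a unital algebra isomorphism of $\Mat_2(\C)$. By Skolem--Noether it is inner, $\phi(A)=QAQ^{-1}$ for some $Q\in\GLtwoC$, so $M'=QMQ^{-1}$ and $N'=QNQ^{-1}$; rescaling $Q$ by a scalar to have determinant $1$ places $Q\in\SLtwoC$, as required. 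The delicate points to nail down are the equivalence between $\kappa\neq 4$ and linear independence of $\{I,M,N,MN\}$, and the verification that all structure constants depend only on $(x,y,z)$; once these are in place the Skolem--Noether step is immediate.
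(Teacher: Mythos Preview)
The paper does not prove this theorem at all: it is stated as a classical result and attributed to Fricke \cite{Fricke1896} and Vogt \cite{Vogt1889}, with no accompanying argument. There is therefore no ``paper's own proof'' against which to compare your proposal.

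That said, your sketch is a standard and essentially sound route to the result. The explicit pair you give for surjectivity checks out (the product $MN$ is indeed upper triangular with diagonal $(\zeta^{-1},\zeta)$), and the Skolem--Noether strategy for the converse is the usual one: the commutator-trace identity $\tr([M,N])=x^2+y^2+z^2-xyz-2$ correctly translates the hypothesis $\kappa\neq 4$ into irreducibility, after which $\{I,M,N,MN\}$ is a basis and the structure constants depend only on $(x,y,z)$. The one place where your write-up is genuinely sketchy is the word-length induction in the first part: the move ``$\tr(AB)=\tr(A)\tr(B)-\tr(AB^{-1})$ shortens the word'' is not literally a length decrease in general, and a careful argument needs either a more refined induction (e.g.\ on the number of alternations between $M$-blocks and $N$-blocks, using $M^2=xM-I$ and $N^2=yN-I$ to kill repeated letters first) or an appeal to the Procesi-type generation theorem you mention. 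This is routine to repair, but as written it is the weak link.
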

\begin{remark}\label{rmk:traceconditionforirreducible}
     Given a representation $\eta:\F_2\rightarrow\SLtwoC$, we consider $x=\tr\eta(u)$, $y=\tr\eta(v)$ and $z=\tr\eta(uv)$ (where $u,v$ is our free basis). The condition $x^2+y^2+z^2-xyz\neq 4$ is equivalent to the condition that $\eta(u)$ and $\eta(v)$ share no common eigenvector in $\C^2$, in which case the representation $\eta$ is said to be irreducible. Hence, up to conjugacy, an irreducible representation $\eta$ is determined by the triple $(\tr\eta(u),\tr\eta(v),\tr\eta(uv))$.
\end{remark}

Let $P$ be a thrice-punctured sphere, commonly called a pair of pants.  The fundamental group $\pi_1(P)$ is isomorphic to $\F_2$. Hence, the Teichm\"uller space $\T(P)$ can be considered as part of the character variety
	\[
		\Hom(\F_2,\PSLtwoR)/\PSLtwoR.
	\]
We can moreover lift representations of $\PSLtwoR$ to representations of $\SLtwoR$, where it now makes sense to talk about traces of matrices. More precisely, consider a representation 
        \[
            \rho:\F_2\rightarrow \PSLtwoR.
        \]
Given any basis $(u,v)$ of $\F_2$, let $\rho(u)=A$ and $\rho(v)=B$. By taking a lift $\widetilde{A}$ of $A$ and a lift $\widetilde{B}$ of $B$ in $\SLtwoR$, we can define a representation
	\[
		\widetilde{\rho}:\F_2\rightarrow \SLtwoR,
	\]
with $\widetilde{\rho}(u)=\widetilde{A}$ and $\widetilde{\rho}(v)=\widetilde{B}$. Let $\pi$ be the projection from $\SLtwoR$ to $\PSLtwoR$: we now have $\rho=\pi\circ\widetilde{\rho}$.
A similar discussion can be done for the Teichm\"uller space of one-holed torus whose fundamental group is also isomorphic to $\F_2$.

Using Theorem \ref{thm:frickevogt}, Goldman \cite{Goldman09} studied the character variety
	\[
		\X(\F_2,\SLtwoR):=\mathrm{Hom}(\F_2,\SLtwoR)/\SLtwoR
	\]
in an algebraic way in detail. For a choice $(u,v)$ of free basis of $\F_2$, consider the map
    \[
        \begin{aligned}
            \Phi:\X(\F_2,\SLtwoR)&\rightarrow \R^3,\\
                    [\rho]&\mapsto (\tr\rho(u),\tr\rho(v),\tr\rho(uv)).
        \end{aligned}
    \]
Goldman showed the following theorem.
\begin{theorem}\label{thm:liftspairofpants}
    By identifying $\pi_1(P)$ with $\F_2$, such that $u$, $v$ and $uv$ correspond to the three peripheral curves of $P$, the subset of $\X(\F_2,\SLtwoR)$ corresponding to lifts of representations associated to hyperbolic structures on $P$ can be identified with the following disjoint union of subsets of $\R^3$ under the map $\Phi$:
		\[
		\begin{aligned}
			\FF_P=\{(x,y,z)\in\R^3\mid x\le-2,y\le-2,z\le-2\}&\coprod\{(x,y,z)\in\R^3\mid x\le-2,y\ge2,z\ge2\}\,\\
			&\coprod\{(x,y,z)\in\R^3\mid x\ge2,y\le-2,z\ge2\},\\
			&\coprod\{(x,y,z)\in\R^3\mid x\ge2,y\ge2,z\le-2\}.
		\end{aligned}
		\]
	Moreover there is a $\Z_2\times\Z_2$ action on it corresponding to sign changing, and the Teichm\"uller space $\T(P)$ can be identified with the subset $\FF$ quotient by this action.
\end{theorem}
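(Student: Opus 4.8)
The plan is to prove the two inclusions ``lifting locus $\subseteq \FF_P$'' and ``$\FF_P\subseteq$ lifting locus'' separately, and then read off the $\Z_2\times\Z_2$-action and the quotient description. Fix a Fuchsian $\rho$ coming from a hyperbolic structure on $P$, a lift $\widetilde\rho$ to $\SLtwoR$, and write $x=\tr\widetilde\rho(u)$, $y=\tr\widetilde\rho(v)$, $z=\tr\widetilde\rho(uv)$. Since $u,v,uv$ are the three peripheral elements, each is either a closed boundary geodesic or a cusp, hence non-elliptic, so $|x|,|y|,|z|\ge 2$ (with equality exactly at cusps). This is the ``octant'' constraint; the real content is to decide which of the eight sign patterns of $(x,y,z)$ occur, the claim being precisely that $xyz<0$, i.e.\ that one lands in one of the four listed regions.

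For necessity of the sign condition I would avoid a direct matrix computation and instead argue by continuity. The space $\T(P)$ is a cell, parametrised by the three boundary lengths in $[0,\infty)$, hence connected and simply connected, so the fourfold cover recording the choice of $\SLtwoR$-lift is trivial and admits four continuous global sections, interchanged by the $\Z_2\times\Z_2$-action that flips the signs of the lifts of $\rho(u)$ and $\rho(v)$. Along any one section each of $x,y,z$ is a continuous real function with $|x|,|y|,|z|\ge 2$, so it never vanishes and its sign is constant over the connected base; thus each section carries a single well-defined sign pattern. I then evaluate at one explicit point, the thrice-punctured sphere realised by $\Gamma(2)$ with $u=\left(\begin{smallmatrix}1&2\\0&1\end{smallmatrix}\right)$ and $v=\left(\begin{smallmatrix}1&0\\-2&1\end{smallmatrix}\right)$, giving $(x,y,z)=(2,2,-2)$ and pattern $(+,+,-)$. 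Since flipping the lift of $\rho(u)$ sends $(x,y,z)\mapsto(-x,y,-z)$ and flipping the lift of $\rho(v)$ sends $(x,y,z)\mapsto(x,-y,-z)$, the $\Z_2\times\Z_2$-orbit of $(+,+,-)$ is exactly the four patterns with $xyz<0$. This simultaneously shows the lifting locus lands in $\FF_P$ and pins down the group action.

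For sufficiency I would realise every $(x,y,z)\in\FF_P$ geometrically. Using the classical fact that for any prescribed boundary data $(\ell_1,\ell_2,\ell_3)\in[0,\infty)^3$ there is a unique hyperbolic pair of pants (obtained by doubling a right-angled hexagon, with $\ell_i=0$ giving a cusp), and that $2\cosh(\ell_i/2)$ sweeps out all of $[2,\infty)$, I obtain a Fuchsian representation whose peripheral traces have absolute values $|x|,|y|,|z|$. Its four lifts realise, by the orbit computation above, all four admissible sign patterns with these magnitudes; in particular $(x,y,z)$ itself is attained, and as the magnitudes vary independently over $[2,\infty)$ each of the four regions is swept out entirely, so the image is exactly $\FF_P$. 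Finally, to identify $\T(P)$ with $\FF_P/(\Z_2\times\Z_2)$, I record that on all of $\FF_P$ the Fricke quantity $\kappa=x^2+y^2+z^2-xyz$ satisfies $\kappa\ge 20$: indeed $x^2+y^2+z^2\ge 12$ while $xyz<0$ forces $-xyz=|xyz|\ge 8$. Thus $\kappa\neq 4$, so by Theorem~\ref{thm:frickevogt} and Remark~\ref{rmk:traceconditionforirreducible} the representations are irreducible and each conjugacy class is determined by its trace triple; hence the four lifts of a single $\rho$ form exactly one free $\Z_2\times\Z_2$-orbit, one point in each region, and passing to the quotient gives the stated identification.

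The step I expect to be the main obstacle is the sign determination, since an explicit enumeration of which lifts occur is combinatorially delicate; the point of the argument is that the connectedness of $\T(P)$ reduces all of this global bookkeeping to a single explicit evaluation. The second subtle point is conceptual rather than computational: one must check that the marking ``$u,v,uv$ peripheral'' genuinely selects the pair-of-pants regime, where the commutator is hyperbolic with $\tr[u,v]=\kappa-2>2$, rather than the one-holed-torus regime $\kappa<0$, which is exactly what occupies the complementary, positive-product sign patterns and must therefore be excluded.
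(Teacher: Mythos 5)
First, a point of comparison: the paper does not actually prove this theorem; it is quoted from Goldman \cite{Goldman09} without proof. So your argument is a from-scratch reconstruction, and most of it is sound and efficient. The octant constraint from non-ellipticity of the peripherals, the reduction of the sign bookkeeping to connectedness of $\T(P)\cong[0,\infty)^3$ plus triviality of the four-fold lift cover plus a single evaluation at the $\Gamma(2)$ point $(2,2,-2)$, the computation of the $\Z_2\times\Z_2$-action as $(x,y,z)\mapsto(-x,y,-z)$ and $(x,y,z)\mapsto(x,-y,-z)$, and the hexagon-doubling argument for surjectivity onto $\FF_P$ are all correct and together establish that the image of the lifting locus under $\Phi$ is exactly $\FF_P$.

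The genuine gap is in the final step, where you claim that since $\kappa\neq 4$ ``each conjugacy class is determined by its trace triple.'' Theorem \ref{thm:frickevogt} as stated produces a conjugating element $Q\in\SLtwoC$, whereas the object here is $\X(\F_2,\SLtwoR)=\Hom(\F_2,\SLtwoR)/\SLtwoR$, and on irreducible $\SLtwoR$-classes the trace map $\Phi$ is in fact two-to-one, not injective. Concretely: normalize $\rho(u)=\mathrm{diag}(\lambda,\lambda^{-1})$ with $\lambda>1$ and $\rho(v)=\left(\begin{smallmatrix}a&b\\ c&d\end{smallmatrix}\right)$; the triple $(x,y,z)$ determines $\lambda,a,d$ and the product $bc\neq 0$, but the residual $\SLtwoR$-conjugation is by $\mathrm{diag}(s,s^{-1})$, which rescales $b$ by $s^2>0$, so the sign of $b$ is an extra invariant. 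The two classes over a given triple are swapped by conjugation by $\mathrm{diag}(1,-1)\in\GLtwoR\setminus\R^*\SLtwoR$, i.e.\ they are a Fuchsian representation and its mirror, which have identical trace triples but are never $\SLtwoR$-conjugate. So to obtain the claimed identification of the lifting locus with $\FF_P$ \emph{under $\Phi$} (and hence the description of $\T(P)$ as $\FF_P/(\Z_2\times\Z_2)$ rather than by a larger group), you must additionally show that only one of these two classes over each triple is the lift of a holonomy compatible with the fixed orientation of $P$, the other being the holonomy of the mirror structure and hence excluded. This is exactly the relative Euler class / orientation point you allude to in your last paragraph but do not actually carry out; it can be patched by the same device you already use, since the sign invariant above is locally constant on the connected space of orientation-compatible Fuchsian representations and can be checked at the $\Gamma(2)$ base point.
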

In particular, we can identify $\T(P)$ to any of the four connected components of $\FF_P$. Without loss of generality, we may consider
    \[
        \Omega=(-\infty,-2]\times(-\infty,-2]\times(-\infty,-2],
    \]
and the identification from $\T(P)$ to $\Omega$ can be given as follows:
    \[
        \begin{aligned}
            \varphi:\T(P)&\rightarrow \Omega\\
                    X &\mapsto \left(-2\cosh\frac{\ell_X(\alpha_1)}{2},-2\cosh\frac{\ell_X(\alpha_2)}{2},-2\cosh\frac{\ell_X(\alpha_3)}{2}\right)
        \end{aligned}
    \]
where $\alpha_1$, $\alpha_2$ and $\alpha_3$ are the three peripheral curves on $P$ with orientation induced by that on $P$. We denote by $\rho_{(x,y,z)}$ the representation determined by $(x,y,z)\in\Omega$. Then we have the embedding of $\T(P)$ into $\X(\F_2,\SLtwoR)$:
    \[
        \begin{aligned}
            \widetilde{\varphi}:\T(P)&\rightarrow \X(\F_2,\SLtwoR),\\
                    X&\mapsto \rho_{\varphi(X )}.
        \end{aligned}
    \]

\section{A projective injection of $\T(\Sigma)$ into a Euclidean space}\label{sec:projinj}
Our first results aim to use the description of the character variety to find a projectively injective embedding of Teichm\"uller space into Euclidean spaces.

We start by studying representations from $\F_2$ to $\SLtwoR$. We observe the following:
\begin{proposition}\label{prop:traceprojectivelyinjection}
Let $M_1$, $M_2$, $N_1$ and $N_2$ be four matrices in $\SLtwoR$, where $M_1$ and $N_1$ are hyperbolic and share no common eigenvector in $\R^2$. Then
        \[
            (\tr M_1,\tr N_1,\tr M_1N_1,\tr M_1^{-1}N_1)=\lambda(\tr M_2,\tr N_2,\tr M_2N_2,\tr M_2^{-1}N_2),
        \]
    for some $\lambda\in\R$, if and only if there exists a matrix $C\in \SLtwoR$, such that $M_2=CM_1C^{-1}$ and $N_2=CN_1C^{-1}$.
\end{proposition}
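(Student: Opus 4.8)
The plan is to treat the two directions separately, with all the content in the forward direction. The reverse implication is immediate: if $M_2=CM_1C^{-1}$ and $N_2=CN_1C^{-1}$ for some $C\in\SLtwoR$, then $M_2N_2=CM_1N_1C^{-1}$ and $M_2^{-1}N_2=CM_1^{-1}N_1C^{-1}$, so conjugation-invariance of the trace makes the two quadruples literally equal, and we may take $\lambda=1$.

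For the forward direction, I would first pin down $\lambda$. The key input is the $\SLtwo$-trace identity $\tr(M^{-1}N)=\tr(M)\tr(N)-\tr(MN)$, coming from Cayley--Hamilton ($M+M^{-1}=(\tr M)I$). Writing $x_i=\tr M_i$, $y_i=\tr N_i$, $z_i=\tr M_iN_i$, $w_i=\tr M_i^{-1}N_i$, this reads $z_i+w_i=x_iy_i$ for $i=1,2$. First, $\lambda\neq 0$, since otherwise the hypothesis forces $\tr M_1=0$, contradicting that $M_1$ is hyperbolic ($|\tr M_1|>2$); as $\lambda\neq 0$ and $x_1,y_1\neq 0$, also $x_2,y_2\neq 0$. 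Substituting $x_1=\lambda x_2$, $y_1=\lambda y_2$, $z_1=\lambda z_2$, $w_1=\lambda w_2$ into $z_1+w_1=x_1y_1$ and using the $i=2$ identity gives $\lambda x_2y_2=\lambda^2 x_2y_2$, hence $\lambda=1$. Notice that the fourth coordinate carries no independent information (it is forced by the identity from the first three), but it is exactly this redundancy that rigidifies the projective scaling and rules out $\lambda\neq 1$.

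The second step converts equal traces into conjugacy. Since $M_1$ and $N_1$ share no common eigenvector, Remark \ref{rmk:traceconditionforirreducible} says the pair is irreducible, so Theorem \ref{thm:frickevogt} applies to the matching triples $(\tr M_1,\tr N_1,\tr M_1N_1)=(\tr M_2,\tr N_2,\tr M_2N_2)$ and yields $Q\in\SLtwoC$ with $M_2=QM_1Q^{-1}$ and $N_2=QN_1Q^{-1}$. More concretely, one can instead diagonalize the hyperbolic $M_1$ over $\R$, so $M_1=\mathrm{diag}(\kappa,\kappa^{-1})$ with $N_1$ having nonzero off-diagonal entries; after an $\SLtwoR$-conjugation arranging $M_2=M_1$ (possible since $\tr M_2=\tr M_1$ forces the same eigenvalues), the equations $\tr N_2=\tr N_1$ and $\tr M_1N_2=\tr M_1N_1$ force the diagonal of $N_2$ to equal that of $N_1$, while $\det N_2=1$ forces the product of its off-diagonal entries to match, and a diagonal $\SLtwoR$-conjugation then tries to match the off-diagonal entries themselves.

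The hard part, which I expect to be the main obstacle, is the descent from $\SLtwoC$ to $\SLtwoR$. Complex-conjugating $M_2=QM_1Q^{-1}$ and $N_2=QN_1Q^{-1}$ (all four matrices being real) shows that $\bar Q$ conjugates the pair exactly as $Q$ does, so $Q^{-1}\bar Q$ centralizes both $M_2$ and $N_2$; irreducibility together with Schur's lemma forces $Q^{-1}\bar Q=\mu I$, and comparing determinants gives $\mu^2=1$. If $\mu=1$ then $Q$ is already real and lies in $\SLtwoR$, finishing the proof. The delicate case is $\mu=-1$, where $Q=iR$ with $R$ real and $\det R=-1$, giving a priori only a $\GLtwoR$-conjugacy; in the diagonalized picture this is the scenario where the off-diagonal entries of $N_2$ and $N_1$ have opposite signs, so that matching them needs an orientation-reversing conjugation. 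Controlling this determinant sign is precisely where the hyperbolicity and no-common-eigenvector hypotheses must be brought to bear, and it is the crux of the argument; the natural conclusion one obtains directly is conjugacy in $\GLtwoR$, and I would focus the remaining effort on showing the conjugator can be taken in $\SLtwoR$.
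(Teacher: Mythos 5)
Your reverse direction and your derivation of $\lambda=1$ from the Cayley--Hamilton identity $\tr(M^{-1}N)=\tr M\,\tr N-\tr(MN)$ coincide with the paper's proof (your justification that $\tr M_2\tr N_2\neq 0$ is in fact cleaner: at that point the paper assumes $M_2,N_2$ hyperbolic, which is not among the hypotheses, whereas you deduce it from $\lambda\neq 0$ together with the hyperbolicity of $M_1,N_1$). The step you isolate as the crux --- descending from the $\SLtwoC$-conjugator supplied by Theorem~\ref{thm:frickevogt} to one in $\SLtwoR$ --- is exactly the step the paper elides: its proof cites Fricke--Vogt as if the theorem directly produced $C\in\SLtwoR$. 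Your Schur-lemma reduction ($Q^{-1}\bar Q=\mu I$, $\mu^2=1$; the case $\mu=-1$ giving $Q=iR$ with $R$ real of determinant $-1$) is correct and is the right way to probe what is going on.

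The reason you could not close the case $\mu=-1$ is that it cannot be closed: with $\SLtwoR$ in the conclusion the statement is false, and only $\GLtwoR$-conjugacy follows. Concretely, take $M_1=M_2=\mathrm{diag}(2,\tfrac12)$, $N_1=\left(\begin{smallmatrix}1&1\\1&2\end{smallmatrix}\right)$, $N_2=\left(\begin{smallmatrix}1&-1\\-1&2\end{smallmatrix}\right)$. All hypotheses hold and both quadruples equal $(\tfrac52,3,3,\tfrac92)$, but any $C$ with $CM_1C^{-1}=M_1$ must commute with $M_1$ and hence be diagonal (the eigenvalues $2,\tfrac12$ are distinct), and a diagonal $C$ of determinant $1$ scales the $(1,2)$-entry of $N_1$ by a positive factor; matching the opposite off-diagonal signs of $N_1$ and $N_2$ therefore forces $\det C=-1$. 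This is precisely the ``opposite off-diagonal signs'' scenario of your diagonalized picture, so hyperbolicity and irreducibility do not rescue the sign, and no further effort will. The provable (and usable) conclusion is that $(M_2,N_2)=(CM_1C^{-1},CN_1C^{-1})$ for some $C\in\GLtwoR$. For the application to $\T(P)$ this weakening is harmless, since conjugating a Fuchsian representation by an orientation-reversing element yields the mirror structure, which for a pair of pants with labelled boundary is the same point of Teichm\"uller space; but injectivity of $\Psi$ on all of $\X_{\hyp}^{\irr}(u,v)$, as stated downstream, fails for this same example.
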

\begin{proof}
    We consider a trace formula for matrices in $\SLtwoC$, which will be applied to matrices in $\SLtwoR$. More precisely, given any two matrices $M_1,N_1\in\SLtwoR$, we have
    \[
        \tr M_1\tr N_1=\tr M_1N_1+\tr M_1^{-1}N_1.
    \]
    Given any matrices $M_2,N_2\in\SLtwoR$ and for any $\lambda\in \R^*$, if 
    \[
            (\tr M_1,\tr N_1,\tr M_1N_1,\tr M_1^{-1}N_1)=\lambda(\tr M_2,\tr N_2,\tr M_2N_2,\tr M_2^{-1}N_2),
    \]
    then we have
    \[
        \lambda^2\tr M_2\tr N_2=\lambda\tr M_2N_2+\lambda\tr M_2^{-1}N_2=\lambda\tr M_2\tr N_2.
    \]
    Therefore we have
    \[
        \lambda(\lambda-1)\tr M_2\tr N_2=0.
    \]

    If $M_1$, $N_1$, $M_2$ and $N_2$ are hyperbolic, then both $\lambda$ and $\tr M_2\tr N_2$ are different from $0$, hence $\lambda=1$.
    
    Since $M_1$ and $N_1$ share no common eigenvector in $\R^2$, by the discussion in Remark \ref{rmk:traceconditionforirreducible}, we have
        \[
            (\tr M_1)^2+(\tr N_1)^2+(\tr M_1\tr N_1)^2-\tr M_1\tr N_1\tr M_1N_1\neq 4.
        \]
    By the result of Fricke and Vogt (Theorem \ref{thm:frickevogt}), we have a matrix $C\in\SLtwoR$, such that $M_2=CM_1C^{-1}$ and $N_2=CN_1C^{-1}$.

    The other direction is a consequence of the fact that the trace of a matric is preserved under conjugation. 
\end{proof}

Consider the basis $(u,v)$ of $\F_2$, and the subset of $\X(\F_2,\SLtwoR)$ consisting of all irreducible representations sending $u$, $v$, $uv$ and $u^{-1}v$ to hyperbolic elements: we denote it by $\X_{\hyp}^{\irr}(u,v)$.

The condition on the image of $u$ and $v$ tells us that $\X_{\hyp}^{\irr}(u,v)$ is an open subset of $\X(\F_2,\SLtwoR)$. On the other hand, this subset is not $\Out(\F_2)$-invariant. For example, a representation associated to a pair of pants with one boundary realized as a cone point singularity is such an example. 
\begin{remark}
    The subset $\X_{\hyp}^{\irr}(u,v)$ of $\X(\F_2,\SLtwoR)$ is not $\Out(\F_2)$-invariant. One may consider a point $[\rho]\in\X(\F_2,\SLtwoR)$ associated to the hyperbolic thrice-punctured sphere. If we identify $\F_2$ with $\pi_1(P)$ in the way that one of $u$, $v$, $uv$ and $u^{-1}v$ corresponds to a boundary component of $P$, then its $\rho$-image is a parabolic element. Hence, $[\rho]$ is not in $\X_{\hyp}^{\irr}(u,v)$. On the other hand, we can identify $\F_2$ with $\pi_1(P)$ such that $u$, $v$, $uv$ and $u^{-1}v$ all correspond to interior curves, and then $[\rho]\in \X_{\hyp}^{\irr}(u,v)$.
\end{remark}

As a corollary of the above proposition, we have:
\begin{corollary}
    The continuous map
        \[
            \begin{aligned}
                \Psi: \X_{\hyp}^{\irr}(u,v)&\rightarrow \R^4\\
                        [\rho]&\mapsto (\tr\rho(u),\tr\rho(v),\tr\rho(uv),\tr\rho(u^{-1}v))
            \end{aligned}
        \]
    is projectively injective.
\end{corollary}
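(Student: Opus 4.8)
The plan is to deduce this directly from Proposition \ref{prop:traceprojectivelyinjection}, so the argument is essentially a matter of matching hypotheses. First I would record that $\Psi$ never takes the value $0$: every entry of $\Psi([\rho])$ is the trace of a hyperbolic element of $\SLtwoR$, hence has absolute value strictly greater than $2$. In particular $\Psi([\rho])\neq 0$ for all $[\rho]\in\X_{\hyp}^{\irr}(u,v)$, so $\Psi$ descends to a well-defined map into the projective space $\R P^3$, and any scalar $\lambda$ relating two image vectors is automatically nonzero. Continuity is immediate, since traces are polynomial in the matrix entries and the quotient map $\Hom(\F_2,\SLtwoR)\to\X(\F_2,\SLtwoR)$ is continuous.

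For projective injectivity, suppose $\Psi([\rho_1])=\lambda\Psi([\rho_2])$ for some (necessarily nonzero) $\lambda\in\R$. Writing $M_i=\rho_i(u)$ and $N_i=\rho_i(v)$ for $i=1,2$, the identities $\rho_i(uv)=M_iN_i$ and $\rho_i(u^{-1}v)=M_i^{-1}N_i$ turn this equality into
\[
(\tr M_1,\tr N_1,\tr M_1N_1,\tr M_1^{-1}N_1)=\lambda(\tr M_2,\tr N_2,\tr M_2N_2,\tr M_2^{-1}N_2),
\]
which is precisely the hypothesis of the Proposition. It then remains to check that $M_1$ and $N_1$ satisfy the standing assumptions there: by the definition of $\X_{\hyp}^{\irr}(u,v)$ the elements $\rho_1(u)=M_1$ and $\rho_1(v)=N_1$ are hyperbolic, and irreducibility of $\rho_1$ is, by the discussion in Remark \ref{rmk:traceconditionforirreducible}, exactly the statement that $M_1$ and $N_1$ share no common eigenvector in $\R^2$.

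With these hypotheses in place, the Proposition supplies a matrix $C\in\SLtwoR$ with $M_2=CM_1C^{-1}$ and $N_2=CN_1C^{-1}$. Since $u$ and $v$ generate $\F_2$, the representations $\rho_2$ and $C\rho_1C^{-1}$ agree on a generating set and hence coincide, so $[\rho_1]=[\rho_2]$ in $\X(\F_2,\SLtwoR)$, which is what projective injectivity demands. I do not expect any genuine obstacle here: all the substantive work is carried out in Proposition \ref{prop:traceprojectivelyinjection} through the Fricke--Vogt theorem, and the only points requiring care are confirming that the hypotheses on $M_1,N_1$ are guaranteed by the very definition of $\X_{\hyp}^{\irr}(u,v)$, and that the scalar $\lambda$ cannot vanish.
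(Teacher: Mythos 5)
Your proposal is correct and matches the paper's intent exactly: the paper states this as an immediate consequence of Proposition \ref{prop:traceprojectivelyinjection} without further argument, and your write-up supplies precisely that deduction (hypotheses verified via the definition of $\X_{\hyp}^{\irr}(u,v)$ and Remark \ref{rmk:traceconditionforirreducible}, nonvanishing of $\Psi$ to handle $\lambda$, and agreement on generators to upgrade conjugacy of $(M_i,N_i)$ to conjugacy of representations). No issues.
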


Given any hyperbolic element $M\in \SLtwoR$, it acts on $\Hyp^2$ by a fractional linear map and has positive translation distance
    \[
        d(M):=\inf\{\mathrm{d}_\Hyp(p, M(p))\mid p\in\Hyp \}>0.
    \]
As usual, we also have
    \[
        |\tr M|=2\cosh\frac{d(M)}{2}.
    \]
Next we would like to prove the following result.
\begin{proposition}\label{prop:projectiveinjectionalgebraic}
    The continuous map
        \[
            \begin{aligned}
                \widetilde{\Psi}: \X_{hyp}^{irr}(u,v)&\rightarrow \R^4\\
                        [\rho]&\mapsto (d(\tr\rho(u)),d(\tr\rho(v)),d(\tr\rho(uv)),d(\tr\rho(u^{-1}v)))
            \end{aligned}
        \]
    is projectively injective.
\end{proposition}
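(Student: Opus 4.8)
The plan is to reduce the statement to the trace version already recorded in Proposition \ref{prop:traceprojectivelyinjection}, using the dictionary between translation distances and absolute traces. For a hyperbolic $M\in\SLtwoR$ one has $|\tr M|=2\cosh(d(M)/2)$, so $d(M)=2\arccosh(|\tr M|/2)$ defines a strictly increasing bijection between $|\tr M|\in[2,\infty)$ and $d(M)\in[0,\infty)$. Hence $\widetilde{\Psi}$ is the composition of the ``absolute trace'' map $[\rho]\mapsto(|\tr\rho(u)|,|\tr\rho(v)|,|\tr\rho(uv)|,|\tr\rho(u^{-1}v)|)$ with a fixed coordinatewise monotone homeomorphism, and the problem becomes: recover $[\rho]$ from its four translation distances, up to a common positive scalar. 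Continuity is immediate from this factorization.

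I would first reduce to the case where the proportionality factor equals $1$. Suppose $\widetilde{\Psi}([\rho_1])=\lambda\,\widetilde{\Psi}([\rho_2])$ with $\lambda>0$, and write $a_i$ for the half-distances of $\rho_2$, so that those of $\rho_1$ are $\lambda a_i$. The only algebraic constraint linking the four coordinates is the Fricke identity $\tr\rho(u)\tr\rho(v)=\tr\rho(uv)+\tr\rho(u^{-1}v)$ used in Proposition \ref{prop:traceprojectivelyinjection}. Substituting $|\tr\cdot|=2\cosh(\cdot)$ and recording the signs $\eps_i\in\{\pm1\}$ of the traces, this reads $2\eps_1\eps_2\cosh a_1\cosh a_2=\eps_3\cosh a_3+\eps_4\cosh a_4$ for $\rho_2$, and the same relation at scale $\lambda$ for $\rho_1$. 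Applying the product-to-sum formula, each relation becomes the vanishing of an exponential polynomial $g(s)=\sum_k\delta_k\cosh(sb_k)$ in a scaling variable $s$, with $\delta_k\in\{\pm1\}$ and exponents $b_k\in\{a_1+a_2,\,|a_1-a_2|,\,a_3,\,a_4\}$; when the two representations share a sign pattern this is a single function with $g(1)=g(\lambda)=0$. A Descartes/P\'olya-type bound on the number of positive zeros of a signed sum of hyperbolic cosines — where the hyperbolicity hypotheses $|\tr|>2$ force $g\not\equiv0$ and constrain the admissible signs to finitely many configurations — then forces $\lambda=1$. I expect this zero-counting step, together with the finite sign bookkeeping it requires, to be the main obstacle.

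Once $\lambda=1$, the four absolute traces of $\rho_1$ and $\rho_2$ agree, and it remains to upgrade equality of absolute traces to conjugacy. Here I would invoke the Fricke identity a second time: given the four magnitudes (all $>2$) together with $\tr\rho(u)\tr\rho(v)=\tr\rho(uv)+\tr\rho(u^{-1}v)$, the admissible sign patterns are restricted enough that the triple $(\tr\rho(u),\tr\rho(v),\tr\rho(uv))$ is determined up to the lift ambiguity $\SLtwoR\to\PSLtwoR$, i.e. up to the $\Z_2\times\Z_2$ sign-change action of Theorem \ref{thm:liftspairofpants} (which is precisely the ambiguity invisible to the $\PSLtwoR$-invariant distances). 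It is essential that both $\rho_1$ and $\rho_2$ satisfy the relation: for example a global reversal of the whole quadruple is incompatible with it, since it would force $\tr\rho(u)\tr\rho(v)=0$, impossible for hyperbolic elements. After matching the trace triples, Proposition \ref{prop:traceprojectivelyinjection} (equivalently the Fricke--Vogt rigidity of Theorem \ref{thm:frickevogt} for irreducible representations) produces a conjugating element $C\in\SLtwoR$, giving $[\rho_1]=[\rho_2]$ and hence the asserted projective injectivity.
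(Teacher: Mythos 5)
Your overall strategy is the paper's: factor $\widetilde{\Psi}$ through absolute traces via $|\tr M|=2\cosh(d(M)/2)$, feed the Fricke identity $\tr\rho(u)\tr\rho(v)=\tr\rho(uv)+\tr\rho(u^{-1}v)$ through the product-to-sum formula to obtain a signed relation among $\cosh\frac{d_u+d_v}{2}$, $\cosh\frac{d_u-d_v}{2}$, $\cosh\frac{d_{uv}}{2}$, $\cosh\frac{d_{u^{-1}v}}{2}$, show that the rescaled relation admits $t=1$ as its only positive solution so that $\lambda=1$, and finish with the rigidity of Proposition \ref{prop:traceprojectivelyinjection}. Your side observations (continuity from the factorization, impossibility of a global sign reversal of the quadruple) are correct.

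The genuine gap is exactly the step you defer. Invoking ``a Descartes/P\'olya-type bound on the number of positive zeros of a signed sum of hyperbolic cosines'' is not a proof, and the generic such bound is not strong enough: a signed sum of four terms $\cosh(b_kt)$ is an exponential polynomial in up to eight exponentials, so P\'olya--Descartes counting only gives at most seven real zeros, nowhere near the ``at most one positive zero'' you need. What makes the argument work is the specific structure of the exponents and signs coming from the Fricke relation. The paper runs a four-way case analysis on the relative signs $\epsilon,\epsilon'\in\{\pm1\}$, rules out two sign patterns outright, handles the mixed pattern using the strict monotonicity of $t\mapsto\cosh(tx)/\cosh(ty)$ for $x>y\ge0$ (Lemma \ref{lem:techlem}), and handles the all-positive pattern with Lemma \ref{lem:projectiveinjectiontechlem}, whose hypothesis is a \emph{dominance} condition $x_1>y_k$ on the exponents rather than a sign count. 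You would also need to treat the case where $\rho_1$ and $\rho_2$ do not share a sign pattern, which you explicitly set aside: there $t=1$ and $t=\lambda$ are zeros of two \emph{different} signed sums, and the single-function zero count does not apply as stated. So the skeleton matches the paper, but the mathematical content of the proposition lives precisely in the step you have left open.
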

To prove this proposition, we start by showing two lemmas.    
\begin{lemma}\label{lem:techlem}
For any real numbers $x$ and $y$ with $x>y\ge0$, the function
    \[
        \frac{\cosh tx}{\cosh ty}
    \]
is strictly increasing for $t\in\R_{>0}$.
\end{lemma}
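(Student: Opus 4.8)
The plan is to show that the \emph{logarithm} of the given function is strictly increasing, which is equivalent since $\cosh > 0$ forces $f(t) := \cosh(tx)/\cosh(ty) > 0$, and which is more convenient because it replaces the quotient by a difference. From $\log f(t) = \log\cosh(tx) - \log\cosh(ty)$ one obtains
\[
\frac{d}{dt}\log f(t) = x\tanh(tx) - y\tanh(ty),
\]
so it suffices to prove that this quantity is strictly positive for every $t > 0$.

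To this end I would reparametrize by setting $s = tx$ and $r = ty$. Since $t > 0$ and $x > y \ge 0$ we have $s > r \ge 0$, and the displayed derivative equals $\tfrac{1}{t}\bigl(s\tanh s - r\tanh r\bigr)$. As $t > 0$, its sign is governed entirely by the auxiliary function $\phi(s) := s\tanh s$ on $[0,\infty)$, so the crux of the lemma is the monotonicity of $\phi$.

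Finally I would verify that $\phi$ is strictly increasing on $(0,\infty)$ by a direct derivative computation: $\phi'(s) = \tanh s + s/\cosh^2 s$, and both summands are strictly positive for $s > 0$. Together with $\phi(0) = 0$ this yields $\phi(s) > \phi(r)$ whenever $s > r \ge 0$, hence $x\tanh(tx) - y\tanh(ty) > 0$ for all $t > 0$, and the lemma follows.

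I do not anticipate a genuine obstacle here, as the argument reduces to elementary calculus. The only points requiring a little care are the degenerate case $y = 0$, where $f(t) = \cosh(tx)$ is manifestly strictly increasing for $x > 0$, and the \emph{strictness} of the inequality when $r = 0$; the latter holds because $\phi(s) > 0 = \phi(0)$ for every $s > 0$.
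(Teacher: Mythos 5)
Your argument is correct. The paper proves the same statement by differentiating the quotient directly and then adding and subtracting $x\cosh(tx)\sinh(ty)$ in the numerator so as to invoke the identity $\sinh(tx)\cosh(ty)-\cosh(tx)\sinh(ty)=\sinh(t(x-y))$, which exhibits the derivative as
\[
\frac{x\sinh\bigl(t(x-y)\bigr)+(x-y)\cosh(tx)\sinh(ty)}{\cosh^2(ty)}>0 .
\]
You instead pass to the logarithmic derivative $x\tanh(tx)-y\tanh(ty)=\tfrac{1}{t}\bigl(\phi(tx)-\phi(ty)\bigr)$ and reduce everything to the strict monotonicity of the single auxiliary function $\phi(s)=s\tanh s$ on $[0,\infty)$, verified from $\phi'(s)=\tanh s+s/\cosh^2 s>0$. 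Both routes are elementary and complete; the paper's version buys a closed-form positive expression for the derivative itself, while yours isolates a reusable one-variable monotonicity statement and avoids the slightly ad hoc add-and-subtract step. Your handling of the edge cases ($y=0$, and strictness when $r=0$) is careful and correct, though as you note the general argument already covers them.
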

\begin{proof}
    We compute the derivative of the following function
        \[
            \frac{\cosh tx}{\cosh ty}
        \]
    of variable $t$, and get
        \[
            \begin{aligned}
                \left(\frac{\cosh tx}{\cosh ty}\right)'&=\frac{x\sinh tx\cosh ty-x\cosh tx\sinh ty+x\cosh tx\sinh ty-y\cosh tx\sinh ty}{\cosh^2 ty}\\
                &=\frac{x\sinh t(x-y)+(x-y)\cosh tx\sinh ty}{\cosh^2 ty}>0
            \end{aligned}
        \]
    for all $t\in\R_{>0}$. Hence the lemma.    
\end{proof}
We borrow the second technical lemma from \cite{McShane-Parlier08}:
 \begin{lemma}\label{lem:projectiveinjectiontechlem}
        	Let $x_1,x_2,y_1,\dots,y_n$ be positive numbers satisfying $x_1>y_k$ for all $k\in\{1,\dots,n\}$. Then the following real function defined for $t\in\mathbb{R}_{>0}$
        		\[
        			\cosh(x_1t)+\cosh(x_2t)-\sum_{k=1}^{n}\cosh(y_kt).
        		\]
            has at most one strictly positive zero.
\end{lemma}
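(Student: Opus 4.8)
The plan is to prove the stronger statement that every strictly positive zero $t_0$ of
$f(t):=\cosh(x_1t)+\cosh(x_2t)-\sum_{k=1}^n\cosh(y_kt)$ is a strict \emph{upcrossing}, i.e.\ $f'(t_0)>0$. Since $f$ is real-analytic and not identically zero (indeed $\cosh(x_1t)$ dominates every $\cosh(y_kt)$, so $f(t)\to+\infty$), its zeros are isolated; moreover two consecutive zeros cannot both be upcrossings, because an upcrossing at the smaller one forces $f>0$ on the open interval between them, making the larger one a downcrossing. Hence the upcrossing property immediately yields at most one positive zero, and the whole task reduces to controlling the sign of $f'$ at zeros.

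First I would dispose of the easy case $x_2\ge x_1$ directly, without even invoking upcrossings. Dividing by $\cosh(x_1t)>0$, write $f(t)/\cosh(x_1t)=1+\frac{\cosh(x_2t)}{\cosh(x_1t)}-\sum_{k=1}^n\frac{\cosh(y_kt)}{\cosh(x_1t)}$. By Lemma~\ref{lem:techlem} the quotient $\cosh(x_2t)/\cosh(x_1t)$ is non-decreasing (as $x_2\ge x_1$), while each $\cosh(y_kt)/\cosh(x_1t)$ is strictly decreasing (as $x_1>y_k$); therefore the right-hand side is strictly increasing on $\R_{>0}$ and so vanishes at most once.

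The substantive case is $x_2<x_1$. Fix a zero $t_0>0$ and, for a frequency $c\ge 0$, set $C_c:=\cosh(ct_0)$ and $S_c:=c\sinh(ct_0)$. The zero condition is exactly $C_{x_1}+C_{x_2}=\sum_k C_{y_k}$, and since $f'(t)=x_1\sinh(x_1t)+x_2\sinh(x_2t)-\sum_k y_k\sinh(y_kt)$, the desired inequality $f'(t_0)>0$ is precisely $S_{x_1}+S_{x_2}>\sum_k S_{y_k}$. The key structural fact I would establish is that along the curve $c\mapsto(C_c,S_c)$ the value $S$ is a \emph{strictly convex increasing} function of $C$: a short computation (in the same spirit as Lemma~\ref{lem:techlem}) gives $dS/dC=\tfrac1{t_0}+c\coth(ct_0)$, which is increasing in $c$, hence increasing in $C$. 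Because $0<y_k<x_1$ and $0<x_2<x_1$, all of the quantities $C_{y_k}$ and $C_{x_2}$ lie in the interval $[1,C_{x_1}]$.

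So the crux — and the step I expect to be the main obstacle — is a convexity/majorization estimate: among all tuples of points in $[1,C_{x_1}]$ with total mass $C_{x_1}+C_{x_2}$, the convex functional $\sum_k S(C_{y_k})$ is maximized by pushing the points to the endpoints $1$ and $C_{x_1}$ of the allowed range (a convex function on a polytope is maximized at a vertex; equivalently, spreading two interior coordinates apart never decreases a convex sum). Since the total mass is $C_{x_1}+C_{x_2}<2C_{x_1}$, at most one coordinate can sit at $C_{x_1}$, and a direct check of the two resulting extremal configurations shows the maximal value is at most $S(C_{x_1})+S(C_{x_2})$. Strict convexity, together with the fact that our actual $C_{y_k}$ all lie strictly below $C_{x_1}$, upgrades this to the strict inequality $\sum_k S(C_{y_k})<S(C_{x_1})+S(C_{x_2})$, which is exactly $f'(t_0)>0$. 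Making the extremal/vertex argument fully rigorous (and confirming the strictness) is where the real care is needed; by contrast the strict convexity of $S$ in $C$ and the reduction to it are comparatively routine.
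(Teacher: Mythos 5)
The paper does not actually prove this lemma --- it is imported verbatim from \cite{McShane-Parlier08} --- so there is no in-paper argument to measure yours against; what matters is whether your self-contained proof is correct, and as far as I can check it is. The reduction to ``$f'(t_0)>0$ at every positive zero'' is sound: $f$ is a nonzero real-analytic function, so its zeros are isolated, and two consecutive strict upcrossings are impossible. The case $x_2\ge x_1$ via Lemma \ref{lem:techlem} is immediate. In the substantive case $x_2<x_1$ your key computation is right: with $C_c=\cosh(ct_0)$ and $S_c=c\sinh(ct_0)$ one gets $dS/dC=\tfrac{1}{t_0}+c\coth(ct_0)$, and $\tfrac{d}{dc}\bigl(c\coth(ct_0)\bigr)=\bigl(\tfrac12\sinh(2ct_0)-ct_0\bigr)/\sinh^2(ct_0)>0$, so $S$ is a strictly convex, strictly increasing function of $C$ on $[1,\infty)$ vanishing at $C=1$. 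The majorization step also checks out: on the compact polytope $\bigl\{(C_1,\dots,C_n)\in[1,C_{x_1}]^n:\sum_kC_k=C_{x_1}+C_{x_2}\bigr\}$ the convex functional $\sum_kS(C_k)$ is maximized at a vertex; since $C_{x_1}+C_{x_2}<2C_{x_1}$ at most one coordinate equals $C_{x_1}$, and in either extremal configuration the single free coordinate is at most $C_{x_2}$ (respectively at most $C_{x_1}$), so the maximum is at most $S(C_{x_1})+S(C_{x_2})$. For strictness, note first that the existence of a zero forces $n\ge2$ (for $n\le1$ the relation $C_{x_1}+C_{x_2}=\sum_kC_{y_k}$ with every $C_{y_k}<C_{x_1}$ is impossible, so the lemma is vacuous there); then all $C_{y_k}$ lie in the open interval $(1,C_{x_1})$, the point is not a vertex, and perturbing two coordinates in opposite directions strictly increases the strictly convex functional. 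That perturbation is the one spot I would ask you to write out in full, but it does work. Your argument is independent of the reference and has the virtue of handling the delicate configuration $x_2<x_1$ --- where the naive ratio-monotonicity estimates of Lemma \ref{lem:techlem} pull in the wrong direction --- by an explicit, quantitative convexity inequality at the zero itself.
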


We denote by $\epsilon_u,\epsilon_v,\epsilon_{uv},\epsilon_{u^{-1}v}\in\{1,-1\}$ the signs of $\tr\rho(u)$, $\tr\rho(v)$, $\tr\rho(uv)$ and $\tr\rho(u^{-1}v)$, and we have
    \[
        2\epsilon_u\cosh\frac{d(\tr\rho(u))}{2}\epsilon_v\cosh\frac{d(\tr\rho(v))}{2}=\epsilon_{uv}\cosh\frac{d(\tr\rho(uv))}{2}+\epsilon_{u^{-1}v}\cosh\frac{d(\tr\rho(u^{-1}v))}{2}.
    \]
By standard trigonometric formulas, we can deduce
    \[
        \begin{aligned}
            &\epsilon_u\epsilon_v\left(\cosh\frac{d(\tr\rho(u))+d(\tr\rho(v))}{2}+\cosh\frac{d(\tr\rho(u))-d(\tr\rho(v))}{2}\right)\\
            =&\epsilon_{uv}\cosh\frac{d(\tr\rho(uv))}{2}+\epsilon_{u^{-1}v}\cosh\frac{d(\tr\rho(u^{-1}v))}{2}.
        \end{aligned}
    \]

We now observe the following:

\begin{proposition}
    The equation
        \[
        \begin{aligned}
            &\epsilon_u\epsilon_v\left(\cosh t\frac{d(\tr\rho(u))+d(\tr\rho(v))}{2}+\cosh t\frac{d(\tr\rho(u))-d(\tr\rho(v))}{2}\right)\\
            =&\epsilon_{uv}\cosh t\frac{d(\tr\rho(uv))}{2}+\epsilon_{u^{-1}v}\cosh t\frac{d(\tr\rho(u^{-1}v))}{2}
        \end{aligned}
    \]
    has a unique solution $t=1$ in $\R_{>0}$.
\end{proposition}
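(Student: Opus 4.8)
The plan is to repackage the equation as the vanishing of a single even function of $t$ and to show it has at most one zero on $(0,\infty)$; since the identity displayed just before the statement says exactly that $t=1$ is a zero, uniqueness follows. Abbreviate $d_u:=d(\tr\rho(u))$ and likewise $d_v,d_{uv},d_{u^{-1}v}$ (all positive, as the four elements are hyperbolic), and set $p:=\tfrac{d_u+d_v}{2}$, $q:=\tfrac{|d_u-d_v|}{2}$, $r:=\tfrac{d_{uv}}{2}$, $s:=\tfrac{d_{u^{-1}v}}{2}$, so that $p>q\ge0$ and $r,s>0$. Multiplying through by the nonzero constant $\epsilon_u\epsilon_v$ turns the equation into $g(t)=0$ with
\[
  g(t):=\cosh(pt)+\cosh(qt)-\delta_1\cosh(rt)-\delta_2\cosh(st),\qquad \delta_1:=\epsilon_u\epsilon_v\epsilon_{uv},\ \ \delta_2:=\epsilon_u\epsilon_v\epsilon_{u^{-1}v}.
\]
Then $g(1)=0$ reads $\cosh p+\cosh q=\delta_1\cosh r+\delta_2\cosh s$, whose left side exceeds $2$; this rules out $\delta_1=\delta_2=-1$, leaving three patterns. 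I would also record that irreducibility forbids the equalities $d_{uv}=d_u+d_v$ and $d_{u^{-1}v}=d_u+d_v$ (each would force the axes of $\rho(u)$ and $\rho(v)$ to coincide, hence a common eigenvector), so that $p\neq r$ and $p\neq s$; in particular $g\not\equiv0$.

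In the balanced pattern $\delta_1=\delta_2=1$ one has $g(t)=\cosh(pt)+\cosh(qt)-\cosh(rt)-\cosh(st)$, with positive-sign frequencies $\{p,q\}$ and negative-sign frequencies $\{r,s\}$. Setting $M:=\max\{p,q,r,s\}$ and recalling $p>q$, the maximum lies in $\{p,r,s\}$; I orient $g$ (replacing $g$ by $-g$ if $M\in\{r,s\}$) so that $M$ belongs to the two positive coshes. Global maximality of $M$ together with $p\neq r,s$ from the previous step makes $M$ strictly larger than both subtracted frequencies, so Lemma~\ref{lem:projectiveinjectiontechlem} applies and $g$ has at most one positive zero.

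In each mixed pattern the relation $g(1)=0$ pins down a strictly dominant frequency. For $(\delta_1,\delta_2)=(1,-1)$ it gives $\cosh r=\cosh p+\cosh q+\cosh s$, so $r>p,q,s$, and
\[
  -g(t)=\cosh(rt)-\cosh(pt)-\cosh(qt)-\cosh(st);
\]
symmetrically $s$ dominates when $(\delta_1,\delta_2)=(-1,1)$. Dividing by the dominant cosh, say $\cosh(rt)$, rewrites this as $1$ minus a sum of ratios $\cosh(\cdot\,t)/\cosh(rt)$, each of which is strictly decreasing on $t>0$ by Lemma~\ref{lem:techlem} (whose hypothesis $x>y\ge0$ even permits the frequency $q=0$). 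The quotient is therefore strictly increasing, so it, and hence $g$, has at most one positive zero.

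Together the three cases show that $g$ has at most one positive zero, which must be $t=1$. The main obstacle is the bookkeeping above: one must verify that in every admissible sign pattern there is either a strictly dominant frequency (mixed cases) or a dominant positive-sign pair (balanced case), so that exactly one of the two lemmas applies, and this is precisely where irreducibility enters, to forbid the degeneracies $p=r$ and $p=s$. The one borderline configuration is $d_u=d_v$ in the balanced pattern with $p$ itself maximal: here $\cosh(qt)=1$ is a constant summand lying outside the positive-frequency hypothesis of Lemma~\ref{lem:projectiveinjectiontechlem}, and I would instead argue directly, using that $g$ is even with a double zero at the origin, to conclude again that there is at most one positive zero.
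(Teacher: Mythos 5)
Your strategy is essentially the paper's: reduce to a four-frequency equation $\cosh(pt)+\cosh(qt)=\delta_1\cosh(rt)+\delta_2\cosh(st)$, discard $\delta_1=\delta_2=-1$, treat the mixed sign patterns by dividing by the dominant hyperbolic cosine and invoking Lemma~\ref{lem:techlem}, and treat the balanced pattern with Lemma~\ref{lem:projectiveinjectiontechlem}. Your mixed-case treatment is in fact a little cleaner than the paper's (you extract a strictly dominant frequency directly from $g(1)=0$ instead of splitting into the subcases $\ell_1\ge\ell_3$ and $\ell_1\le\ell_3$), and your insistence on verifying the hypotheses of Lemma~\ref{lem:projectiveinjectiontechlem} in the balanced case --- that one positive-sign frequency strictly dominates, and that $q$ might vanish --- addresses points the paper passes over silently.

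The one step that does not survive scrutiny is your justification that $p\neq r$ and $p\neq s$. It is \emph{not} true that irreducibility forbids $d(\tr\rho(uv))=d(\tr\rho(u))+d(\tr\rho(v))$: two hyperbolic elements with \emph{disjoint} axes, opposite translation directions and a suitably large axis distance realize exactly this equality (the product then has trace $-2\cosh\frac{d_u+d_v}{2}$ relative to lifts with positive traces), so the length equality alone does not produce a common eigenvector. What saves you is the sign information you have already fixed: in the balanced pattern $\epsilon_u\epsilon_v\epsilon_{uv}=1$, the equality $p=r$ forces $x^2+y^2+z^2-xyz=4$ for $(x,y,z)=(\tr\rho(u),\tr\rho(v),\tr\rho(uv))$ --- a one-line computation from $2\cosh A\cosh B=\cosh(A+B)+\cosh(A-B)$, since the sign condition makes $xyz=8\cosh\frac{d_u}{2}\cosh\frac{d_v}{2}\cosh\frac{d_u+d_v}{2}$ positive --- and this contradicts irreducibility by Remark~\ref{rmk:traceconditionforirreducible}. (Equivalently: $p=r$ together with $g(1)=0$ in the balanced pattern forces $q=s$ and $g\equiv 0$, which is precisely the reducible locus.) So state the exclusion as conditional on the balanced signs, not as a consequence of irreducibility alone. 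With that repair the argument closes, except for the borderline $q=0$ subcase of the balanced pattern, which you correctly flag but only sketch; it does need an actual argument, since Lemma~\ref{lem:projectiveinjectiontechlem} as stated requires all the frequencies to be positive.
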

\begin{proof}
    Without loss of generality, we may assume that $d(\tr\rho(u))-d(\tr\rho(v))\ge 0$. To simply notation, we consider the following setup
        \[
            \cosh t\ell_1+\cosh t\ell_2=\epsilon\cosh t\ell_3+\epsilon'\cosh t\ell_4,
        \]
    with  $\ell_1>\ell_2\ge 0$, $\ell_3\ge \ell_4>0$ and $\epsilon,\epsilon'\in\{1,-1\}$ which we analyze case by case.

    Notice that $\epsilon=\epsilon'=-1$ is impossible. If $\epsilon=1$ and $\epsilon'=-1$, we have
        \[
            \cosh t\ell_1+\cosh t\ell_2+\cosh t\ell_4=\cosh t\ell_3.
        \]
    If $\ell_1\ge \ell_3$, then we consider
        \[
            \frac{\cosh t\ell_1}{\cosh t\ell_3}+\frac{\cosh t\ell_2}{\cosh t\ell_3}+\frac{\cosh t\ell_4}{\cosh t\ell_3}=1.
        \]
    By Lemma \ref{lem:techlem}, we have
        \[
            \frac{\cosh t\ell_1}{\cosh t\ell_3}\ge\lim_{t\rightarrow 0}\frac{\cosh t\ell_1}{\cosh t\ell_3}=1.
        \]
    Since $\frac{\cosh t\ell_2}{\cosh t\ell_3}+\frac{\cosh t\ell_4}{\cosh t\ell_3}$ is positive for all $t>0$, we have 
        \[
            \frac{\cosh t\ell_1}{\cosh t\ell_3}+\frac{\cosh t\ell_2}{\cosh t\ell_3}+\frac{\cosh t\ell_4}{\cosh t\ell_3}>1,
        \]
    and hence this case is impossible. 

    If $\ell_1\le \ell_3$, we come back to 
        \[
            \frac{\cosh t\ell_1}{\cosh t\ell_3}+\frac{\cosh t\ell_2}{\cosh t\ell_3}+\frac{\cosh t\ell_4}{\cosh t\ell_3}=1.
        \]
    By Lemma \ref{lem:techlem}, the first term and the last term on the left hand side are both decreasing, and the second term is strictly decreasing. Hence, equality holds for at most one $t$.
    
    If $\epsilon=-1$ and $\epsilon'=1$, we have
        \[
            \cosh t\ell_1+\cosh t\ell_2+\cosh t\ell_3=\cosh t\ell_4,
        \]
    and since $\cosh t\ell_3\ge \cosh t\ell_4$, and the first two terms are both positive, this case is impossible.
    
   If $\epsilon=1$ and $\epsilon'=1$, we have
        \[
            \cosh t\ell_1+\cosh t\ell_2=\cosh t\ell_3+\cosh t\ell_4.
        \]
    Using Lemma \ref{lem:projectiveinjectiontechlem}, we have the result in this case.    
\end{proof}

The space $\X_{\hyp}^{\irr}(u,v)$ contains copies of the following spaces:
\begin{itemize}
    \item the Teichm\"uller space $\T(P)$ of a pair of pants;
    \item the Teichm\"uller space $\T(\Sigma_{1,1})$ of one-holed torus;
    \item the Teichm\"uller space $\T^p(\Sigma_{1,1})$ of once-punctured torus;
    \item the Teichm\"uller space $\T^c(\Sigma_{1,1})$ of a torus with a cone-point.
\end{itemize}
The map $\widetilde{\Psi}$ in Proposition \ref{prop:projectiveinjectionalgebraic} induces projective injections from any of the above spaces to $\R^4$. We will describe precisely this projective injection for $\T(P)$ in the following.

Recall the map 
    \[
        \begin{aligned}
            \widetilde{\varphi}:\T(P)&\rightarrow \X(\F_2,\SLtwoR)\\
                                X &\mapsto [\rho_X]
        \end{aligned}
    \]
Let $(a,b)$ be the basis of $\F_2$, such that $a$, $b$ and $ab$ correspond to the three boundary components $\alpha_1$, $\alpha_2$ and $\alpha_3$. Assume that the basis $(u,v)$ of $\F_2$ is such that no elements among $u$, $v$, $uv$ and $u^{-1}v$ are conjugate to any of $a^{\pm1}$, $b^{\pm1}$ and $(ab)^{\pm1}$. Let $[\beta_1]$, $[\beta_2]$, $[\beta_3]$ and $[\beta_4]$ be the four homotopy classes of curves on $P$ associated to $u$, $v$, $uv$ and $u^{-1}v$ respectively. 
\begin{corollary}\label{cor:pairofpantsprojectiveinjection}
    The map
		\[
			\begin{aligned}
				\widetilde{\Psi}\circ \widetilde{\varphi}: \T(P)&\rightarrow \R^4\\
					X &\mapsto \left(\ell_{X}([\beta_1]),\ell_{X}([\beta_2]),\ell_X([\beta_3]),\ell_X([\beta_4])\right)
			\end{aligned}
		\]
    is projectively injective.
\end{corollary}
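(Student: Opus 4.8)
The plan is to deduce the corollary directly from the two maps already analyzed: the embedding $\widetilde{\varphi}$ of $\T(P)$ into the character variety, and the projectively injective map $\widetilde{\Psi}$ of Proposition \ref{prop:projectiveinjectionalgebraic}. The essential observations are that the composition $\widetilde{\Psi}\circ\widetilde{\varphi}$ is well-defined and that its four coordinate functions are precisely the length functions $\ell_X([\beta_i])$; once these are in place, projective injectivity is formal.

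First I would check that $\widetilde{\varphi}(\T(P))$ lands in the domain $\X_{\hyp}^{\irr}(u,v)$ of $\widetilde{\Psi}$, which is where the hypotheses on the basis $(u,v)$ are used. For any $X\in\T(P)$ the representation $\rho_X$ is discrete and faithful, and since $\pi_1(P)$ is non-abelian the representation $\rho_X$ is irreducible; equivalently, by Remark \ref{rmk:traceconditionforirreducible}, $\rho_X(u)$ and $\rho_X(v)$ share no common eigenvector. Moreover, because none of $u$, $v$, $uv$, $u^{-1}v$ is conjugate to $a^{\pm1}$, $b^{\pm1}$ or $(ab)^{\pm1}$, the associated curves $\beta_1,\dots,\beta_4$ are essential and non-peripheral, hence each is realized by an interior closed geodesic of positive length. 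Consequently $\rho_X(u)$, $\rho_X(v)$, $\rho_X(uv)$ and $\rho_X(u^{-1}v)$ are all hyperbolic, so $[\rho_X]\in\X_{\hyp}^{\irr}(u,v)$ and the composition makes sense.

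Second I would identify the coordinates as lengths. For a hyperbolic element $M=\rho_X(w)$ one has $|\tr M|=2\cosh\tfrac{d(M)}{2}$, and the translation distance $d(M)$ equals the length of the closed geodesic in the free homotopy class determined by $w$. Therefore $d(\rho_X(u))=\ell_X([\beta_1])$, and similarly for the other three entries, so that $\widetilde{\Psi}\circ\widetilde{\varphi}(X)=(\ell_X([\beta_1]),\ell_X([\beta_2]),\ell_X([\beta_3]),\ell_X([\beta_4]))$, matching the stated formula. Finally, to conclude, suppose $\widetilde{\Psi}\circ\widetilde{\varphi}(X)=\lambda\,\widetilde{\Psi}\circ\widetilde{\varphi}(X')$ for some $\lambda\in\R$; projective injectivity of $\widetilde{\Psi}$ gives $\widetilde{\varphi}(X)=\widetilde{\varphi}(X')$, and injectivity of the embedding $\widetilde{\varphi}$ (guaranteed by Theorem \ref{thm:liftspairofpants}) then forces $X=X'$. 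The only genuinely nontrivial input is the verification in the first step that the four chosen classes are interior and that $\rho_X$ is irreducible; everything else is bookkeeping layered on top of Proposition \ref{prop:projectiveinjectionalgebraic}.
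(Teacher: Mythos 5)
Your argument is correct and is essentially the paper's own: the corollary is stated there without proof, as an immediate consequence of Proposition \ref{prop:projectiveinjectionalgebraic} restricted to the embedded copy $\widetilde{\varphi}(\T(P))\subset\X_{\hyp}^{\irr}(u,v)$. The checks you supply --- irreducibility of $\rho_X$, hyperbolicity of the images of $u$, $v$, $uv$, $u^{-1}v$ via the non-peripherality hypothesis, and the identification of translation distances with geodesic lengths --- are precisely the routine verifications the paper leaves implicit.
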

\begin{remark}
    Given a metric $X$, when a boundary component is not a puncture, we can also choose the corresponding boundary curve to build the above map. We will use this fact for a later discussion on general surfaces.  
\end{remark}

For later use, we also give the corollary for $\T(\Sigma_{1,1})$ case. We remark that the case of $\T(\Sigma_{1,1})$ has been dealt in \cite{McShane-Parlier08}. More precisely, we identify $\pi_1(\Sigma_{1,1})$ with $\F_2$ in either way. Let $[\delta_1], [\delta_2], [\delta_3], [\delta_4]$ be the four homotopy classes of curves in $\Sigma_{1,1}$ associated to $u$, $v$, $uv$ and $u^{-1}v$ respectively.
\begin{corollary}[McShane-Parlier]\label{cor:oneholedtorusprojectiveinjection}
    The map
		\[
			\begin{aligned}
				\T(\Sigma_{1,1})&\rightarrow \R^4\\
					X &\mapsto \left(\ell_{X}([\delta_1]),\ell_{X}([\delta_2]),\ell_X([\delta_3]),\ell_X([\delta_4])\right)
			\end{aligned}
		\]
    is projectively injective.
\end{corollary}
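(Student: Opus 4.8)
The plan is to mimic the argument for the pair of pants (Corollary \ref{cor:pairofpantsprojectiveinjection}) essentially verbatim, the only genuinely new ingredient being a topological verification that the four curves $\delta_1,\dots,\delta_4$ are essential. First I would observe that, identifying $\pi_1(\Sigma_{1,1})$ with $\F_2$ via the basis $(u,v)$, the unique boundary component of $\Sigma_{1,1}$ is represented, up to conjugacy, by the commutator $[u,v]=uvu^{-1}v^{-1}$. The elements $u$, $v$, $uv$ and $u^{-1}v$ are all primitive in $\F_2$, hence map to nonzero vectors in the abelianization $\Z^2$, whereas $[u,v]$ maps to $0$; since conjugation preserves the image in the abelianization, none of $u$, $v$, $uv$, $u^{-1}v$ is conjugate to the peripheral class $[u,v]$. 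Thus each $\delta_i$ is an essential (indeed simple) closed curve, and for every $X\in\T(\Sigma_{1,1})$ its geodesic representative is a genuine closed geodesic, so the holonomy $\rho_X$ sends each of these four elements to a hyperbolic element of $\SLtwoR$. This is exactly the feature that makes the torus case cleaner than the pair of pants, where one had to impose a condition on the basis to steer the $\beta_i$ clear of the three boundary classes $a^{\pm1},b^{\pm1},(ab)^{\pm1}$.

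Next I would check that the holonomy lands in $\X_{\hyp}^{\irr}(u,v)$. For any $X\in\T(\Sigma_{1,1})$ the representation $\rho_X$ is discrete and faithful, hence non-elementary; in particular $\rho_X(u)$ and $\rho_X(v)$ fix no common point of $\partial\Hyp^2$, so they share no common eigenvector in $\R^2$ and $\rho_X$ is irreducible in the sense of Remark \ref{rmk:traceconditionforirreducible}. Combined with the previous paragraph this shows that the holonomy embedding (again denoted $\widetilde{\varphi}$, following the remark that the pair-of-pants discussion carries over to the one-holed torus) satisfies $\widetilde{\varphi}(X)=[\rho_X]\in\X_{\hyp}^{\irr}(u,v)$, so the restriction of $\widetilde{\Psi}$ to its image is defined. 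Because the translation distance of a hyperbolic element equals the length of the associated closed geodesic, the composite $\widetilde{\Psi}\circ\widetilde{\varphi}$ sends $X$ to $(\ell_X([\delta_1]),\ell_X([\delta_2]),\ell_X([\delta_3]),\ell_X([\delta_4]))$, which is precisely the map in the statement.

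Finally I would invoke Proposition \ref{prop:projectiveinjectionalgebraic}: since $\widetilde{\Psi}$ is projectively injective on $\X_{\hyp}^{\irr}(u,v)$, if two points $X_1,X_2\in\T(\Sigma_{1,1})$ satisfy $\widetilde{\Psi}\circ\widetilde{\varphi}(X_1)=\lambda\,\widetilde{\Psi}\circ\widetilde{\varphi}(X_2)$ for some $\lambda\in\R$, then $[\rho_{X_1}]=[\rho_{X_2}]$ in $\X(\F_2,\SLtwoR)$. As $\widetilde{\varphi}$ is the injective holonomy embedding of Teichm\"uller space into the character variety, this forces $X_1=X_2$, giving the desired projective injectivity. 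The one point requiring real care is the topological claim of the first paragraph, namely that for \emph{every} identification of $\pi_1(\Sigma_{1,1})$ with $\F_2$ the four words $u,v,uv,u^{-1}v$ avoid the peripheral class; once this is in hand, the rest is a direct application of the machinery already established.
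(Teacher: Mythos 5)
Your proposal is correct and follows essentially the same route as the paper, which states the corollary as an instance of Proposition \ref{prop:projectiveinjectionalgebraic} via the observation that $\X_{\hyp}^{\irr}(u,v)$ contains a copy of $\T(\Sigma_{1,1})$ (citing McShane--Parlier and leaving the verification implicit). You simply make explicit the details the paper omits: that for any free basis the four primitive elements $u,v,uv,u^{-1}v$ avoid the peripheral (commutator) class and hence have hyperbolic holonomy, and that discreteness and faithfulness give irreducibility.
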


We end this section with the proof of Theorem \ref{thm:generalsurfaceprojectiveinjection}.

\begin{proof}[Proof of Theorem \ref{thm:generalsurfaceprojectiveinjection}]
 The result for $\Sigma_{1,1}$ and $\Sigma_{g,0}$ ($g>1$) can be found in \cite{McShane-Parlier08}. We briefly review the proof. By identifying $\pi_1(\Sigma_{1,1})$ with $\F_2$, all elements of $\pi_1(\Sigma_{1,1})$ that can be part of free bases correspond to homotopy classes of simple closed interior curves. Then, by Corollary \ref{cor:oneholedtorusprojectiveinjection}, the theorem holds for $\Sigma_{1,1}$.
    
 Now consider $\Sigma_{g,0}$. By adding simple curves transversal to a pair of pants decomposition, one may find a collection of curves $\{\gamma_1,...,\gamma_k\}$, whose lengths determine hyperbolic metrics on $\Sigma_{g,0}$. In each pair of pants decomposition of $\Sigma_{g,0}$, there a non-separating curve, that is that a curve whose complement in $\Sigma_{g,0}$ is connected. Therefore, the curve is contained in an embedded one-holed torus in $\Sigma_{g,0}$. Without loss of generality, we may assume that the curve is $\gamma_k$ and denote by $T$ a one-holed torus containing it. We can then add $\gamma_{k+1}$, $\gamma_{k+2}$ and $\gamma_{k+3}$, three more simple curves in $\Sigma_{g,0}$, such that the lengths of $\gamma_k$, $\gamma_{k+1}$, $\gamma_{k+2}$ and $\gamma_{k+3}$ induce a projectively injective continuous map from $\T(T)$ into $\R^4$. By considering the lengths of $\gamma_1$,...,$\gamma_{k+3}$, we have a projectively injective continuous map from $\T(\Sigma_{g,0})$ into $\R^{k+3}$. Hence, we prove Theorem \ref{thm:generalsurfaceprojectiveinjection} in this case.
    
    For $\Sigma_{0,3}=P$, we are not allowed to chose the peripheral curves so we proceed differently. By Corollary \ref{cor:pairofpantsprojectiveinjection}, using the same setup, we consider the free basis $(u,v)=(ab^{-1}ab^{-2},ab^{-2})$. The four required curves can be chosen to be associated to $ab^{-1}$, $ab^{-2}$, $ab^{-1}ab^{-2}$ and $(ab)^{-2}ab^{-2}$ respectively. This proves Theorem \ref{thm:generalsurfaceprojectiveinjection} for $\Sigma_{0,3}$.

    For a surface $\Sigma_{g,n}$ with more complicated topology, we consider finitely many pants of $\Sigma$, such that the union of their interiors cover the surface $\Sigma_{g,n}$. Notice that the metric information on these pairs of pants will determine the metric information on $\Sigma_{g,n}$. Hence, all we have to do is to apply Corollary \ref{cor:pairofpantsprojectiveinjection} to each pair of pants by choosing four curves carefully for each one.

    When the genus $g$ is greater than $0$, we can chose the pants decompositions such that each pair of pants in this decomposition has at most one boundary component which belongs to $\partial\Sigma$. Using the same setup as for Corollary \ref{cor:pairofpantsprojectiveinjection}, for a pair of pants whose boundary curves are all interior, we can choose four curves associated to $a$, $b$, $ab$ and $ab^{-1}$ respectively, which all have at most $1$ self-intersection. For a pair of pants with exactly one boundary curve peripheral in $\Sigma$, we can choose four curves with at most $2$ self-intersections, which correspond to $a$, $a^{-1}b$, $a^{-2}b$ and $b$. Here we assume that the boundary curves of the pair of pants associated to $a$ and $b$ respectively are both interior. This completes the proof of Theorem \ref{thm:generalsurfaceprojectiveinjection} for $\Sigma_{g,n}$ with $g>0$.

    When $g=0$, we can chose the pants decompositions such that each pair of pants in this decomposition has at most two boundary components on $\partial\Sigma_{0,n}$. (For pairs of pants with at most one boundary component peripheral in $\Sigma_{0,n}$, the discussion will be identical.) Given a pair of pants with exactly two boundary curve peripheral in $\Sigma_{0,n}$, we can choose four curves with at most $3$ self-intersections, corresponding to $a$, $a^{-2}b$, $a^{-3}b$ and $a^{-1}b$. Here we assume that the boundary curve of the pair of pants associated to $a$ is interior. This proves Theorem \ref{thm:generalsurfaceprojectiveinjection} for $\Sigma_{0,n}$.
\end{proof}

\section{Length orders determines points in Teichm\"uller space}\label{sec:order}
In this part, we prove Theorem \ref{thm:orderdistiguishpoints}: we need to show that, given a pair of distinct surfaces in $\T(\Sigma)$, there exist two interior curves on $\Sigma$ whose length order differs on the two surfaces.

More precisely, let $X$ and $X'$ be two points in $\T(\Sigma)$. We use the setup from Theorem \ref{thm:generalsurfaceprojectiveinjection} and consider the projectively injective map $F$. Hence, there are two distinct curves among $[\gamma_1],...,[\gamma_t]$ whose length ratios for $X$ and $X'$ are different. Up to reordering, we can assume that the two curves are $[\gamma_1]$ and $[\gamma_2]$. Hence, we have
	\[
	   \frac{\ell_X([\gamma_1])}{\ell_X([\gamma_2])}\neq \frac{\ell_{X'}([\gamma_1])}{\ell_{X'}([\gamma_2])}.
	\]
Without loss of generality, we may assume that
	\[
	   \frac{\ell_X([\gamma_1])}{\ell_X([\gamma_2])}> \frac{\ell_{X'}([\gamma_1])}{\ell_{X'}([\gamma_2])}.
	\]

Let $\beta$ be a homotopically non trivial oriented curve, such that for any $n\in\Z$, we have $[\beta]\neq [\gamma_1^n]$ and $[\beta]\neq [\gamma_2^n]$. Consider the following two collection of curves
	\[
		\begin{aligned}
			&\{[\beta\ast\gamma_1^j]\mid j\in\N\},\\
			&\{[\beta\ast\gamma_2^k]\mid k\in\N\}.
		\end{aligned}
	\]
It is straightforward to see that for any $X_0\in \T(P)$, we have
	\[
		\begin{aligned}
			\lim_{j\rightarrow\infty}\frac{\ell_{X_0}([\beta\ast\gamma_1^j])}{j}&=\ell_{X_0}([\gamma_1]),\\
			\lim_{k\rightarrow\infty}\frac{\ell_{X_0}([\beta\ast\gamma_2^k])}{k}&=\ell_{X_0}([\gamma_2]).
		\end{aligned}
	\]
Therefore, for any constant $\epsilon>0$, there is an index $K(\epsilon)\in\N$, such that for any $j,k\ge K(\epsilon)$, we have
	\[
		\begin{aligned}
			&\frac{|\ell_{X}([\beta\ast\gamma_1^j])-j\ell_{X}([\gamma_1])|}{j}<\epsilon,&&\frac{|\ell_{X'}([\beta\ast\gamma_1^j])-j\ell_{X'}([\gamma_1])|}{j}&<\epsilon,\\
			&\frac{|\ell_{X}([\beta\ast\gamma_2^k])-k\ell_{X}([\gamma_2])|}{k}<\epsilon,&&\frac{|\ell_{X'}([\beta\ast\gamma_2^k])-k\ell_{X'}([\gamma_2])|}{k}&<\epsilon.
		\end{aligned}
	\]
From the above inequality, we have
	\[
		\left\{\begin{aligned}
			\ell_{X}([\beta\ast\gamma_1^j])>j(\ell_{X}([\gamma_1])-\epsilon)\\
			\ell_{X}([\beta\ast\gamma_2^k])<k(\ell_{X}([\gamma_2])+\epsilon)\\
		\end{aligned}\right.\quad\quad
		\left\{\begin{aligned}
			\ell_{X'}([\beta\ast\gamma_1^j])<j(\ell_{X'}([\gamma_1])+\epsilon)\\
			\ell_{X'}([\beta\ast\gamma_2^k])>k(\ell_{X'}([\gamma_2])-\epsilon)\\
		\end{aligned}\right.
	\]
Then we take $\epsilon>0$ such that
	\[
		\frac{\ell_{X}([\gamma_1])-\epsilon}{\ell_{X}([\gamma_2])+\epsilon}>\frac{\ell_{X'}([\gamma_1])+\epsilon}{\ell_{X'}([\gamma_2])-\epsilon}
	\]
Let $j,k>K(\epsilon)$ be such that
	\[
		\frac{\ell_{X}([\gamma_1])-\epsilon}{\ell_{X}([\gamma_2])+\epsilon}>\frac{k}{j}>\frac{\ell_{X'}([\gamma_1])+\epsilon}{\ell_{X'}([\gamma_2])-\epsilon}
	\]
Then we have
	\[
	\left\{\begin{aligned}
		\ell_{X}([\beta\ast\gamma_1^j])&>\ell_{X}([\beta\ast\gamma_2^k])\\
		\ell_{X'}([\beta\ast\gamma_1^j])&<\ell_{X'}([\beta\ast\gamma_2^k])
	\end{aligned}\right.
	\]
Hence, there is a pair $([\beta\ast\gamma_1^j],[\beta\ast\gamma_2^k])$ of curves whose order of length for $X$ differs from that of $X'$. This ends the proof of Theorem \ref{thm:orderdistiguishpoints}.

\section{Some orders never change}\label{sec:neverchange}
In this part, we will investigate curves in $\Sigma$ whose length order never changes. We start by constructing curves on a pair of pants $P$ with this property.

\subsection{Combinatorial description of curves on $P$}
We first give a combinatorial description of curves on $P$. 

We begin by fixing a bit of terminology. A \textit{path} $\xi$ on $P$ is a continuous map from $[0,1]$ to $P$. A path is said to be \textit{simple} if it is injective. If a path $\xi$ has $\xi(0)$ and $\xi(1)$ on $\partial P$, we call it an \textit{arc}. In the following, we allow ourselves to use ``path" or ``arc" as either the map or its image.

Let $\xi_1$ (resp. $\xi_2$, $\xi_3$) be an arc connecting $\alpha_1$ and $\alpha_3$ (resp. $\alpha_1$ and $\alpha_2$, $\alpha_2$ and $\alpha_3$). This is illustrated in Figure \ref{fig:P1}. Assume moreover that $\xi_1$, $\xi_2$ and $\xi_3$ are simple and pairwise disjoint. By cutting along these arcs, we obtain two hexagons, denoted by $H_+$ and $H_-$. Any curve $\eta$ in $P$ will be cut into paths $e_1,...,e_n$, each one of which is either in $H_+$ or $H_-$.
\begin{figure}[ht!] 
\labellist
\small\hair 2pt  
\pinlabel $H^-$ at 31 65
\pinlabel $H^+$ at 31 395
\pinlabel $\xi_1$ at 100 195
\pinlabel $\xi_2$ at 451 195
\pinlabel $\xi_3$ at 751 195

\pinlabel $\alpha_1$ at 235 240
\pinlabel $\alpha_2$ at 580 240
\pinlabel $\alpha_3$ at 451 395
\endlabellist
\centering 
\includegraphics[width=0.4\linewidth]{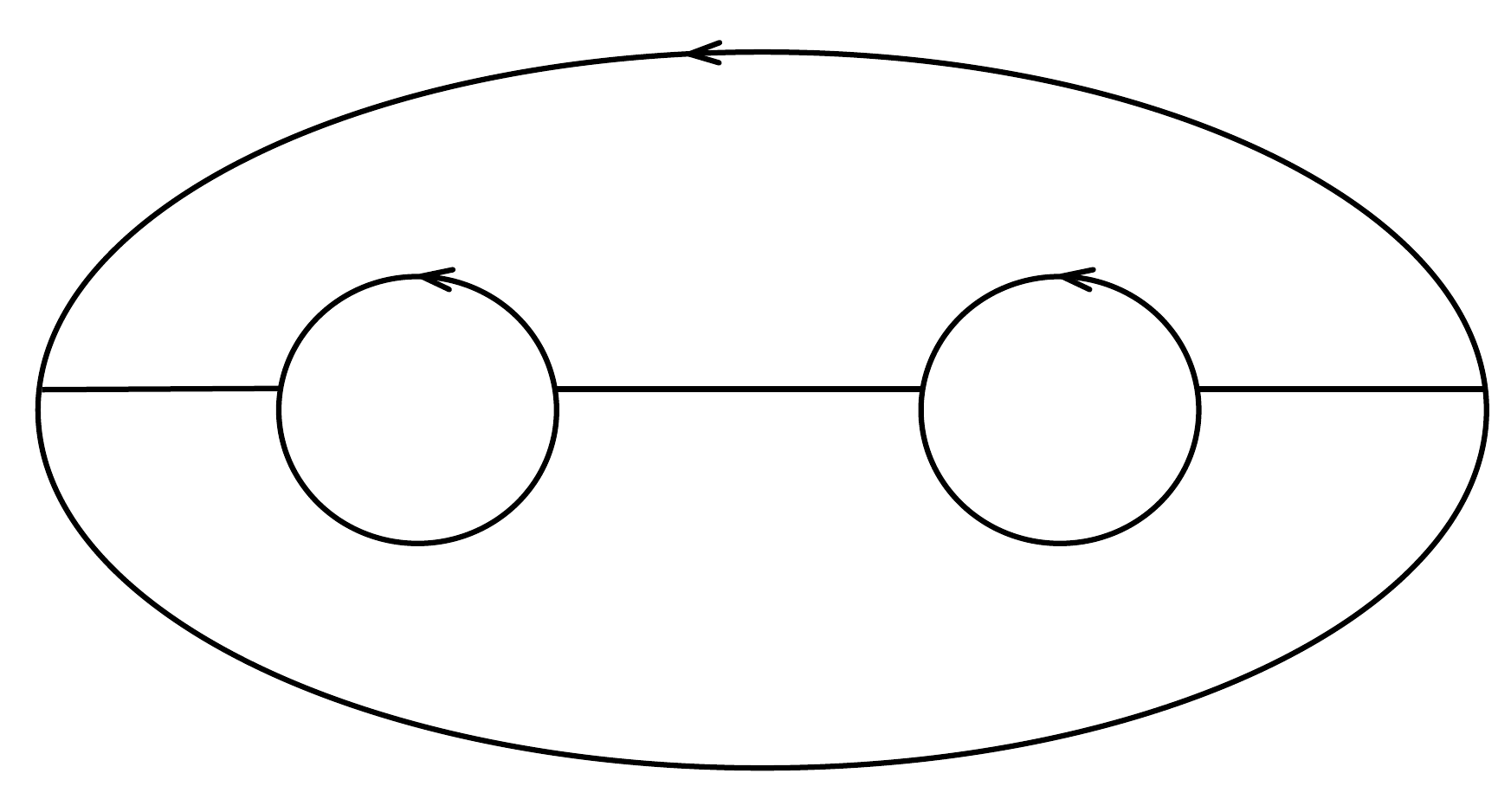}
\caption{Arcs on $P$}\label{fig:P1}
\end{figure}

Conversely, the information on all paths $e_j$ will determine the curve $\eta$. If a path $e_j$ starts on $\xi_i$ and goes into $H_+$ (resp. $H_-$), we denote it by $\xi_i^+$, resp. $\xi_i^-$, see Figure \ref{fig:P2}. In this way, we can associate to the curve $\eta$ a finite word $t_1\cdots t_n$ of letters in $\{\xi_1^{\pm},\xi_2^{\pm},\xi_3^{\pm}\}$.
\begin{figure}[ht!] 
\labellist
\small\hair 2pt  
\pinlabel $H^-$ at 31 65
\pinlabel $H^+$ at 31 395
\pinlabel $\xi_1^-$ at 100 195
\pinlabel $\xi_2^-$ at 451 195
\pinlabel $\xi_3^-$ at 751 195

\pinlabel $\xi_1^+$ at 100 275
\pinlabel $\xi_2^+$ at 451 275
\pinlabel $\xi_3^+$ at 751 275

\pinlabel $\alpha_1$ at 235 240
\pinlabel $\alpha_2$ at 580 240
\pinlabel $\alpha_3$ at 451 395
\endlabellist
\centering 
\includegraphics[width=0.4\linewidth]{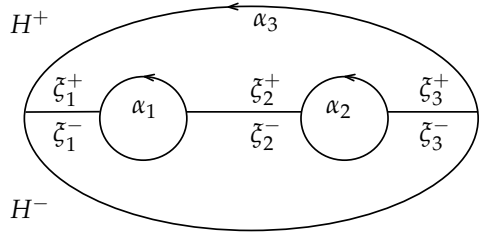}
\caption{The plus and minus versions of $\xi_i$}\label{fig:P2}
\end{figure} 

Let 
    \[
        w=t_{11}\cdots t_{1n_1}t_{21}\cdots t_{2n_2}\cdots t_{k1}\cdots t_{kn_k}
    \]
be a word of letters in $\{\xi_1^{\pm},\xi_2^{\pm},\xi_3^{\pm}\}$, such that for any $j\in\{1,...,k\}$, all letters $t_{j1},...,t_{jn_{j}}$ are in $\{\xi_1^{+},\xi_2^{+},\xi_3^{+}\}$ or $\{\xi_1^{-},\xi_2^{-},\xi_3^{-}\}$ at the same time. Since both $H_+$ and $H_-$ are topological disks, the word
    \[
        w'=t_{11}t_{21}\cdots t_{k1}
    \]
induces a curve homotopic to one induced by $w$. A cyclic permutation on a word will yield another word which induces the same curve on $P$. Moreover, if $\{t_{11},t_{21}\}=\{\xi_i^{+},\xi_i^{-}\}$ for some $i\in\{1,2,3\}$, then the word $w''=t_{31}\cdots t_{k1}$ induces the same curve as $w$ as well.

\begin{figure}[h] 
\labellist
\small\hair 2pt  
\pinlabel $H^-$ at 5 295
\pinlabel $H^+$ at 5 425
\pinlabel $\alpha_1$ at 86 365
\pinlabel $\alpha_2$ at 200 365
\pinlabel $\alpha_3$ at 100 275
\pinlabel $\xi_1$ at 31 350
\pinlabel $\xi_2$ at 130 350
\pinlabel $\xi_3$ at 238 350

\pinlabel $\alpha_1$ at 425 365
\pinlabel $\alpha_2$ at 540 365
\pinlabel $\alpha_3$ at 451 275
\pinlabel $\xi_1$ at 381 350
\pinlabel $\xi_2$ at 480 350
\pinlabel $\xi_3$ at 588 350

\pinlabel $\alpha_1$ at 86 123
\pinlabel $\alpha_2$ at 200 123
\pinlabel $\alpha_3$ at 100 220
\pinlabel $\xi_1$ at 31 107
\pinlabel $\xi_2$ at 133 107
\pinlabel $\xi_3$ at 238 107

\pinlabel $\alpha_1$ at 425 123
\pinlabel $\alpha_2$ at 540 123
\pinlabel $\alpha_3$ at 451 220
\pinlabel $\xi_1$ at 376 107
\pinlabel $\xi_2$ at 490 107
\pinlabel $\xi_3$ at 588 107

\endlabellist
\centering 
\includegraphics[width=12cm]{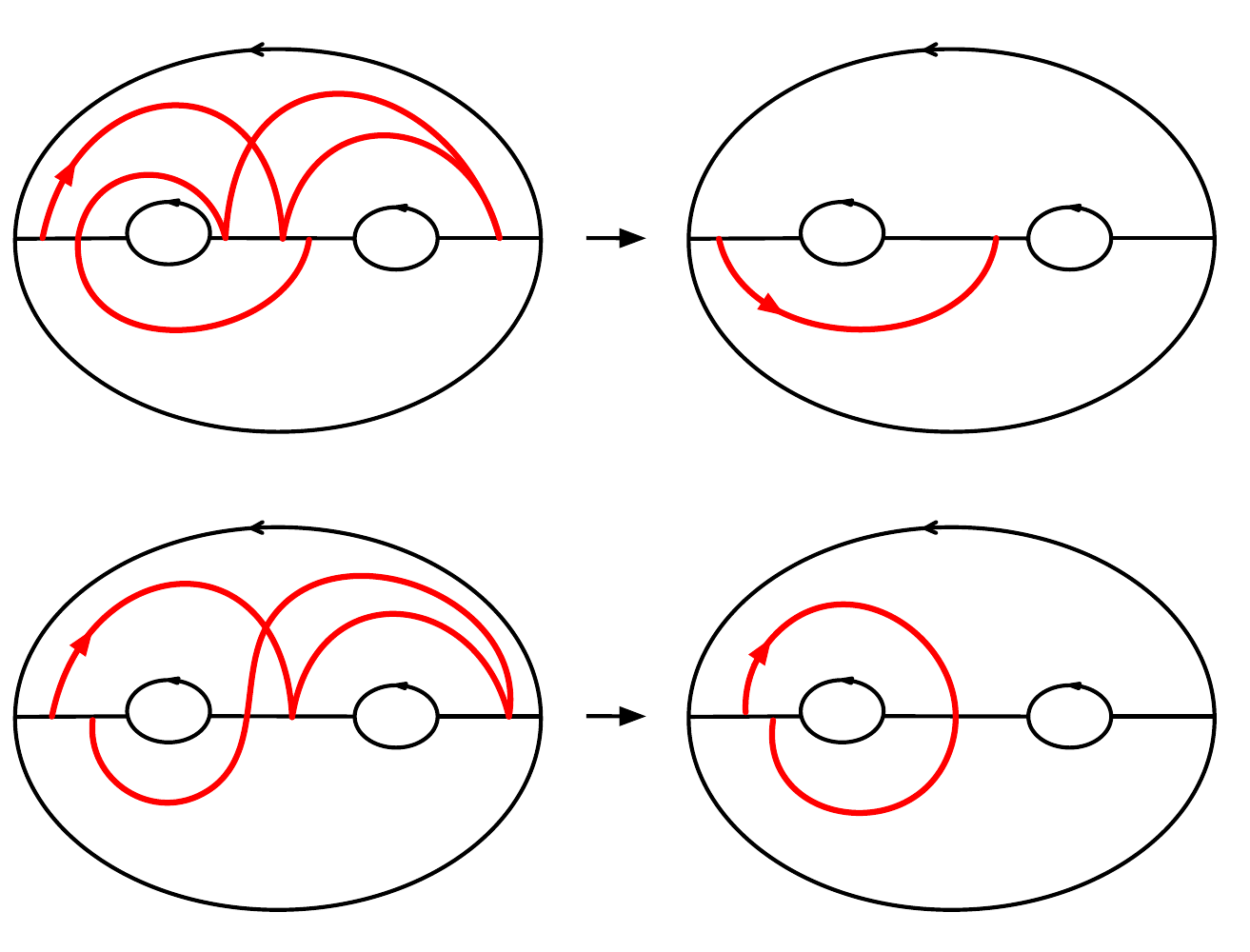}
\caption{Visualizing the homotopies}
\end{figure} 

Therefore, we make some assumptions on the word $w=t_1\cdots t_n$ for the curve $\eta$:
    \begin{itemize}
        \item the number $n$ is even;
        \item if $\eta$ enters $H_+$ (resp. $H_-$) first, then $t_j$ with $j$ odd (resp. even) will be in $\{\xi_1^+,\xi_2^+,\xi_3^+\}$, and $t_j$ with $j$ even (resp. odd) will be in $\{\xi_1^-,\xi_2^-,\xi_3^-\}$;
        \item for any $j\in\{1,...,n-1\}$, we have $\{t_{j},t_{j+1}\}\neq\{\xi_i^{+},\xi_i^{-}\}$ for any $i\in\{1,2,3\}$.
    \end{itemize}
We call words of letters in $\{\xi_1^{\pm},\xi_2^{\pm},\xi_3^{\pm}\}$ with these properties \textit{admissible}. As in the study of elements in $\F_2$, if, moreover, $\{t_{n},t_{1}\}\neq\{\xi_i^{+},\xi_i^{-}\}$ for any $i\in\{1,2,3\}$, the admissible word $w$ is said to be \textit{cyclically reduced}. Two admissible words are said to be \textit{equivalent} if they differ by a cyclic permutation on letters. For any admissible word $w$, we denote by $[w]$ its equivalence class.

Let $W$ denote the collection of equivalence classes of all admissible words of letters in $\{\xi_1^{\pm},\xi_2^{\pm},\xi_3^{\pm}\}$. For any oriented curve $\eta$, we denote by $w(\eta)$ one of its associated admissible words. There is thus a one-to-one map from the set of homotopy classes of oriented curves on $P$ to $W$, which sends $[\eta]$ to $[w(\eta)]$.

\medskip
	
When $P$ is equipped a hyperbolic metric, the above discussion can be realized geometrically. Let $X$ be a hyperbolic metric on $P$ with geodesic boundary (it is convenient to think of all three cuffs are realized by non-zero length simple closed geodesics, although this same discussion works when boundary curves are realized by cusps). We start by realizing $\xi_1$, $\xi_2$ and $\xi_3$ as geodesic arcs orthogonal to the boundary geodesics on both ends. Both $H_+$ and $H_-$ are then realized as isometric geodesic hexagons. By reflecting along $\xi_1$, $\xi_2$ and $\xi_3$, we have an orientation reversing isometry $\iota$ of $P$ exchanging $H_+$ and $H_-$. Notice that such an isometry appears for any choice of hyperbolic metric $X$ on $P$.

Let $\eta$ be an oriented closed geodesic on $P$. It intersects $\xi_1\cup\xi_2\cup\xi_3$ transversely. By cutting at the intersection points, the closed geodesic $\eta$ is decomposed into oriented geodesic segments. We choose one intersection point as the starting point, and denote all these geodesic segments by $e_1,...,e_n$ labeled following the orientation of $\eta$. This induces a word $w(\eta)$ of letters in $\{\xi_1^{\pm},\xi_2^{\pm},\xi_3^{\pm}\}$ which is admissible.

\subsection{Curves whose length order never changes}
In this part, we will use the above combinatorial description to prove the Theorem \ref{thm:orderinvariant}.

To do this, we consider the presentation $\pi_1(P)=\langle a,b\rangle$ given previously, and try to translate words of letters in $\{a^{\pm1},b^{\pm1}\}$ to words of letters in $\{\xi_1^{\pm},\xi_2^{\pm},\xi_3^{\pm}\}$. Consider the following correspondence:
    \[
        \begin{aligned}
            \theta(a)&=\xi_1^+\xi_2^-,\\
            \theta(a^{-1})&=\xi_2^+\xi_1^-,\\
            \theta(b)&=\xi_2^+\xi_3^-,\\
            \theta(b^{-1})&=\xi_3^+\xi_2^-.
        \end{aligned}
    \]
Given any reduced word $s_1\cdots s_n$ of letters in $\{a^{\pm1},b^{\pm1}\}$, we consider the word $\theta(s_1)\cdots\theta(s_n)$ of letters in $\{\xi_1^{\pm},\xi_2^{\pm},\xi_3^{\pm}\}$. By taking necessary cancellations between adjacent letters, we have a reduced word $\theta(s_1\cdots s_n)$.
\begin{lemma}\label{lem:techlemma}
    If a reduced word $s_1\cdots s_n$ of letters in $\{a^{\pm1},b^{\pm1}\}$ is cyclically reduced, then $\theta(s_1\cdots s_n)$ is either a cyclically reduced and admissible word or an admissible word $twt'$ with $w$ cyclically reduced and $\{t,t'\}=\{\xi_i^+,\xi_i^-\}$ for some $i\in\{1,2,3\}$.
\end{lemma}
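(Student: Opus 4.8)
The plan is to track how the two-letter blocks $\theta(s_j)$ interact at their junctions. The first thing I would record is that each block is a plus-letter followed by a minus-letter, and I would tabulate the indices: writing $\theta(s_j)=\xi^+_{p(s_j)}\xi^-_{q(s_j)}$, one has $(p,q)=(1,2),(2,1),(2,3),(3,2)$ for $s_j=a,a^{-1},b,b^{-1}$ respectively. Within a single block the two indices always differ, so no cancellation ever occurs inside a block; a cancellation can only happen across a junction, between the trailing $\xi^-_{q(s_j)}$ of $\theta(s_j)$ and the leading $\xi^+_{p(s_{j+1})}$ of $\theta(s_{j+1})$, and it happens exactly when $q(s_j)=p(s_{j+1})$.

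Next I would determine, using that $s_1\cdots s_n$ is reduced (so $s_{j+1}\neq s_j^{-1}$), precisely which junctions cancel. A direct inspection of the four values of $(p,q)$ shows that $q(s_j)=p(s_{j+1})$ holds if and only if $s_js_{j+1}\in\{ab,b^{-1}a^{-1}\}$, in which case exactly one cancelling pair $\{\xi_2^+,\xi_2^-\}$ is deleted. The key structural point, which I expect to be the heart of the argument, is that no two adjacent junctions can both cancel: if $s_js_{j+1}=ab$ then $s_{j+1}=b$ forces $s_{j+1}s_{j+2}\notin\{ab,b^{-1}a^{-1}\}$ and $s_j=a$ forces $s_{j-1}s_j\notin\{ab,b^{-1}a^{-1}\}$, and symmetrically for $b^{-1}a^{-1}$. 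Consequently the cancellations occur at pairwise non-adjacent junctions, they may be performed simultaneously, and after deleting the pair at a cancelling junction the two surviving neighbours are $\xi^+_{p(s_j)}$ and $\xi^-_{q(s_{j+1})}$, whose indices are checked to differ ($\{1,3\}$ in both cases); hence no cascade occurs and the reduction terminates in one pass. This shows $\theta(s_1\cdots s_n)$ is admissible, and that its first letter is $\xi^+_{p(s_1)}$ and its last letter is $\xi^-_{q(s_n)}$, neither of which can ever be cancelled for want of a neighbour on the relevant side.

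Finally I would analyze the cyclic junction between $\theta(s_n)$ and $\theta(s_1)$ by the same index computation. The cyclic pair $(\xi^-_{q(s_n)},\xi^+_{p(s_1)})$ forms a cancelling pair $\{\xi_i^+,\xi_i^-\}$ precisely when $q(s_n)=p(s_1)$, i.e. when $s_ns_1\in\{ab,b^{-1}a^{-1}\}$; note that cyclic reducedness of $s_1\cdots s_n$ rules out only $s_n=s_1^{-1}$, not these two patterns. If the condition fails, the admissible word is cyclically reduced, giving the first alternative. If it holds, then necessarily $i=2$, so $\theta(s_1\cdots s_n)=t\,w\,t'$ with $t=\xi_2^+$ its first letter, $t'=\xi_2^-$ its last letter, and $\{t,t'\}=\{\xi_2^+,\xi_2^-\}$; it remains to check that $w$ is cyclically reduced. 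Since $s_ns_1\in\{ab,b^{-1}a^{-1}\}$ pins down $s_1\in\{b,a^{-1}\}$ and $s_n\in\{a,b^{-1}\}$, the junctions $s_1s_2$ and $s_{n-1}s_n$ cannot cancel, so the new endpoints of $w$ are $\xi^-_{q(s_1)}$ and $\xi^+_{p(s_n)}$; a final index check, giving $\{q(s_1),p(s_n)\}=\{3,1\}$ in both cases, shows these do not form a cancelling pair, so $w$ is cyclically reduced, giving the second alternative.
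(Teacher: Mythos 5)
Your proof is correct and follows essentially the same route as the paper's: both hinge on the observation that cancellation in $\theta(s_1)\cdots\theta(s_n)$ occurs only at junctions of the form $ab$ or $b^{-1}a^{-1}$, that reducedness of $s_1\cdots s_n$ prevents any cascading cancellation (so the first letter of $\theta(s_1)$ and the last letter of $\theta(s_n)$ survive), and then the same case analysis of the cyclic junction, which cancels exactly when $(s_1,s_n)\in\{(b,a),(a^{-1},b^{-1})\}$. The only cosmetic difference is that the paper packages the no-cascade fact as an induction on $n$ showing no block $\theta(s_j)$ is fully cancelled, whereas you argue directly that cancelling junctions are pairwise non-adjacent and that the newly adjacent letters never form a cancelling pair.
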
    
\begin{proof}
    Let $s_1\cdots s_n$ be any cyclically reduced word of letters in $\{a^{\pm1},b^{\pm1}\}$. We first claim that, for each $1\le j\le n$, as words of $\{\xi_1^{\pm},\xi_2^{\pm},\xi_3^{\pm}\}$, the two letters in $\theta(s_j)$ cannot be canceled simultaneously, when we get $\theta(s_1\cdots s_n)$ from the concatenation $\theta(s_1)\cdots \theta(s_n)$ by taking necessary cancellations between adjacent letters.
    
    We prove this by an induction on $n\in\N^\ast$. When $n=1$, the claim holds. Now we consider a natural number $N>1$, and assume that the claim holds for $1\le n<N$. Consider the case when $n=N$. Consider $s_1\cdots s_{n-1}$. By the induction, the last letter of $\theta(s_{n-1})$ will remain in $\theta(s_1\cdots s_{n-1})$ as the last letter. If there is no cancellation between $\theta(s_{n-1})\theta(s_n)$, then we have the claim. Otherwise, we have $s_{n-1}s_n=ab$ or $b^{-1}a^{-1}$. By our induction assumption on $s_1\cdots s_{n-1}$, to cancel the first letter of $\theta(s_{n-1})$, we have to use the second letter of $\theta(s_{n-2})$. If $n>2$, then we should have $s_{n-2}=a^{-1}$ or $b$, respectively, which are both impossible. Hence, we conclude the claim. From the claim, we can also conclude that the resulting word $\theta(s_1\cdots s_n)$ is independent of the order of cancellations being applied.
    
    Now we prove the lemma. As a consequence of the above claim, the starting letter in $\theta(s_1\cdots s_n)$ will be the first letter of $\theta(s_1)$, while the last letter of $\theta(s_1\cdots s_n)$ will be the last letter of $\theta(s_n)$. Hence, we only have to study the case when $(s_1,s_n)=(b,a)$ and the case when $(s_1,s_n)=(a^{-1},b^{-1})$. We will prove the lemma for the first case, and the other case can be discussed in a similar way. Notice that to cancel $\xi_3^-$ from $\theta(b)$, we need $s_2=b^{-1}$, and to cancel $\xi_1^+$ from $\theta(a)$, we need $s_{n-1}=a^{-1}$. Both of them are impossible, hence $\theta(s_1\cdots s_n)$ is of the form $\xi_2^+w\xi_2^-$ with $w$ starting and ending with $\xi_3^-$ and $\xi_1^+$ respectively. This shows that $w$ is cyclically reduced.
 \end{proof}

We now consider the map
    \[
        \begin{aligned}
            \Theta: [\pi_1(P)]^\ast&\rightarrow W,\\
                    [s_1\cdots s_n]&\mapsto [\theta(s_1\cdots s_n)]. 
        \end{aligned}
    \]
where $[\pi_1(P)]^\ast$ denote the collection of non-trivial conjugacy classes of elements in $\pi_1(P)$. By identifying the elements on both sides to their corresponding curves on $P$, we can see that $\Theta$ is bijective.

Theorem \ref{thm:orderinvariant} is a consequence of the following two results on words of letters in $\{\xi_1^\pm,\xi_2^\pm,\xi_3^\pm\}$.

\begin{proposition}\label{prop:nocancellation}
    Let $w$ and $w'$ be two admissible words of letters in $\{\xi_1^{\pm},\xi_2^{\pm},\xi_3^{\pm}\}$, such that $w$ is cyclically reduced. If the concatenation $ww'$ is also  cyclically reduced and admissible, then for any $X\in\mathcal{T}(P)$, we have
	\[
	\ell_X([\eta])< \ell_X([\eta'']),
	\]
	where $[\eta]$ and $[\eta'']$ are the curves associated to $w$ and $ww'$ respectively.
\end{proposition}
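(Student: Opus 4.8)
The plan is to work in the universal cover $(\widetilde P,\widetilde X)\cong\Hyp^2$ and to manufacture, out of the geodesic $\eta''$ for $ww'$, a comparison loop freely homotopic to $\eta$ that is strictly shorter than $\eta''$; since $\eta$ is the \emph{unique} shortest representative of its free homotopy class, this yields $\ell_X([\eta])<\ell_X([\eta''])$. Fix a lift $A''$ of $\eta''$: it is the axis of the hyperbolic element $g''\in\pi_1(P)$ in the conjugacy class of $\eta''$, and $\ell_X([\eta''])$ equals its translation length. The lifts of $\xi_1,\xi_2,\xi_3$ form a family of pairwise disjoint geodesic edges tiling $\Hyp^2$ by lifted hexagons, and $A''$ crosses them in a bi-infinite sequence $\dots,E_0,E_1,\dots$ realizing the periodic pattern $(ww')^\infty$. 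Since $ww'$ is reduced I may place $E_0$ at the start of a $w$-block, so the first $n$ crossings follow $w$ and the next $m$ follow $w'$. Writing $x_0=A''\cap E_0$, $y=A''\cap E_n$, $x_1=A''\cap E_{n+m}$, I have $x_1=g''x_0$; with $g_w,g_{w'}$ the elements reading $w,w'$ from $E_0$ this gives the factorisation $g''=g_wg_{w'}$ together with $E_n=g_wE_0$, and $g_w$ lies in the conjugacy class of $\eta$.

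Next I close up the $w$-block into a loop representing $\eta$. The sub-arc $[x_0,y]\subset A''$ projects to a path realizing the pattern $w$; since $g_wx_0\in g_wE_0=E_n$ and $y\in E_n$, I may adjoin the segment of the edge $E_n$ from $y$ to $g_wx_0$ to obtain a piecewise-geodesic loop $c$ whose lift runs from $x_0$ to $g_wx_0$ and which crosses the arcs in the cyclic pattern $w$. By the bijection between homotopy classes of oriented curves and admissible words, $[c]=[\eta]$, so $\ell_X([\eta])\le\ell(c)$. Because $x_0,y,x_1$ occur in this order along the geodesic $A''$, the lengths split as $\ell_X([\eta''])=d(x_0,y)+d(y,x_1)$ and $\ell(c)=d(x_0,y)+d(y,g_wx_0)$, whence
\[
\ell_X([\eta''])-\ell_X([\eta])\;\ge\;\ell_X([\eta''])-\ell(c)\;=\;d(y,g_wg_{w'}x_0)-d(y,g_wx_0).
\]
Applying the isometry $g_w^{-1}$ and setting $y':=g_w^{-1}y$, both $y'$ and $x_0$ lie on $E_0$ (since $g_w^{-1}E_n=E_0$), and the whole proposition reduces to the single inequality
\[
d(y',g_{w'}x_0)\;>\;d(y',x_0).
\]

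To establish this I would exploit disjointness of the lifted edges. The geodesic $g_w^{-1}A''$ passes through $y'\in E_0$ and, crossing $m$ edges in the pattern $w'$, reaches $g_{w'}x_0$ on the disjoint edge $g_{w'}E_0$, which sits strictly inside the open half-plane $\mathcal{H}$ bounded by the complete geodesic $\widehat E_0\supset E_0$ into which the $w'$-translate develops. Dropping the perpendicular from $g_{w'}x_0$ to $\widehat E_0$ with foot $F$, the hyperbolic Pythagorean relation $\cosh d(y',g_{w'}x_0)=\cosh d(y',F)\,\cosh d(g_{w'}x_0,\widehat E_0)$ gives $d(y',g_{w'}x_0)>d(y',F)$, strictly, because $g_{w'}x_0\notin\widehat E_0$. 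It then suffices to show that $F$ lies on $\widehat E_0$ at least as far from $y'$ as $x_0$ does.

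This positional statement is the crux and the step I expect to be the main obstacle, for it is exactly where the hypotheses are used. Cyclic reducedness of $ww'$ (no backtracking across the seam rejoining $w'$ to $w$) and admissibility of the concatenation are what force $g_{w'}E_0$, hence its projection $F$, to lie \emph{beyond} $x_0$ in $\mathcal{H}$ rather than folding back toward $y'$; if either hypothesis is dropped, $g_{w'}x_0$ may fall inside the ball of radius $d(y',x_0)$ about $y'$ and the inequality fails, consistently with the fact that the excluded cases include the boundary-length comparisons (e.g. $a$ versus $ab$). I would carry the positional claim out by tracking, edge by edge along the $w'$-block, the nested half-planes cut off by the successive edges $E_0,\,g_w^{-1}E_{n+1},\,\dots,\,g_{w'}E_0$, and verifying that the reduced non-backtracking pattern moves the projection monotonically away from $y'$ and past $x_0$.
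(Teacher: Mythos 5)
Your opening move --- truncate the axis $A''$ of $\eta''$ after the $w$-block and close it up into a competitor for $[\eta]$ --- is the same first step as the paper's proof, but the argument then goes wrong in two places. The first is a concrete error: the identity $E_n=g_wE_0$ is false in general. The edge $E_n$ is the $(n{+}1)$-st crossing of $A''$ and is therefore a lift of the arc $\xi_i$ carrying the \emph{first letter of $w'$}, whereas $g_wE_0$ is a lift of the arc carrying the first letter of $w$; these are different arcs unless the two first letters happen to involve the same $\xi_i$. (The two edges do bound a common lifted hexagon, but they are not equal.) Consequently $g_wx_0$ does not lie on $E_n$, ``the segment of the edge $E_n$ from $y$ to $g_wx_0$'' is undefined, and after applying $g_w^{-1}$ the points $y'$ and $x_0$ do not sit on a common edge $E_0$. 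The correct closing path is the geodesic chord of that hexagon from $y$ to $g_wx_0$ --- which is precisely the path $\eta_0$ the paper constructs --- but once you replace your edge-segment by this chord, the entire reduction to ``two points on $\widehat E_0$ and the foot $F$ of a perpendicular'' collapses.

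The second problem is the one you flag yourself: even in the special case where $y'$ and $x_0$ do lie on the same edge, the positional claim $d(y',F)\ge d(y',x_0)$ is essentially the whole content of the proposition and is left unproved. ``Tracking nested half-planes'' does not obviously control where $F$ lands relative to $x_0$, because $x_0$ is the intersection of the axis of $g''$ with $E_0$ and its position depends on all of $ww'$, not just on the $w'$-block; nothing in the non-backtracking condition forces the projection to move ``past $x_0$''. The missing idea is the orientation-reversing isometry $\iota$ of $P$ obtained by reflecting along $\xi_1\cup\xi_2\cup\xi_3$: the paper folds the $w'$-tail of $\eta''$ segment by segment into the single hexagon $H_-$, producing a path of the \emph{same} length as the tail, lying in a convex hexagon, and joining the two endpoints of the closing chord. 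The inequality ``chord $<$ folded tail'' is then immediate, since the folded path has corners and is not a geodesic. That one isometry replaces the entire unproved half-plane and projection analysis, and is the step your proposal would need to supply.
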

\begin{proof}
Let $X$ be a hyperbolic metric on $P$. We will show that the inequality holds for $X$-geodesics on $P$. Since $X$ is chosen arbitrarily, this will prove the proposition.

Assume, moreover, that all boundary curves are geodesics and $\xi_1$, $\xi_2$ and $\xi_3$ are geodesic arcs orthogonal to boundary geodesics. As before, we denote by $\iota$ the orientation reversing isometry of $P$ by reflecting $P$ along $\xi_1\cup\xi_2\cup\xi_3$.

Let $w=t_1\cdots t_m$ and $w'=t_1'\cdots t_n'$. The geodesic $\eta$ is decomposed into geodesic segments $e_1,...,e_m$. Without loss of generality, we may assume that $e_1\subset H^+$, hence $e_m\subset H_-$. Consider the concatenation $ww'=t_1\cdots t_mt_1'\cdots t_n'$. Since $ww'$ is also  cyclically reduced and admissible, the geodesic $\eta''$ is decomposed into geodesic segments $e_1'',...,e_{m+n}''$. Notice that the geodesic $e''=e_1''\cup\cdots\cup e_m''$ passes $\xi_1$, $\xi_2$ and $\xi_3$ in the same order and along the same orientations as $e=e_1\cup\cdots\cup e_m$. Following the orientation, the endpoint of $e''$ and $e$ may be on different $\xi_i$ and $\xi_j$. In any case, by connecting the endpoints of $e''$ using geodesics in $H_-$, we have a closed piecewise geodesic path $\eta_0$ which is homotopic to $\eta$. Since $\eta$ is the geodesic representative in the homotopy class $[\eta]$, we have
    \[
        \ell_X(\eta)\le \ell_X(\eta_0).
    \]


Now we consider the geodesic $\eta''$ and the geodesic segments $e_{m+1}'',...,e_{m+n}''$. For any index $m+1\le j\le m+n$, if $e_j''\subset H_-$, we replace $e_j''$ by $\iota(e_j'')$ in $\eta''$ and obtain a new closed piecewise geodesic path denoted by $\eta_0''$. 
\begin{figure}[h] 
\labellist
\small\hair 2pt   
\pinlabel $e_1''$ at 31 440
\pinlabel $\color{darkorange}{e_1}$ at 45 435
\pinlabel ${\color{darkorange}e_{m}}$ at 252 339
\pinlabel ${e_{m}''}$ at 213 337
\pinlabel $e_{m+1}''$ at 200 422
\pinlabel $e_{m+2}''$ at 190 363
\pinlabel $e_{m+3}''$ at 95 435
\pinlabel $e_{m+4}''$ at 87 367
\pinlabel $e_{m+5}''$ at 200 482
\pinlabel $e_{m+6}''$ at 135 297 

\pinlabel $e_1''$ at 372 440
\pinlabel $\color{darkorange}{e_1}$ at 385 435
\pinlabel ${\color{darkorange}e_{m}}$ at 598 346
\pinlabel ${e_{m}''}$ at 562 337
\pinlabel ${\color{red}i(e_{m+1}'')}$ at 526 348
\pinlabel ${e_{m+2}''}$ at 513 373
\pinlabel ${\color{red}i(e_{m+3}'')}$ at 424 325
\pinlabel ${e_{m+4}''}$ at 424 370
\pinlabel ${\color{red}i(e_{m+5}'')}$ at 534 320
\pinlabel $e_{m+6}''$ at 464 298 

\pinlabel $e_1''$ at 372 180
\pinlabel $\color{darkorange}{e_1}$ at 385 180
\pinlabel ${\color{darkorange}e_{m}}$ at 597 70
\endlabellist
\centering 
\includegraphics[width=12cm]{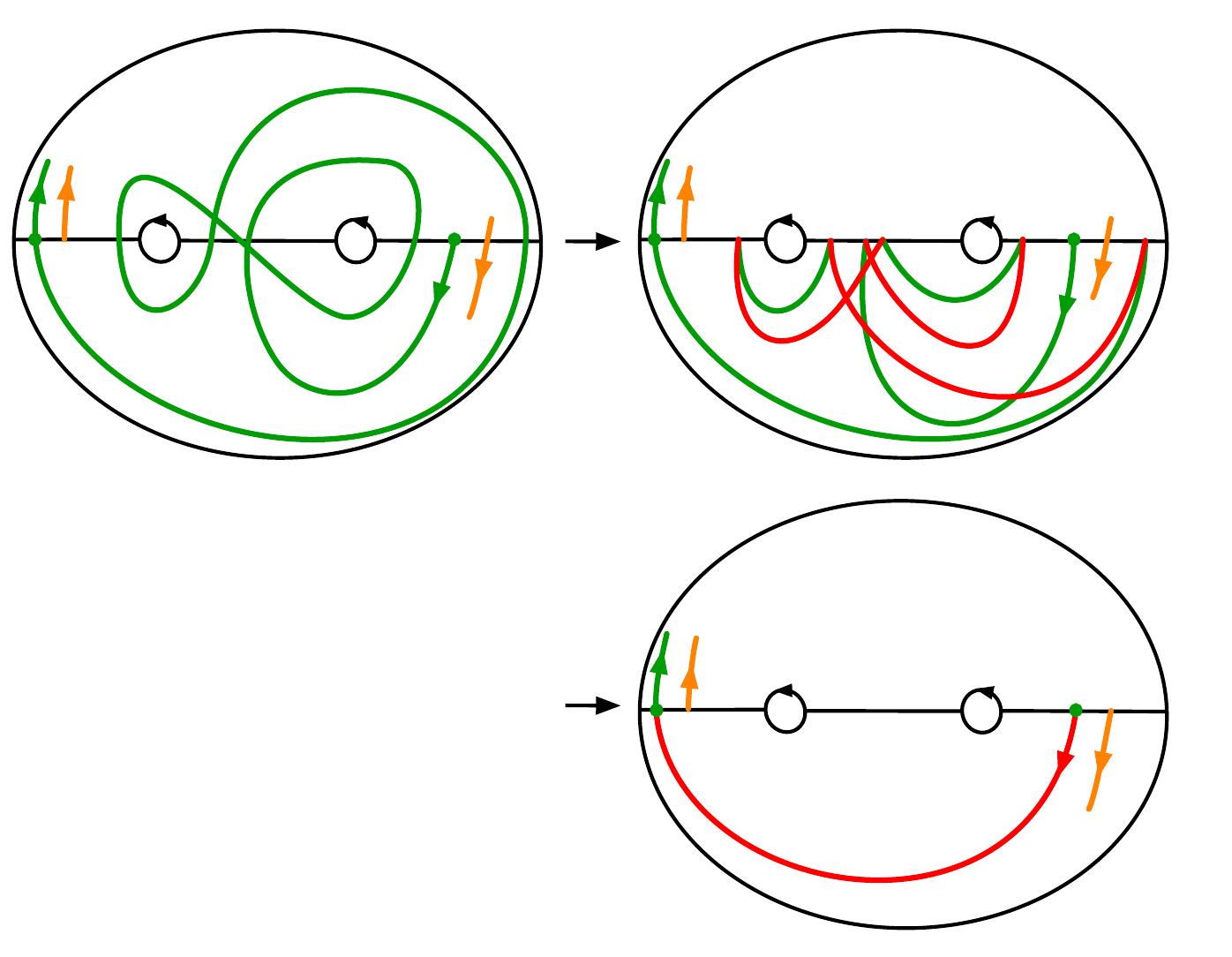}
\caption{Reducing lengths of geodesics via the involution of a pair of pants, I}
\end{figure} 
    
The piecewise geodesic path
    \[
		e_{m}''\cup \iota(e_{m+1}'')\cup e_{m+2}''\cup\cdots\cup \iota(e_{m+n-1}'')\cup e_{m+n}''
	\]
is contained entirely in $H_-$ and connects the starting and ending point of $e$. Hence
    \[
        \ell_X(\eta'')=\ell_X(\eta_0'')> \ell_X(\eta_0)\ge \ell_X(\eta)
    \]
and so
    \[
        \ell_X([\eta''])>\ell_X([\eta]).
    \]
\end{proof}

\begin{proposition}\label{prop:onecancellation}
    Let $w=t_1\cdots t_m$ and $w'=t_1'\cdots t_n'$ be two admissible words of letters in $\{\xi_1^{\pm},\xi_2^{\pm},\xi_3^{\pm}\}$, such that $w$ is cyclically reduced. If $\{t_m,t_1'\}=\{\xi_i^+,\xi_i^-\}$ for some $i\in\{1,2,3\}$, $t_1\neq t_{m-1}$ and $t_1\cdots t_{m-1}t_2'\cdots t_n'$ is  cyclically reduced and admissible, then for any $X\in\mathcal{T}(P)$, we have
	\[
	\ell_X([\eta])< \ell_X([\eta'']),
	\]
	where $[\eta]$ and $[\eta'']$ are curves induced by $w$  and $ww'$ respectively.
\end{proposition}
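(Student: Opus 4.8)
The plan is to run the same reflection argument as for Proposition~\ref{prop:nocancellation}, adapted to the single cancellation at the junction. First I would fix an arbitrary $X\in\T(P)$, realize the three cuffs as geodesics and $\xi_1,\xi_2,\xi_3$ as the orthogeodesic arcs, so that $H_+$ and $H_-$ become isometric right-angled hexagons and reflection along $\xi_1\cup\xi_2\cup\xi_3$ gives an orientation-reversing isometry $\iota$ that interchanges $H_+$ and $H_-$ while fixing each $\xi_j$ pointwise. Since $X$ is arbitrary, it suffices to compare the $X$-geodesic representatives $\eta$ (of $w$) and $\eta''$ (of the reduced word $t_1\cdots t_{m-1}t_2'\cdots t_n'$), each cut along the arcs into its geodesic segments $e_1,\dots,e_m$ and $e_1'',\dots,e_{m+n-2}''$.

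The new feature relative to Proposition~\ref{prop:nocancellation} is that the cancellation $\{t_m,t_1'\}=\{\xi_i^+,\xi_i^-\}$ re-routes the geodesic one crossing early: $e_{m-1}''$ starts on the arc $\xi_c$ carrying $t_{m-1}$ but, being followed by $t_2'$ rather than $t_m$, terminates on a different arc than $e_{m-1}$ does in $\eta$. Hence the genuinely common combinatorial head is $e_1''\cup\cdots\cup e_{m-2}''$, ending on $\xi_c$, and from there $\eta$ makes the short return of type $t_{m-1}t_m$ (a single interior crossing, at $\xi_i$) back to the starting arc $\xi_a$ of $t_1$, while $\eta''$ makes the long return of type $t_{m-1}t_2'\cdots t_n'$. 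As before, I would reflect by $\iota$ those segments of $\eta''$'s return that lie in $H_-$, folding the whole return into the single hexagon $H_+$; since $\iota$ is an isometry fixing the arcs, this produces a broken geodesic $\widetilde{R}''\subset H_+$ of the same length, joining the two endpoints $p,q$ of the head on $\xi_c$ and $\xi_a$. I would then fold the short return of type $t_{m-1}t_m$ in the same way; because its two pieces meet transversally along $\xi_i$, the fold turns the $t_{m-1}t_m$-geodesic into the billiard path $\widetilde{R}_0$ in $H_+$ from $p$ to $q$ that reflects off the $\xi_i$-side, and this path is exactly the shortest path in the convex hexagon $H_+$ from $p$ to $q$ meeting that side. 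Gluing it back to the head gives a closed curve with reduced word $w$ — here the cyclic reducedness of $w$ (so $t_m$ and $t_1$ do not cancel) is used to see the closure really has type $t_{m-1}t_m$ — hence homotopic to $\eta$, so that $\ell_X(\eta)\le \ell_X(\text{head})+\ell_X(\widetilde{R}_0)$.

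The proof then reduces to the strict inequality $\ell_X(\widetilde{R}_0)<\ell_X(\widetilde{R}'')$ between two paths of $H_+$ with the same endpoints $p,q$, and this is the main obstacle. By convexity of $H_+$ the billiard path $\widetilde{R}_0$ is the shortest among all paths from $p$ to $q$ that meet the $\xi_i$-side, so the inequality follows \emph{provided} $\widetilde{R}''$ also meets that side and is not equal to $\widetilde{R}_0$; establishing this is exactly where the remaining hypotheses enter. The cyclic reducedness and admissibility of $t_1\cdots t_{m-1}t_2'\cdots t_n'$ guarantee that $\eta''$'s return crosses the arcs transversally without backtracking, so $\widetilde{R}''$ is a genuinely broken path, while the condition $t_1\neq t_{m-1}$, i.e.\ $\xi_a\neq\xi_c$, rules out the degenerate turn-around configuration in which the two returns could coincide. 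I expect the delicate points to be verifying that $\eta''$'s folded return actually meets the $\xi_i$-side, so that the convexity comparison applies, and disposing of the shortest degenerate case; once these are settled, the chain $\ell_X(\eta)\le\ell_X(\text{head})+\ell_X(\widetilde{R}_0)<\ell_X(\text{head})+\ell_X(\widetilde{R}'')=\ell_X(\eta'')$ finishes the argument, the length bookkeeping being identical to that in Proposition~\ref{prop:nocancellation}.
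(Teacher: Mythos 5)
Your overall template is the right one --- fold segments across the involution $\iota$ and compare lengths inside a single hexagon --- but as written the proposal has a genuine gap at exactly the point you flag yourself: you never establish that the folded return $\widetilde{R}''$ of $\eta''$ actually meets the $\xi_i$-side of $H_+$. Without that, the convexity/reflection-principle comparison with the billiard path $\widetilde{R}_0$ has nothing to bite on, and the whole chain of inequalities collapses. This is not a routine verification to be deferred; it is the one substantive combinatorial step of the proposition. The paper settles it as follows (in the normalization $t_1=\xi_1^+$, $t_{m-1}=\xi_2^+$, so that cyclic reducedness of $w$ forces $t_m=\xi_3^-$ and hence $i=3$): if no $t_j'$ with $j\in\{2,\dots,n\}$ had subscript $3$, then admissibility would force $t_2'\cdots t_n'$ to alternate $\xi_1^-\xi_2^+\xi_1^-\cdots$, and since $n-1$ is odd the reduced word $t_1\cdots t_{m-1}t_2'\cdots t_n'$ would end in $\xi_1^-$ while beginning with $t_1=\xi_1^+$, contradicting the hypothesis that it is cyclically reduced. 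You should supply this (or an equivalent) argument; note also that it is here, not in ruling out a ``turn-around,'' that the hypotheses on $t_1$, $t_{m-1}$ and cyclic reducedness really earn their keep. A secondary loose end: to get \emph{strict} inequality you must exclude $\widetilde{R}''=\widetilde{R}_0$, which requires checking $n\geq 4$ (the case $n=2$ is incompatible with admissibility of $w'$ together with the constraints above), and you should verify that the optimal reflection point of the billiard path lands in the interior of the $\xi_i$-side of the hexagon rather than merely on the geodesic containing it.

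Assuming those points are filled in, your route does work, but it is noticeably heavier than the paper's. The paper never introduces a shortest billiard path: it reflects alternate segments of the return of $\eta''$ so that the portion from $\xi_2$ up to the first crossing of $\xi_3$ lies entirely in $H_+$ and the remainder lies entirely in $H_-$. The resulting closed broken geodesic $\eta_0''$ has exactly the length of $\eta''$ (since $\iota$ is an isometry) and realizes the admissible word $w$, hence is homotopic to $\eta$; since it has genuine corners it is not the geodesic representative, so $\ell_X(\eta'')=\ell_X(\eta_0'')>\ell_X(\eta)$ follows from minimality of the geodesic in its free homotopy class. This sidesteps the reflection principle, the head-versus-return bookkeeping, and the degenerate-coincidence checks entirely, while relying on precisely the same combinatorial fact that some $t_j'$ crosses $\xi_i$.
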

\begin{proof}
    Let $X$ be a hyperbolic metric on $P$. We will show that the inequality holds for $X$-geodesics on $P$. Since $X$ is chosen arbitrarily, this will prove the proposition.

    Assume moreover that all boundary curves are geodesics and $\xi_1$, $\xi_2$ and $\xi_3$ are geodesic arcs orthogonal to boundary geodesics. Again, let $\iota$ be the orientation reversing isometry of $P$ obtained by reflecting $P$ along $\xi_1\cup\xi_2\cup\xi_3$. Assume that the geodesic $\eta$ is decomposed into geodesic segments $e_1,...,e_m$ and the geodesic $\eta''$ is decomposed into geodesic segments $e_1'',...,e_{m-1}'',e_{m}'',...,e_{m+n-2}''$. Without loss of generality, we assume that $t_1=\xi_1^+$. 
    
    First consider the case when $t_{m-1}=\xi_2^+$. Since $w$ is admissible, we have $t_m=\xi_3^-$ and since $t_1\cdots t_{m-1}t_2'\cdots t_n'$ is admissible, there must be an index $j\in\{2,...,n\}$ such that $t_j'\in\{\xi_3^{+},\xi_3^-\}$, for otherwise, we would have $t_2'\cdots t_n'=\xi_1^-\xi_2^+\cdots \xi_1^-$ ending with $\xi_1^-$, which is a contradiction. 

    Now consider $j\in\{2,...,n\}$. If $t_j'=\xi_3^-$ for some $j$ even, then we replace $e_{k}''$ by $\iota(e_k'')$ for any $k$ such that $k<m+j-2$ even or $k>m+j-2$ odd, and get the piecewise geodesic
        \[
            \eta_0''=e_1''\cup e_2''\cup\cdots\cup e_{m-1}''\cup \iota(e_m'')\cup e_{m+1}''\cup\cdots\cup e_{j-1}''\cup e_{j}''\cup \iota(e_{j+1}'')\cup\cdots \cup \iota(e_{n-1}'')\cup e_n''.
        \] 
\begin{figure}[h] 
\labellist
\small\hair 2pt  
\pinlabel $e_1''$ at 31 400 
\pinlabel ${\color{darkorange}e_{m}}$ at 239 321
\pinlabel ${e_{m}''}$ at 250 370
\pinlabel $e_{m-1}''$ at 95 427
\pinlabel $e_{m-2}''$ at 135 330

\pinlabel $e_{m+1}''$ at 200 429
\pinlabel $e_{m+2}''$ at 190 327
\pinlabel $e_{m+3}''$ at 205 390
\pinlabel $e_{m+4}''$ at 135 284

\pinlabel $e_1''$ at 369 400
\pinlabel $e_{m-2}''$ at 457 330
\pinlabel $e_{m-1}''$ at 455 424
\pinlabel $\color{aogreen}i(e_{m}'')$ at 455 394
\pinlabel $e_{m+1}''$ at 551 430
\pinlabel $e_{m+2}''$ at 514 309
\pinlabel $\color{aogreen}i(e_{m+3}'')$ at 525 341
\pinlabel $e_{m+4}''$ at 451 291

\pinlabel $e_1''$ at 369 150
\pinlabel $e_{m-2}''$ at 514 100
\endlabellist
\centering 
\includegraphics[width=12cm]{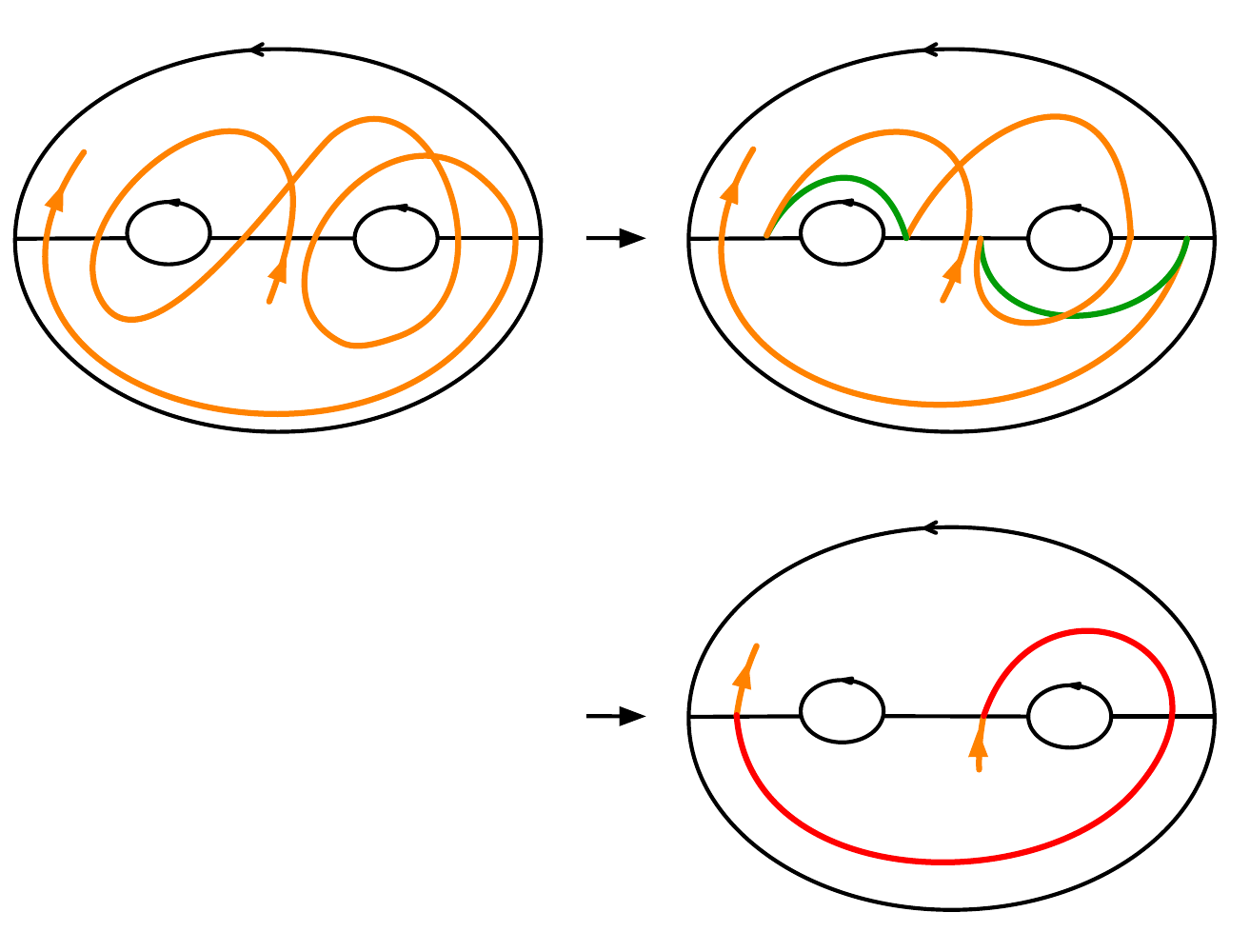}
\caption{Reducing lengths of geodesics via the involution of a pair of pants, II}
\end{figure} 
    
Notice that $e_{m-1}''\cup\iota(e_m'')\cup e_{m+1}''\cup\cdots\cup e_{j-1}''$ is a piecewise geodesic in $H_+$ connecting $\xi_2$ to $\xi_3$, and  $e_{j}''\cup \iota(e_{j+1}'')\cdots \cup \iota(e_{n-1}'')\cup e_n''$ is a piecewise geodesic in $H_-$ connecting $\xi_3$ to $\xi_1$. Hence, $\eta_0''$ is homotopic to $\eta$, but with the same length as $\eta''$. Hence, we have the
        \[
            \ell_X(\eta'')=\ell_X(\eta_0'')>\ell_X(\eta).
        \]

    If $t_j'=\xi_3^+$ for some $j$ odd, then we replace $e_{k}''$ by $\iota(e_k'')$ for any $k$ such that $k\le m+j-2$ even or $k\ge m+j-2$ even, and get the piecewise geodesic
        \[
            \eta_1''=e_1''\cup e_2''\cup\cdots\cup e_{m-1}''\cup \iota(e_m'')\cup e_{m+1}''\cup\cdots\cup \iota(e_{j-1}'')\cup \iota(e_{j}'')\cup e_{j+1}''\cdots \cup \iota(e_{n-1}'')\cup e_n''.
        \]
     Notice that $e_{m-1}''\cup \iota(e_m'')\cup e_{m+1}''\cup\cdots\cup \iota(e_{j-1}'')$ is a piecewise geodesic in $H_+$ connecting $\xi_2$ to $\xi_3$, and  $\iota(e_{j}'')\cup e_{j+1}''\cdots \cup \iota(e_{n-1}'')\cup e_n''$ is a piecewise geodesic in $H_-$ connecting $\xi_3$ to $\xi_1$. Hence, $\eta_1''$ is homotopic to $\eta$ but with the same length as $\eta''$. Hence, we have the
        \[
            \ell_X(\eta'')=\ell_X(\eta_1'')>\ell_X(\eta).
        \]

    The proof in the case when $t_{m-1}=\xi_3^+$ works in the same way.
\end{proof}
Now consider $u$ and $v$, two cyclically words of letters in $\{a^{\pm1}, b^{\pm1}\}$. Moreover, suppose that $u$ starts and ends with different letters. Let $[\eta]$ and $[\eta'']$ be the curves associated to $u$ and $uv$, respectively.

Notice that it is possible to have cancellations in $\theta(u)\theta(v)$, i.e., the last letter of $\theta(u)$ and the first letter of $\theta(v)$ are inverse to each other. This means that we have 
\begin{itemize}
	\item either the last letter of $u$ is $a$ and the first letter of $v$ is $b$;
	\item or the last letter of $u$ is $b^{-1}$ and the first letter of $v$ is $a^{-1}$.
\end{itemize}
In the first case, notice that to cancel $\xi_1^+$ from the left, we need $a^{-1}$ in front of $a$ in $u$ which is impossible. In the second case, then to cancel $\xi_3^+$ from the left, we need $b$ in front of $b^{-1}$ in $u$, which is impossible. Hence, $\theta(u)$ ends with either $\xi_1^+\xi_2^-$ or $\xi_3^+\xi_2^-$ in these two cases respectively. Similarly, we have $\theta(v)$ starting with either $\xi_2^+\xi_3^-$ or $\xi_2^+\xi_1^-$ in these two cases respectively. Therefore, there is only one cancellation.

We now classify all cases according to the starting and ending letters of $u$.

If $u$ starts and ends with $b$ and $a$ respectively, then by Lemma \ref{lem:techlemma}, we have $\theta(u)=twt'$ with $\{t,t'\}=\{\xi_i^+,\xi_i^-\}$ for some $i\in\{1,2,3\}$, and $w$ is a cyclically reduced and admissible word. Hence, $w$ is also associated to $[\eta]$, and $wt'\theta(v)t$ is associated to $[\eta'']$. By applying necessary cancellations to $t'\theta(v)t$, we get a reducible word $w'$. The discussion in the previous paragraph shows that there is no cancellation between $w$ and $w'$. By the proof of Lemma \ref{lem:techlemma}, since $uv$ is cyclically reduced, the last letter of $w'$ is either $t$ or the second last letter in $\theta(v)$. Hence, the word $ww'$ is cyclically reduced. Applying Proposition \ref{prop:nocancellation} to $w$ and $ww'$, we have that Theorem \ref{thm:orderinvariant} holds in this case. 

A similar discussion can be made for the case when $u$ starts and ends with $a^{-1}$ and $b^{-1}$ respectively, and Theorem \ref{thm:orderinvariant} holds for this case as well.

In all other cases, $\theta(u)$ is cyclically reduced. A similar discussion as in the proof of Lemma \ref{lem:techlemma} shows that
	\begin{itemize}
		\item either $\theta(u)=tw$ and $\theta(v)=w't'$ with $\{t,t'\}=\{\xi_i^+,\xi_i^-\}$ for some $i=\{1,2,3\}$, such  that the first letter of $w$ and the last letter of $w'$ are not inverse to each other;
		\item or the first letter of $\theta(u)$ and the last letter of $\theta(v)$ are not inverse to each other.
	\end{itemize}
For the first case, we consider the words $wt$ and $t'w'$. Notice that $wt$ is still cyclically reduced. On the other hand, $ww'$ is reduced. To see this, we may assume, without loss of generality, that $u$ starts with $b$ and $v$ ends with $a$. If $\theta(u)\theta(v)$ is not reduced, we have $u$ ends with $a$ or $b^{-1}$. Both cases are impossible since $\theta(u)$ is cyclically reduced and $u$ is cyclically reduced. Hence, $ww'=\theta(uv)$ is cyclically reduced. Moreover the last letter of $u$ cannot be $b$, since we require $u$ to start and end with different letters. Therefore $w$ cannot start and end with a same letter. Applying Proposition \ref{prop:onecancellation}, we prove Theorem \ref{thm:orderinvariant} in this case.

For the second case, we have $\theta(uv)$ is cyclically reduced. Applying Proposition \ref{prop:nocancellation} when $\theta(u)\theta(v)$ is reduced, or Proposition \ref{prop:onecancellation} when $\theta(u)\theta(v)$ is not reduced, we have Theorem \ref{thm:orderinvariant} in this case.

\subsection{Shortest closed geodesics on pairs of pants}
We will use Theorem \ref{thm:orderinvariant} to look for the shortest closed geodesic with given self-intersection number on a hyperbolic pair of pants.

We will keep the same notation as before. Let $\alpha_1$, $\alpha_2$ and $\alpha_3$ denote the three peripheral curves of $P$ with orientation induced by an orientation of $P$, and consider the presentation $\pi_1(P)=\langle a,b\rangle$ with $a$, $b$ and $c$ corresponding to, respectively,  $[\alpha_1]$, $[\alpha_2]$ and $[\alpha_3]$. Let $X$ be a hyperbolic metric on $P$. Assume now that the boundary curves $\alpha_1$, $\alpha_2$ and $\alpha_3$ are all geodesics, such that $\ell_X(\alpha_1)\le \ell_X(\alpha_2)\le \ell_X(\alpha_3)$. Again, $\xi_1$, $\xi_2$ and $\xi_3$ will be the three common perpendicular geodesics connecting the distinct boundary geodesics. By cutting along $\xi_1\cup\xi_2\cup\xi_3$, we cut all geodesics into geodesic segments connecting $\xi_i$ and $\xi_j$ with $1\le i\neq j\le 3$.

Let $k$ be a natural number, and $\mathcal{G}_{\ge k}$ be the set of all primitive closed geodesics in $P$ with at least $k$ self-intersections. Let $\eta\in\mathcal{G}_{\ge k}$ be such that
    \[
        \ell_X(\eta)=\min\{\ell_X(\eta')\mid \eta'\in \mathcal{G}_{\ge k} \}.
    \]
We are interested in the self-intersection number of $\eta$, and denote it by $i_{\ge k}$.

Baribaud studied closed geodesics on a hyperbolic pair of pants following a similar method in \cite{Baribaud}. Let $\eta_{n1}^{12}$ be the geodesic in $P$ associated to $[a^nb^{-1}]$. One of Baribaud's results \cite{Baribaud}, using our notation, can be phrased as:
\begin{theorem}
Consider a closed geodesic $\eta$ on $P$ corresponding to a cyclically reduced and admissible word of length at least $2(n+1)$ and made of letters in $\{\xi_1^\pm,\xi_2^\pm,\xi_3^\pm\}$. Then it satisfies $\ell_X(\eta_{n1}^{12})\le\ell_X(\eta)$.
\end{theorem}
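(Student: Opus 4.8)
The plan is to combine a monotonicity reduction with a geometric segment-by-segment comparison carried out in the two hexagons $H_+$ and $H_-$, in the spirit of Propositions \ref{prop:nocancellation} and \ref{prop:onecancellation}. First I would record that $\eta_{n1}^{12}=[a^nb^{-1}]$ is represented by the admissible word $\theta(a^nb^{-1})=(\xi_1^+\xi_2^-)^n\xi_3^+\xi_2^-$, which is cyclically reduced and has $\xi$-length exactly $2(n+1)$. The inequality for words of $\xi$-length strictly larger than $2(n+1)$ should then reduce to the equality case: since Propositions \ref{prop:nocancellation} and \ref{prop:onecancellation} show that $\ell_X$ strictly increases under the extension operations on cyclically reduced admissible words, any longer such word strictly dominates in $X$-length some cyclically reduced admissible word of $\xi$-length exactly $2(n+1)$ obtained by peeling off crossings two at a time. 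Thus it suffices to treat words of $\xi$-length exactly $2(n+1)$.

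For such a word I would cut the geodesic $\eta$ along $\xi_1\cup\xi_2\cup\xi_3$ into $2(n+1)$ segments, each lying in $H_+$ or $H_-$ and, by admissibility, joining two \emph{distinct} arcs among $\xi_1,\xi_2,\xi_3$. Classify the segments by the unordered pair of arcs they join, with counts $N_{12},N_{13},N_{23}$, so that $N_{12}+N_{13}+N_{23}=2(n+1)$. A segment meeting $\xi_3$ has exactly one endpoint on $\xi_3$, so $N_{13}+N_{23}=2c_3$, where $c_3$ is the number of crossings of $\xi_3$; in particular it is even. An essential non-peripheral geodesic must cross $\xi_3$ (otherwise it is confined to $P\setminus\xi_3$, an annulus with core $\alpha_1$, hence a power of the boundary $\alpha_1$), so $N_{13}+N_{23}\ge 2$ and therefore $N_{12}\le 2n$. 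This is exactly the segment profile realized by $\theta(a^nb^{-1})$, which uses the maximal admissible number $N_{12}=2n$ of segments straddling the shortest cuff $\alpha_1$, together with the minimal remainder $N_{23}=2$, $N_{13}=0$.

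The decisive and hardest step is to compare actual lengths rather than crossing numbers. Here I would invoke the orientation-reversing involution $\iota$ as in the proofs of Propositions \ref{prop:nocancellation} and \ref{prop:onecancellation}: reflecting the segments lying in $H_-$ by $\iota$ unfolds $\eta$ into a piecewise-geodesic path of the same $X$-length inside a single hexagon, with endpoints prescribed by the segment profile, and one argues that the unfolded tight spiral coming from $a^nb^{-1}$ is shortest among all paths realizing at least $2(n+1)$ crossings. The main obstacle is that the naive lower bound ``each $\xi_i\xi_j$ segment is at least the hexagon distance $d(\xi_i,\xi_j)$'' is too weak, since these distances cannot be attained simultaneously; the real content is to show that any departure from the spiral — exchanging an $\alpha_1$-straddling segment for a more expensive one, or spreading crossings away from the short cuff $\alpha_1$ — strictly increases the total length. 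I expect this to require a convexity/exchange argument on the unfolded path that pushes crossings toward $\alpha_1$, which is precisely the analysis carried out by Baribaud in \cite{Baribaud} and the step on which I would concentrate the effort.
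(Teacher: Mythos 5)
This statement is quoted in the paper as a result of Baribaud, with no proof supplied there, so your attempt cannot be matched against an in-paper argument; it has to stand on its own, and as written it does not. The decisive step --- that among all cyclically reduced admissible words of $\xi$-length exactly $2(n+1)$ the spiral word $\theta(a^nb^{-1})=(\xi_1^+\xi_2^-)^n\xi_3^+\xi_2^-$ has minimal $X$-length for \emph{every} metric $X$ --- is exactly the content of the theorem, and your proposal explicitly defers it to ``a convexity/exchange argument \ldots which is precisely the analysis carried out by Baribaud.'' Identifying where the difficulty lies is not the same as resolving it: the unfolding by the involution $\iota$ used in Propositions \ref{prop:nocancellation} and \ref{prop:onecancellation} only compares a word with a sub- or super-word of itself (it produces a competitor path homotopic to the shorter curve), whereas here you must compare two words of the \emph{same} length with different segment profiles $(N_{12},N_{13},N_{23})$, for which no such homotopy is available. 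Your own remark that the naive bound ``each segment is at least the orthogeodesic distance $d(\xi_i,\xi_j)$'' fails shows you are aware the exchange argument is the whole theorem; without it the proposal is a plan, not a proof.

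Two secondary points. First, your reduction from $\xi$-length $\ge 2(n+1)$ to $\xi$-length exactly $2(n+1)$ via ``peeling off'' is stated too quickly: Proposition \ref{prop:nocancellation} requires the truncated prefix $w$ to be itself cyclically reduced and admissible, and Proposition \ref{prop:onecancellation} additionally requires $t_1\neq t_{m-1}$; a prefix of a cyclically reduced admissible word need not satisfy these, so you must argue that some cyclic rotation does (this is fixable but not automatic). Second, your combinatorial bookkeeping ($N_{12}+N_{13}+N_{23}=2(n+1)$, $N_{13}+N_{23}\ge 2$, hence $N_{12}\le 2n$) is correct and is a sensible normal form, but by itself it only constrains crossing counts, not lengths, so it does not advance the length comparison.
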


For small values of $k$, to have $k$ self-intersections, the curve needs $2(k+1)$ letters in its cyclically reduced and admissible word of letters $\{\xi_1^\pm,\xi_2^\pm,\xi_3^\pm\}$. Therefore, from Baribaud's theorem, we obtain the following corollary.
\begin{corollary}
    For $k=1,2,3,4$, we have $i_{\ge k}=k$.
\end{corollary}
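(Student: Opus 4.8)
The plan is to deduce the corollary from Baribaud's theorem together with Theorem~\ref{thm:orderinvariant}. First I would establish the combinatorial fact that underlies the bound on word length: a closed geodesic on $P$ that self-intersects at least $k$ times must cross the arc system $\xi_1\cup\xi_2\cup\xi_3$ enough times that its cyclically reduced admissible word has length at least $2(k+1)$. The heuristic is that each self-intersection forces the curve to return past the arcs, and a curve with too few segments simply cannot accumulate $k$ crossings inside the two hexagons $H_+$ and $H_-$. Making this precise is where I expect most of the work to lie; for the small values $k=1,2,3,4$ one can verify it by a finite case analysis, checking that admissible words of length $2k$ or shorter realize curves of self-intersection number strictly less than $k$.

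Granting that, the argument proceeds as follows. Fix $X\in\T(P)$ and let $\eta$ be a length-minimizer in $\mathcal{G}_{\ge k}$, with $i_{\ge k}=i([\eta])$. By definition $i_{\ge k}\ge k$, so I must rule out $i_{\ge k}>k$. The candidate shortest curve in the class is $\eta_{n1}^{12}$ associated to $[a^n b^{-1}]$; one checks directly that for the appropriate $n$ this curve has self-intersection number exactly $k$ and admissible word of length $2(k+1)$. (For $k=1,2,3,4$ the relevant words $ab^{-1}$, $a^2b^{-1}$, $a^3b^{-1}$, $a^4b^{-1}$ realize self-intersection numbers $1,2,3,4$ respectively, which can be confirmed using the correspondence $\theta$ and the combinatorial description, or via the Despr\'e--Lazarus algorithm cited in the introduction.) Since $\eta_{n1}^{12}\in\mathcal{G}_{\ge k}$, minimality gives $\ell_X(\eta)\le \ell_X(\eta_{n1}^{12})$.

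Now suppose for contradiction that $i_{\ge k}>k$, say $i([\eta])\ge k+1$. Then $\eta$ corresponds to a cyclically reduced admissible word of length at least $2((k+1)+1)=2(k+2)>2(k+1)$, so in particular its word has length at least $2(n+1)$ in Baribaud's theorem with the matching $n$. Baribaud's theorem then yields $\ell_X(\eta_{n1}^{12})\le \ell_X(\eta)$. Combined with the minimality inequality this forces $\ell_X(\eta)=\ell_X(\eta_{n1}^{12})$; but $\eta_{n1}^{12}$ has only $k<i([\eta])$ self-intersections, so $\eta_{n1}^{12}\in\mathcal{G}_{\ge k}$ is a distinct minimizer of strictly smaller self-intersection number. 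Whether this is already a contradiction depends on uniqueness of the minimizer, so the clean way to close the loop is to invoke Theorem~\ref{thm:orderinvariant}: writing $\eta$'s word as a concatenation extending that of $\eta_{n1}^{12}$ (which is possible since $\eta$ is longer and shares the initial segment structure), the theorem gives the \emph{strict} inequality $\ell_X(\eta_{n1}^{12})<\ell_X(\eta)$, contradicting $\ell_X(\eta)\le\ell_X(\eta_{n1}^{12})$. Hence $i_{\ge k}=k$.

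The main obstacle is the combinatorial lower bound on word length in terms of self-intersection number, and more delicately the step of expressing $\eta$'s word as a genuine concatenation $uv$ with $u$ corresponding to $\eta_{n1}^{12}$ and $u$ starting and ending with different letters, so that Theorem~\ref{thm:orderinvariant} applies to give strictness rather than the non-strict bound from Baribaud alone. For $k\le 4$ the finiteness of the relevant word lengths makes both points tractable by direct enumeration, which is presumably why the corollary is restricted to this range; pushing beyond $k=4$ would require controlling how self-intersection number grows with word length in general, which is exactly the content of the conjecture mentioned in the introduction.
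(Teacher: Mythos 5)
Your overall architecture (use $a^kb^{-1}$ as the benchmark and apply Baribaud's theorem to every competitor) matches the paper's starting point, but the combinatorial claim you lean on is false in exactly the range you need it, and this failure is precisely what the paper's actual proof is built to handle. You assert that a curve with at least $k$ self-intersections must have a cyclically reduced admissible word of length at least $2(k+1)$. This holds for $k=1,2$ but fails for $k=3$ and $k=4$: the paper's own lists exhibit $aba^{-1}b^{-1}$ with $3$ self-intersections and a $\theta$-word of length $6=2k$, and $aaba^{-1}b^{-1}$ with $4$ self-intersections and a word of length $8=2k$. Worse for your contradiction step, $aaba^{-1}b$ has $5$ self-intersections and a word of length $8<2(4+1)$, so for $k=4$ there is a curve with \emph{strictly more} than $k$ self-intersections to which Baribaud's theorem with $n=4$ simply does not apply; your argument gives no control over its length and cannot exclude it as the minimizer. (The prefatory sentence in the paper makes the same overstatement, but the proof that actually carries the corollary is the subsequent enumeration: one lists the finite set $\mathcal{S}_{\ge k}$ of curves with at least $k$ self-intersections and at most $2k$ letters, observes that for $k\le 3$ all of them have exactly $k$ self-intersections, and for $k=4$ disposes of the three $5$-self-intersection members by an explicit smoothing argument, e.g.\ $\ell_X(aaba^{-1}b)>\ell_X(aaba^{-1}b^{-1})$, with Baribaud's theorem covering only the curves having at least $2(k+1)$ letters.)

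The tie-breaking step is also unjustified. To get strictness from Theorem \ref{thm:orderinvariant} you would need to write the competitor's cyclic word as a concatenation $uv$ with $u$ corresponding to $a^kb^{-1}$, and ``$\eta$ is longer and shares the initial segment structure'' is not a reason this is possible: $aaba^{-1}b$ does not contain $a^4b^{-1}$ as a subword in any cyclic rotation. So neither of the two obstacles you flag at the end can be overcome along the route you propose; for $k=3,4$ the enumeration-plus-smoothing argument is not an optional refinement but the substance of the proof.
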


As $k$ becomes larger, it is possible that a primitive closed geodesic induced by an admissible word with word length smaller than $2(k+1)$ has more than $k$ self-intersections. Combining this fact and Baribaud's result, to look for $i_{\ge k}$, we just need to compare the length of $\eta_{n1}^{12}$ with those of geodesics in $\mathcal{G}_{\ge k}$ induced by admissible words with less than $2(k+1)$ letters.

In order to find a procedure for computing this, we use an algorithm due to Despr\'{e} and Lazarus \cite{Despre-Lazarus}. In $\pi_1(P)$, any element can be written as a word of letters in $\{a^{\pm1},b^{\pm1}\}$ and Despr\'{e}'s and Lazarus' algorithm computes the self-intersection of the curve associated to a given word. 

The previous discussion on words of letters in $\{a^{\pm1},b^{\pm1}\}$ and words of letters in $\{\xi_1^\pm,\xi_2^\pm,\xi_3^\pm\}$ help us put this all together. Here is the algorithm for searching for all shortest geodesic candidates in $\mathcal{G}_{\ge k}$.

Given $k\in\N^\ast$, our first step is to list the word $aB^k$ and other words of letters in $\{a^{\pm1},b^{\pm1}\}$ with at least $k$ self-intersections that are made of most $2k$ strings, and denote this collection by $\mathcal{S}_{\ge k}$. The second step is to compare all words in $\mathcal{S}_{\ge k}$, and then obtain the geodesic with at least $k$ self-intersection realizing the minimal length of $\mathcal{G}_{\ge k}$.

Here are the lists of candidates for small $k$s:
\begin{itemize}
	\item For $k=1$, $\mathcal{S}_{\ge 1}=\{ab^{-1}\}$. The curve $ab^{-1}$ has $1$ self-intersection, so the shortest geodesic in $\mathcal{G}_{\ge 1}$ has $1$ self-intersection.
	\item For $k=2$, $\mathcal{S}_{\ge 2}=\{aab^{-1}\}$. The curve $aab^{-1}$ has $2$ self-intersections, so a shortest geodesic in $\mathcal{G}_{\ge 2}$ has $2$ self-intersections.
	\item For $k=3$, $\mathcal{S}_{\ge 3}=\{aaab^{-1}, aba^{-1}b^{-1} \}$. Both these curves have $3$ self-intersections, so a shortest geodesic in $\mathcal{G}_{\ge 3}$ has $3$ self-intersections. 
	\item For $k=4$, $\mathcal{S}_{\ge 4}$ consists of 10 curves and the maximum self-intersection number is 5.
		The set of curves in $\mathcal{S}_{\ge 4}$ with $4$ self-intersections is 
			\[
			\{aaaab^{-1}, aaba^{-1}b^{-1}, aab^{-1}a^{-1}b, abba^{-1}b^{-1}, aba^{-1}b^{-1}b^{-1}, ababa^{-1}b^{-1}, abab^{-1}a^{-1}b\}.
			\]
			The set of curves in $\mathcal{S}_{\ge 4}$ with $5$ self-intersections is 
			\[
			\{aaba^{-1}b,abbab^{-1},abab^{-1}a^{-1}b^{-1}\}.
			\] 
			By a smoothing argument, we can see that, in terms of  hyperbolic lengths, three curves with $5$ self-intersections are all longer than those with $4$ self-intersections. More precisely,
			\[
			 \begin{aligned}
			         \ell_X(aaba^{-1}b)>\ell_X(aaba^{-1}b^{-1}),\\ 
        			\ell_X(abbab^{-1}) > \ell_X(aaba^{-1}b^{-1}), \\
        			\ell_X(abab^{-1}a^{-1}b^{-1}) > \ell_X(abab^{-1}a^{-1}b).
			 \end{aligned}
			\] 

	\end{itemize}
Our code that produces these lists is available at \href{https://github.com/hanhv/small-k-systoles}{https://github.com/hanhv/small-k-systoles}. 
From this, we can see that a shortest geodesic in $\mathcal{G}_{\ge 4}$ has $4$ self-intersections.

\medskip

For $k\ge5$, the cardinality of $\mathcal{S}_{\ge k}$ and the maximum self-intersection number of curves in this set  become larger, which makes it much more difficult to compare their hyperbolic lengths on a case by case basis. For example, for $k=5$, $\mathcal{S}_{\ge 5}$ consists of 66 curves and the maximum self-intersection number is 7, whereas for $k = 6$, $\mathcal{S}_{\ge 6}$ consists of 299 curves and the maximum self-intersection number is $11$.
\begin{figure}[h]
				\labellist
				\small\hair 2pt 
				\pinlabel {$aaba^{-1}b$} at 16 5
				\pinlabel {$aaba^{-1}b^{-1}$} at 160 7
				\pinlabel {$a$} at 40 49
				\pinlabel {$b$} at 95 49
				\pinlabel {$a$} at 182 50
				\pinlabel {$b$} at 236 50
				\endlabellist
				\centering \includegraphics[width=12cm]{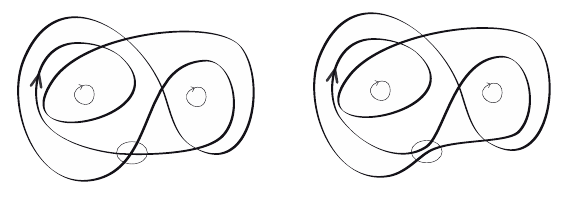}
				\caption{$\ell_X(aaba^{-1}b) > \ell_X(aaba^{-1}b^{-1})$}
				\label{abba^{-1}b}
			\end{figure}   
			
			\begin{figure}[h]
				\labellist
				\small\hair 2pt 
				\pinlabel {$abbab^{-1}$} at 18 14
				\pinlabel {$aaba^{-1}b^{-1}$} at 160 14
				\pinlabel {$a$} at 40 49
				\pinlabel {$b$} at 95 49
				\pinlabel {$a$} at 182 50
				\pinlabel {$b$} at 236 50
				\endlabellist
				\centering \includegraphics[width=12cm]{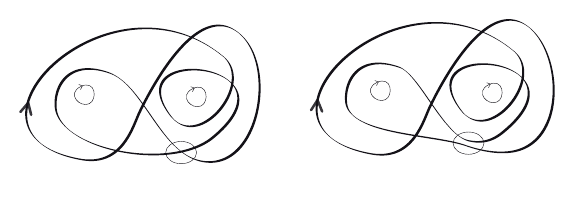}
				\caption{$\ell_X(abbab^{-1}) > \ell_X(aaba^{-1}b^{-1})$}
				\label{abbab^{-1}}
			\end{figure} 
			\begin{figure}[h]
				\labellist
				\small\hair 2pt 
				\pinlabel {$abab^{-1}a^{-1}b^{-1}$} at 21 10
				\pinlabel {$abab^{-1}a^{-1}b$} at 160 10
				\pinlabel {$a$} at 40 49
				\pinlabel {$b$} at 95 49
				\pinlabel {$a$} at 177 45
				\pinlabel {$b$} at 231 45
				\endlabellist
				\centering \includegraphics[width=12cm]{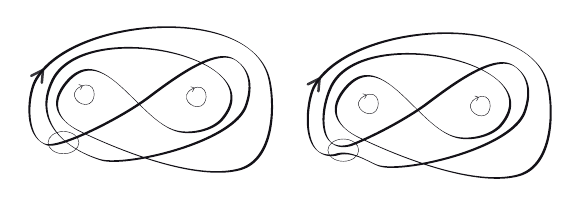}
				\caption{$\ell_X(abab^{-1}a^{-1}b^{-1}) > \ell_X(abab^{-1}a^{-1}b)$}
				\label{abab^{-1}a^{-1}b^{-1}}
\end{figure} 

\begin{remark}
One should notice that, when $k$ is large, we can produce $k$ self-intersection number with $k'$ geodesic segments connecting $\xi_1$, $\xi_2$ and $\xi_3$ in $P$, for $k'<2(k+1)$. By results of the second author \cite{MR4460231}, there are examples where the geodesic realizing the minimal length $\mathcal{G}_{\ge k}$ are completely different for different metrics. Roughly speaking, when the metric is close to a pair of pants with three cusps, for large $k$, the minimum is given by a curve associated to the element $ab^{-k}$. On the other hand, when the boundary lengths are big, the curve with fewer number of geodesic segments in the complement of $\xi_1\cup\xi_2\cup\xi_3$ will have shorter length.
\end{remark}

\bibliographystyle{plain} 
\bibliography{references}

	{\it Addresses:}\\
	Department of Mathematics, University of Fribourg, Fribourg, Switzerland\\
    Department of Mathematics, Purdue University, Indiana, USA\\
	School of Mathematical Sciences, Nankai University, Tianjin, China\\
	{\it Emails:}\\
	hugo.parlier@unifr.ch\\ 
    hanhmfa@gmail.com\\
    binbin.xu.topo@outlook.com

\end{document}